\def\ps@pprintTitle{%
    \let\@oddhead\@empty
    \let\@evenhead\@empty
    \def\@oddfoot
    {\hbox to \textwidth%
        {\ifnopreprintline\relax\else
                \@myfooterfont%
                \ifx\@elsarticlemyfooteralign\@elsarticlemyfooteraligncenter%
                    \hfil\@elsarticlemyfooter\hfil%
                \else%
                    \ifx\@elsarticlemyfooteralign\@elsarticlemyfooteralignleft%
                        \@elsarticlemyfooter\hfill{}%
                    \else%
                        \ifx\@elsarticlemyfooteralign\@elsarticlemyfooteralignright%
                            {}\hfill\@elsarticlemyfooter%
                        \else%
                            Preprint submitted to \ifx\@journal\@empty%
                                Elsevier%
                            \else\@journal\fi\hfill\@date\fi%
                    \fi%
                \fi%
            \fi%
        }%
    }%
    \let\@evenfoot\@oddfoot}
\journal{(Journal Name)}
\theoremstyle{definition}
\newtheorem*{definition*}{Definition}
\newtheorem*{example*}{Example}
\newtheorem{remark}{Remark}[section]
\newtheorem*{remark*}{Remark}
\theoremstyle{plain}
\newtheorem{theorem}{Theorem}[section]
\newtheorem{lemma}[theorem]{Lemma}
\newtheorem{proposition}[theorem]{Proposition}
\newtheorem*{axiom*}{Axiom}
\newtheorem*{exercise*}{Exercise}
\newtheorem*{lemma*}{Lemma}
\newtheorem*{proposition*}{Proposition}
\newtheorem*{theorem*}{Theorem}
\newtheorem*{corollary*}{Corollary}
\numberwithin{equation}{section}
\numberwithin{algorithm}{section}
\newcounter{eq_count}
\newcounter{subeq_count}
\let\OldSubequations\subequations
\renewcommand{\subequations}{
    \setcounter{subeq_count}{0}
    \OldSubequations
}
\newcounter{thm_count}
\newcounter{fig_count}
\newcounter{tab_count}
\newcounter{alg_count}
\let\OldSection\section
\renewcommand{\section}{
    \setcounter{eq_count}{0}
    \setcounter{thm_count}{0}
    \setcounter{fig_count}{0}
    \setcounter{tab_count}{0}
    \setcounter{alg_count}{0}
    \OldSection
}
\newcommand{\half}{\frac{1}{2}}
\newcommand{\vr}{\mathbf{x}}
\newcommand{\vOmega}{\mathbf{\Omega}}
\newcommand{\vx}{\mathbf{x}}
\newcommand{\mfx}{{\mathbf{X}}}
\newcommand{\dd}{{\text{d}}}
\newcommand{\pdrv}[2]{\frac{\partial{#1}}{\partial{#2}}}
\newcommand{\restr}[2]{\left.\kern-\nulldelimiterspace #1 \vphantom{\big|} \right|_{#2}}
\newcommand{\qc}{\text{,}\quad}
\begin{document}

\begin{frontmatter}

    \title{An efficient asymptotic preserving Monte Carlo
method for frequency-dependent radiative transfer equations}
     \author[1]{Yiyang Hong}
     \ead{hongyy23@stu.xmu.edu.cn}

      \author[2]{Yi Shi}
      \ead{shiyi@sdu.edu.cn}

      \author[1]{Yi Cai}
      \ead{yicaim@stu.xmu.edu.cn}

     \author[3]{Tao Xiong\corref{cor1}}
     \ead{taoxiong@ustc.edu.cn}

     \address[1]{School of Mathematical Sciences, Xiamen University, Xiamen, Fujian 361005, PR China}

      \address[2]{School of Mathematics, Shandong University, Jinan, Shandong 250100, PR China}
    
     \address[3]{School of Mathematical Sciences, University of Science and Technology of China, Hefei, Anhui 230026, PR China}

      \cortext[cor1]{Corresponding author.}

    \begin{abstract}
        In this paper, we develop an efficient asymptotic-preserving (AP) Monte Carlo (MC) method for frequency-dependent radiative transfer equations (RTEs), which is based on the AP MC method proposed for the gray RTEs in \cite{shi2023efficient}. We follow the characteristics-based approach by Zhang et al. \cite{zhang2023asymptotic} to get a reformulated model, which couples a low dimension convection-diffusion-type equation for the macroscopic quantities with a high dimension transport equation for the radiative intensity. To recover the correct free streaming limit due to frequency-dependency, we propose a correction to the reformulated macroscopic equation. The macroscopic system is solved using a hybrid method: convective fluxes are handled by a particle-based MC method, while diffusive fluxes are treated implicitly with central difference. 
        To address the nonlinear coupling across multiple frequency groups, we adopt a Picard iteration with a predictor-corrector procedure, which decouples a global nonlinear system into a \textit{space-only linear}  system with \textit{spatially decoupled scalar nonlinear}  equations. Once the macroscopic update is done, the transport equation is efficiently solved via a MC  method using the implicitly defined but known emission source. This approach enables larger time steps independent of the speed of light, significantly enhancing computational efficiency, especially for frequency-dependent RTEs. Formal AP analysis in the diffusive scaling is established.  Numerical experiments are performed to demonstrate the high efficiency and AP property of the proposed method.


    \end{abstract}

    \begin{keyword}
         Radiative transfer equations; Asymptotic preserving; Monte Carlo; Frequency-dependent; Multi-group.
    \end{keyword}

\end{frontmatter}


\section{Introduction}
\label{sec:introduction}
The radiative transfer equations (RTEs) are fundamental in modeling photon transport and interactions with matter in high-energy-density systems, such as astrophysics, inertial confinement fusion (ICF) and high-temperature flows. Their solution is challenging due to the high dimensionality, strong nonlinear coupling, and multiscale behavior in both space and time. It has attracted a lot of attention for numerical studies because of its importance but high complexity.

A popular strategy for simulating the RTEs is the deterministic method, which includes the spherical harmonic (also known as the $P_N$) and the discrete ordinate (also known as the $S_N$) methods. In the $P_N$ approximation \cite{pomraning2005equations,reed1972spherical,mcclarren2008solutions,mcclarren2010robust}, the radiation intensity is decomposed into a series of angular moments to arrive at a finite hyperbolic system. In the $S_N$ approximation \cite{pomraning2005equations,reed1972spherical,adams1997subcell,sun2015asymptoticB,xiong2022high}, the intensity is divided into certain selected directions, and a system of coupled discrete equations should be solved.  However, each method has its limitations \cite{brunner2002forms}: the $S_N$ method suffers from \textit{ray effects}, while the $P_N$ method is prone to \textit{wave effects} in time-dependent problems. 

An alternative approach is the stochastic method, such as the Monte Carlo (MC) method. MC methods allow for continuous treatment of phase-space variables during particle tracking. Despite inherent statistical fluctuations, they are highly flexible, especially for complex geometries, and are free from ray effects or wave effects. The implicit Monte Carlo (IMC)  method \cite{fleck1971implicit} is an important stochastic approach for solving RTEs. 
 It treats absorption and emission semi-implicitly via effective scattering, improving stability, and allowing for larger time steps compared to fully explicit MC methods. However, in optically thick regime with short photon mean free paths, particles undergo many collisions due to effective scattering, resulting in high computational cost and reduced efficiency.  Efforts have been made to improve the efficiency of the IMC method in optically thick regimes, such as the random walk approach \cite{fleck1984random,giorla1987random,keady2017improved:}, discrete diffusion Monte Carlo (DDMC) methods 
 \cite{densmore2007hybrid,densmore2012hybrid:},
 and implicit Monte Carlo diffusion (IMD) methods \cite{cleveland2010extension,gentile2001implicit,cleveland2014mitigating}. The DDMC methods and IMD methods are transport-diffusion hybrid methods which simulate the RTEs with a diffusion approximation in optically thick regimes and the standard IMC method in other regimes. Special efforts need to be made for domain decomposition and information exchange at transport-diffusion interfaces \cite{densmore2006interface}.

Gray radiative transfer equations (GRTEs) are a simplified form of RTEs, where the opacity depends solely on the material type or material temperature. This enables spatial partitioning into optically thick and thin regimes, allowing for the use of different numerical methods in each. In frequency-dependent radiative transfer equations (FRTEs), opacity typically decreases with frequency, making a regime optically thick at low frequencies but thin at high ones. This frequency dependence presents significant mathematical and numerical challenges, particularly in consistently coupling diffusion-like and free-streaming behaviors across the entire computational domain. 
One effective approach to address these challenges is the moment-based acceleration scheme known as the High-Order/Low-Order (HOLO) algorithm \cite{chacon2017multiscale,yee2017stable,park2019multigroup,bolding2017high,ZHANG2023112368,feng2025decomposed}. In the HOLO algorithm, a LO system - comprising the first two moments of the RTEs coupled with the material temperature equation is solved first.  The solution from this LO system is then used to determine the Planckian emission source term, which is subsequently employed to solve the HO equations. The benefit of HOLO algorithms lies in their ability to eliminate nonlinearity, however, ensuring long-term accuracy and nonlinear stability requires maintaining discrete consistency between the HO and LO formulations \cite{chacon2017multiscale}.
Another efficient approach to deal with these difficulties is the asymptotic preserving (AP) scheme. 
This approach has its origin in capturing steady-state solution for neutron transport in the diffusive regime \cite{larsen1987asymptotic,larsen1989asymptotic}, and later applied to non-stationary transport problems \cite{klar1998asymptotic}. The basic idea behind AP schemes is to ensure that the numerical method accurately captures the asymptotic limit of the mathematical model at the discrete level \cite{jin2022asymptotic}, avoiding dealing with the transport-diffusion interfaces in the domain decomposition method. There are various ways to construct an AP scheme for RTEs, including micro-macro decomposition \cite{shi2023efficient,xiong2022high}, unified gas kinetic (UGK) method \cite{sun2015asymptoticB,shi2020asymptotic,sun2015asymptoticA,li2020unified,liu2023implicit,yang2025unified,xu2021positive,xu2024spatial}, three-state update method \cite{tang2021accurate}, AP-HOLO \cite{ZHANG2023112368,feng2025decomposed,park2020toward}, linear-discontinuous spatial differencing scheme \cite{morel1996linear}, etc and some of them have been extended to FRTEs.

In this work, we aim to develop an efficient AP MC method for FRTEs. For the frequency variable, we employ the classical multi-group approach \cite{pomraning2005equations}. In this approach, the frequency variable is discrezed into a finite number of groups and the frequency integration is only performed over the groups. Building on the idea originally proposed in the UGK scheme \cite{xu2010unified, mieussens2013asymptotic}, we substitute, for each group, an integral solution of the microscopic transport equation to construct the flux for the corresponding macroscopic equation.  Unlike UGK-type methods, which numerically approximate the macroscopic radiative flux based on information from neighboring cells, we adopt a characteristics-based approach \cite{shi2023efficient,zhang2023asymptotic,cai2024asymptotic}, wherein we further perform integration by parts and suitably approximate the resulting integrals along the characteristic lines to close the macroscopic flux. This approximation can be formally shown to be of order $O(\Delta t^2)$ for linear kinetic model \cite{zhang2023asymptotic,cai2024asymptotic}. The resulting macroscopic equation incorporates long-range characteristic tracing, which enables the use of larger time steps independent of the speed of light. To address model approximation errors in the free-streaming regime, we introduce a correction to the original model. We then adopt a hybrid finite volume method for solving the macroscopic system: the convective flux is computed using a particle-based MC method, while the diffusive flux is handled implicitly with central difference.
The new challenge for FRTEs is the nonlinear coupling between the group-wise radiative intensity and the group-wise Planck function, which results in a high-dimensional implicit system that involves not only spatial variables but also the frequency (group) dimension.
 Instead of employing the linearized iterative solver from \cite{sun2015asymptoticB,sun2015asymptoticA}, as used in our previous work \cite{shi2023efficient}, we adopt a Picard iteration  with the predictor-corrector procedure proposed in \cite{tang2021accurate} to efficiently manage this group-wise nonlinearity.
 The solution of the global nonlinear system is divided into two stages: a predictor step that solves a space-only linear system, followed by a corrector step that solves  scalar nonlinear equations locally within each spatial cell. This predictor-corrector process is iterated until convergence within each Picard iteration.
 Notably, in our scheme, the linear system involves a matrix whose size depends only on the number of spatial cells, allowing for efficient computation.
 Once the macroscopic equation is solved, the updated material temperature provides an a priori estimate for the Planckian emission source. This reduces the transport equation to a purely absorbing problem with a known source, which can be directly solved using MC method. To mitigate the so-called \textit{teleportation error} \cite{fleck1984random}, we extend the continuous source tilting technique from \cite{shi2020continuous} to FRTEs, preserving the correct equilibrium diffusion limit. Formal AP analysis in the diffusive scaling is established.
 Numerical experiments demonstrate that the proposed method is significantly more efficient than the IMC method \cite{fleck1971implicit}, particularly in optically thick regime.

The rest of the paper is organized as follows. In Section \ref{sec:model}, we revisit the model equations and derive the approximation model in the general multi-dimensional case. The numerical methods for the approximation model are described in Section \ref{sec:method}. Formal asymptotic analysis are given in Section \ref{sec:analysis}. We provide some numerical tests in Section \ref{sec:results}, followed by a conclusion remark in Section \ref{sec:conclusion}.

\section{Model approximation and reformulation}
\label{sec:model}
The frequency-dependent radiative transfer equations in the absence of material motion, scattering, heat conduction, and internal sources can be written as \cite{larsen2013properties}
\begin{subequations} \label{eq_RTE}
    \begin{align}
        \label{eq:simplified_transfer_intensity}
		&\frac{1}{c} \frac{\partial I}{\partial t}+\vOmega \cdot \nabla I=\sigma (B -I), 
             \\ 
		&\pdrv{u_m}{t}=\int_0^{\infty} \int_{4 \pi} \sigma(I-B) \dd{\vOmega}\dd{\nu},
    \end{align}
\end{subequations}
with the prescribed initial conditions
\begin{subequations}
    \begin{align}
        &I(\vx,\vOmega,\nu,0) = I^{i}(\vx,\vOmega,\nu),
        \\
        &T(\vx,0) = T^{i}(\vx),
    \end{align}
\end{subequations}
and the prescribed inflow boundary conditions
\begin{equation}
    I(\vx,\vOmega,\nu,t)\big|_{\vx \in \partial V} = I_{bc}(\vx,\vOmega,\nu,t),\quad \vOmega\cdot \mathbf{n} < 0.
\end{equation}
This system describes the radiative transfer and energy exchange between the radiation and the material. Here, $\vx \in V \subset \mathbb{R}^{3}$ is the spatial variable on a specified physical domain $V$,  
$\vOmega \in \mathbb{S}^2$ is the angular variable on the unit sphere $\mathbb{S}^2$,  
$\nu \in \mathbb{R}^{+}$ is the frequency variable,  
$t \in \mathbb{R}^{+}$ is the temporal variable,  
$\mathbf{n}$ is the unit outward normal vector on $\partial V$,  
$u_m(T)$ is the material energy density,  
$\sigma(\vx, \nu, T)$ is the opacity of the material, and  
$c$ is the speed of light.
  The two main unknowns are:
\begin{equation*}
    \begin{aligned}
        &I(\vx,\vOmega,\nu,t) =\text { the radiation intensity, }
        \\
        & T(\vx,t) = \text{the material temperature}.
    \end{aligned}
\end{equation*}
The Planck function $B(\nu,T) $ is defined as
    \begin{equation}\label{eq_def_PlanckFunc}
		B(\nu, T)=\frac{2 h \nu^3}{c^2} \frac{1}{e^{h \nu / k T}-1},
    \end{equation}
with the Boltzmann’s constant $k$  and the Planck’s constant $h$. The function $B(\nu,T) $ satisfies
\begin{equation}
    \int_0^{\infty} B(\nu,T) \dd{\nu} = \frac{1}{4\pi}acT^4,
\end{equation}
where $a = \frac{8 \pi^5 k^4}{15h^3c^3}$ is the radiation constant. The normalized Planck function $b(\nu,T)$
is defined by 
    \begin{equation}\label{eq_def_b}
		\begin{aligned}
			b(\nu, T)  =\frac{B(\nu, T)}{\int_0^{\infty} B(\nu, T) \dd{\nu}} = \frac{4 \pi}{ac T^4}B(\nu, T) ,
		\end{aligned}
    \end{equation}
thus we have $\int_0^{\infty} b(\nu,T) \dd{\nu} = 1$. The material energy density $u_m $ is related to the material temperature $T$ through the following equation of state
\begin{equation*}
    \pdrv{u_m}{T} = C_v(\vx,T),
\end{equation*}
where $C_v$ is the heat capacity of material. For simplicity, in this work, it is assumed  that $C_v$ is independent of the material temperature $T$, and we use the relation 
\begin{equation*}
    \pdrv{u_m}{t} = C_v \pdrv{T}{t},
\end{equation*}
in our subsequent discussion.

The spatial variable $\vx$ is usually presented by the Cartesian coordinate with $\vx = (x,y,z)$, while the angular variable $\vOmega $ is given by the spherical coordinates $(\theta,\varphi)$ with polar angle $\theta\in[0,\pi]$ and azimuthal angle $\varphi\in[0,2\pi]$, then
\begin{equation*}
\vOmega = (\xi,\eta,\mu), \quad \mu= - \cos \theta, \quad \xi=\sin \theta \cos \varphi, \quad \eta=\sin \theta \sin \varphi,
\end{equation*}
and
\begin{equation*}
    \dd{\vx} = \dd{x}\dd{y}\dd{z} \qc \dd{\vOmega} = \sin\theta\dd{\theta}\dd{\varphi} = \dd{\mu}\dd{\varphi}.
\end{equation*}
In the one-dimensional (1D) case, \eqref{eq_RTE} reduces to
\begin{equation} \label{eq_1DRTE}
    \begin{aligned}
		&\frac{1}{c} \frac{\partial I}{\partial t}+\mu \pdrv{I}{x}=\sigma ( B -I), 
             \\ 
		&C_v \pdrv{T}{t}= 2 \pi \int_0^{\infty}  \int_{-1}^{1} \sigma(I- B) \dd{\mu}  \dd{\nu},
    \end{aligned}
\end{equation}
with $\mu\in[-1,1]$, note that here we use $x$ to replace $z$ for convenience. While in the two-dimensional (2D) case, it becomes
\begin{equation} \label{eq_2DRTE}
    \begin{aligned}
		&\frac{1}{c} \frac{\partial I}{\partial t}+\xi \pdrv{I}{x} + \eta \pdrv{I}{y}=\sigma (B -I), 
             \\ 
		&C_v \pdrv{T}{t}= \int_0^{\infty} \int_{0}^{2 \pi} \int_{-1}^{1}  \sigma(I-B) \dd{\mu} \dd{\varphi } \dd{\nu},
    \end{aligned}
\end{equation}
with
\begin{equation*}
\xi=\sqrt{1-\mu^2} \cos \varphi \in[-1,1], \quad \eta=\sqrt{1-\mu^2} \sin \varphi \in[-1,1], \quad \mu \in[-1,1], \quad \varphi \in[0,2 \pi] .
\end{equation*}
\subsection{The Multi-group method}
Using the multi-group method, the continuous frequency space  $(0,\infty)$ is divided into $G$ groups, where all photons within a given group are treated with a single representative frequency, assigning an averaged opacity. The frequency interval is denoted by $(\nu_{g-\frac{1}{2}}, \nu_{g+\frac{1}{2}})$ for $g = 1,\dots,G$, with $\nu_{\frac{1}{2}}=0$ and $\nu_{G+\frac{1}{2}}=\infty$. In practice, a cutoff frequency of $\nu_{\frac{1}{2}}$ and $\nu_{G+\frac{1} {2}}$ is usually taken, which will be specified in the numerical tests. For each $g=1,\ldots,G$, we define the group-wise radiation intensity $I_g$ and Planck function $B_g$ as the integral over the frequency interval $(\nu_{g-\frac{1}{2}}, \nu_{g+\frac{1}{2}})$:
 \begin{subequations}
     \begin{align}
         &I_g(\vx, \vOmega, t) := \int_{\nu_{g-\half}}^{\nu_{g+\half}}I(\vx, \vOmega, \nu, t)\dd{\nu}, \\
         \label{eq_def_B_g}
         &B_g(T) := \int_{\nu_{g-\half}}^{\nu_{g+\half}} B(\nu,T) \dd{\nu}.
     \end{align}
 \end{subequations}
 With this definition, we can rewrite \eqref{eq_RTE}  as,
     \begin{subequations}\label{eq_microscopicSystem}
		\begin{align}
            &\begin{aligned}\label{eq_microscopicSystem_A}
                \frac{1}{c} \frac{\partial I_g}{\partial t}+\vOmega \cdot \nabla I_g = \sigma_{g}(B_g-I_g),  \quad g=1,\ldots,G, 
            \end{aligned}
            \\
		  &\begin{aligned}\label{eq_microscopicSystem_B}
                C_v \pdrv{T}{t}=\sum_{g=1}^{G} \int_{4 \pi} \sigma_g(I_g-B_g) \dd{\vOmega},
            \end{aligned}
		\end{align}
   \end{subequations} 
where a piecewise constant approximation is adopted for the group-wise opacity $\sigma_g$:
 \begin{equation}\label{eq_Approximation_sigma}
     \sigma_{g}(\vx,T) :=\frac{\int_{\nu_{g-\half}}^{\nu_{g+\half}} \sigma(\vx,\nu,T) (B-I)  \dd{\nu}}{\int_{\nu_{g-\half}}^{\nu_{g+\half}} (B-I) \dd{\nu}}\approx \frac{\int_{\nu_{g-\half}}^{\nu_{g+\half}} \sigma(\vx,\nu,T)  \dd{\nu}}{\nu_{g+\half}-\nu_{g-\half}}.
 \end{equation}
 For a detailed discussion on the approximation for $\sigma_g$, we refer readers to \cite{ pomraning2005equations,ZHANG2023112368}. The corresponding group-wise initial and boundary condition are given by:
   \begin{equation} 
        \begin{aligned}
               &I_{g}(\vx, \vOmega, 0) = I_{g}^i(\vx, \vOmega) :=  \int_{\nu_{g-\half}}^{\nu_{g+\half}} I^{i}(\vx,\vOmega,\nu) \dd{\nu}, \\[8pt]
               &I_{g}(\vx, \vOmega, t)\big|_{\vx \in \partial V} = I_{bc,g}(\vx, \vOmega, t):=  \int_{\nu_{g-\half}}^{\nu_{g+\half}}  I_{bc}(\vx,\vOmega,\nu,t) \dd{\nu}, \quad \vOmega \cdot \mathbf{n} < 0.
        \end{aligned}
    \end{equation}

\subsection{A semi-Lagrangian approximation} \label{Sec_SL}
Now, we  operate on  \eqref{eq_microscopicSystem_A} with  $\int_{4 \pi} (\cdot) \dd{\vOmega}$, the  system for macroscopic variables can be expressed as:
    \begin{subequations} \label{eq_macroscopic_system}
		\begin{align}
        \label{eq_macroscopic_system_A}
			&\frac{1}{c} \frac{\partial \rho_g}{\partial t}+\nabla \cdot \int_{4 \pi} \vOmega I_g \, \dd{\vOmega} = \sigma_{g} ( 4 \pi B_g-\rho_g ) , \quad  g=1, \ldots , G, \\  
			&C_v \pdrv{T}{t}=\sum_{g=1}^{G} \sigma_{g} \left( \rho_g- 4 \pi B_g \right),
		\end{align}
    \end{subequations} 
    where $\rho_g := \int_{4 \pi} I_g \dd{\vOmega} $.
    
The primary objective of this subsection is to utilize the  integral form of the microscopic equation \eqref{eq_microscopicSystem_A} with suitable approximation to close the flux $\int_{4 \pi} \vOmega I_g \, \mathrm{d}\vOmega$ in the macroscopic equation \eqref{eq_macroscopic_system_A}. This idea originally comes from the UGK scheme \cite{xu2010unified,mieussens2013asymptotic}. Whereas UGK scheme reconstructs the emission source term using a local linear polynomial derived from neighboring cells, our approach directly obtains the macroscopic flux through integration by parts of the time integral, combined with a semi-Lagrangian approximation \cite{shi2023efficient,zhang2023asymptotic,cai2024asymptotic}. This approach enables the use of a larger time step, which can be independent of the speed of light.


For each $g=1,\ldots,G$, we rewrite the microscopic equation \eqref{eq_microscopicSystem_A} into the following characteristic form
    \begin{subequations}
        \begin{align}\label{eq_characteristic_form}
            \frac{\dd{I_g}}{\dd{t}} &= c \sigma_g ( B_g - I_g),
            \\
            \frac{\dd{\vx}}{\dd{t}} &= c \vOmega,
        \end{align}
    \end{subequations}
with $\dd{} / \dd{t} $ being a material derivative.  
Without loss of generality, we formally impose a time interval $[t^n,t^{n+1}]$.
 Starting from $(\vx,t)$ with $t \in [t^n,t^{n+1}]$, a backward tracing of the characteristic line is given by:
    \begin{equation}\label{eq_def_characteristic_line}
        \mfx(s,\vOmega;\vx,t)=\vx-c \vOmega (t-s), \quad \forall \, s \in [t^n,t].
    \end{equation}
For simplicity, we denote $\mfx(s)$ as shorthand for $\mfx(s,\vOmega;\vx,t)$.  By multiplying \eqref{eq_characteristic_form} with an exponential factor $e^{c\sigma_g s  }$ and integrating over $s \in [t^n,t]$, we obtain:
    \begin{equation}
		\begin{aligned}\label{eq_formal_solution}
			I_g( \vx, \vOmega,t)= & e^{- c  \sigma_g (t - t^n)} I_{g}( \mfx(t^n), \vOmega, t^n ) 
            \\ 
			& +\int_{t^n}^{t} e^{- c  \sigma_g (t - s )} c\sigma_{g} B_g ( \mfx(s),  s )\dd{s}.
		\end{aligned}
    \end{equation}
Integrating by parts for the second term in \eqref{eq_formal_solution}, we get
    \begin{equation}\label{eq_formal_solution_integrated}
        \begin{aligned}
           \int_{t^n}^{t} e^{- c  \sigma_g (t - s )}  c\sigma_{g} B_g ( \mfx(s),  s )\dd{s}
           =   \Bigg (  B_g(\vx,t) &  - e^{- c  \sigma_g (t - t^n )}
            B_g( \mfx(t^n),t^n )
            \\
            & - \int_{t^n}^t e^{- c  \sigma_g (t - s )} \frac{\dd{B_g}}{\dd{s }}( \mfx(s ),s ) \dd{s } \Bigg ),
        \end{aligned}
    \end{equation}
    where the material derivative is given by
    \begin{equation*}
        \frac{\dd{B_g}}{\dd{s }}( \mfx(s ),s ) = \pdrv{B_g}{s}( \mfx(s ),s )  + c \vOmega \cdot \nabla B_g ( \mfx(s ),s )  .
    \end{equation*}
For the time integral in \eqref{eq_formal_solution_integrated},  we follow the idea in \cite{shi2023efficient,zhang2023asymptotic,cai2024asymptotic} to  give the following approximation,
    \begin{equation} \label{eq_lastTerm2}
        \begin{aligned}
             \int_{t^n}^t e^{- c  \sigma_g (t - s )} \frac{\dd{B_g}}{\dd{s }}( \mfx(s ),s ) \dd{s } 
            &\approx     \int_{t^n}^t e^{- c \sigma_g (t - s  )     } \dd{s} \left( 
 \frac{\dd{B_g}}{\dd{s}}(\mfx(s),s)  \Bigg|_{s = t} \right) 
                        \\
            &= \frac{1}{c\sigma_g} (1 -  e^{- c \sigma_g (t - t^{n} ) } ) \left( \pdrv{B_g}{s}(\vx,t) +  c \vOmega \cdot \nabla B_g(\vx,t)  \right) .
        \end{aligned}
    \end{equation}
Together, we deduce
     \begin{equation}\label{eq_formal_solution_approximated}
		\begin{aligned}
		  I_g( \vx, \vOmega,t)
            \approx & \,e^{- c  \sigma_g (t - t^n )} I_{g}( \mfx(t^n), \vOmega, t^n ) 
            \\
            & + \Bigg( B_g(\vx,t)  - e^{- c  \sigma_g (t - t^n )}
            B_g( \mfx(t^n),t^n )  \\ 
			& \quad  -  \frac{1}{c\sigma_g} (1 -  e^{- c \sigma_g (t - t^{n} ) } )  \left( \pdrv{B_g}{s}(\vx,t) +  c \vOmega \cdot \nabla B_g(\vx,t)  \right)   \Bigg).
		\end{aligned}
    \end{equation}
     By substituting \eqref{eq_formal_solution_approximated} into the macroscopic equation \eqref{eq_macroscopic_system_A} to close the flux
$\int_{4\pi} \vOmega I_g \,\mathrm{d}\vOmega$, 
and using the definition of the characteristic line \eqref{eq_def_characteristic_line} (which indicates $\mfx(t^n)$ is angular-dependent), as well as the integrals 
$\int_{4\pi} \vOmega \cdot \mathbf{v} \,\mathrm{d}\vOmega  = 0$ and $\int_{4\pi} \vOmega (\vOmega \cdot \mathbf{v}) \,\mathrm{d}\vOmega = \frac{4\pi}{3}\mathbf{v} $ for any variable  $\mathbf{v} $ that is angular-independent, 
we obtain 
\begin{subequations} \label{eq_approximaed_nonModifed}
		\begin{align}
			\frac{1}{c} \frac{\partial \rho_g}{\partial t}&+\nabla \cdot\int_{4 \pi}  e^{- c  \sigma_g (t - t^n )} \vOmega   \left( I_{g}  - B_{g} \right) ( \mfx(t^n), \vOmega,t^n ) \dd{\vOmega} 
            \\ \nonumber
		  & - \nabla \cdot \left( \frac{4 \pi }{3\sigma_{g}}\left(1-e^{-c \sigma_{g}\left(t-t^n\right)}\right) \nabla B_g \right)=  \sigma_{g} ( 4 \pi B_g-\rho_g ),  \quad g=1,\ldots,G, 
            \\  
		  C_v \pdrv{T}{t}&=\sum_{g=1}^{G} \sigma_{g} \left( \rho_g- 4 \pi  B_g\right) .
		\end{align}
    \end{subequations} 
    
\begin{remark}
    The approximation error in \eqref{eq_lastTerm2} has been proved to be $O(\Delta t^2)$ for linear transport equation in \cite{zhang2023asymptotic,cai2024asymptotic}.
\end{remark}

\begin{remark}
Our proposed model differs from UGK-type models for FRTEs \cite{sun2015asymptoticB,li2024unified,yang2025unified}. The transport process is decomposed into two components: convection and diffusion. Specifically, the convection term involves backward tracing the microscopic perturbation $\left( I_g - B_g \right)$ along the characteristic line, while the diffusion term is obtained by approximating $\nabla B_g$ along the same characteristic line. This approach reveals that the approximation of the flux $\int_{4\pi} \vOmega I_g \,\mathrm{d}\vOmega$ is not solely a function of neighboring cells, thereby eliminating the time step constraint imposed by the explicit CFL condition in the following numerical schemes.

\end{remark}

    \subsection{Model correction for the semi-Lagrangian approximation}
    When the radiative transfer equation reduces to the gray case, the approximated macroscopic system becomes identical to the formulation presented in our previous work \cite{shi2023efficient}, whose effectiveness has been validated through numerical experiments.  However, it is important to note that the diffusion coefficient, given by $\frac{1}{3 \sigma_g}(1-e^{-c\sigma_g (t-t^n)})$, approaches $\frac{c(t-t^n)}{3}$ rather than $0$ as $\sigma_g \rightarrow 0$. This indicates that, unlike the decomposed HOLO scheme \cite{feng2025decomposed}, the proposed model does not formally recover the free-streaming limit. While this discrepancy is generally acceptable in purely free-streaming regime (where $\nabla B_g \approx 0$), it becomes problematic at optically thick-thin interface and because of the frequency-dependent nature of the problem. To address this limitation, we introduce a model correction for our original model in the following subsection, to make it applicable also in the free streaming regime.

 We introduce $\theta_g \in [0,1]$ to give a convex combination of the time integral in \eqref{eq_formal_solution}:
    \begin{equation}\label{eq_formal_solution2}
		\begin{aligned}
			I_g( \vx, \vOmega,t)= & e^{- c  \sigma_g (t - t^n )} I_{g}( \mfx(t^n), \vOmega, t^n ) 
            \\ 
			& +\theta_g \int_{t^n}^{t} e^{- c  \sigma_g (t - s )} c\sigma_{g} B_g ( \mfx(s),  s )\dd{s}
            \\
            & +(1-\theta_g)\int_{t^n}^{t} e^{- c  \sigma_g (t - s )} c\sigma_{g} B_g ( \mfx(s),  s )\dd{s},
		\end{aligned}
    \end{equation}
     where $\theta_g$ denotes a local varying weight function associated with the opacity $\sigma_g$, characterizing the optical thickness for each group $g$. 
To capture the asymptotic behavior, we aim to keep
\begin{itemize}
    \item $\theta_g \to 1$ in the free-streaming regime (relatively small $\sigma_g$),
    \item $\theta_g \to 0$ in the diffusive regime (relatively large $\sigma_g$).
\end{itemize}
We achieve this dual asymptotic requirement through the exponential definition $\theta_g(\sigma_g,t) = e^{-c \sigma_{g} (t-t^n)}$ or $\theta_g(\sigma_g,t)= 1 - e^{-1 / (c \sigma_{g} (t-t^n))}$. Numerical experiments indicate that the proposed scheme is not sensitive to the choice of the weight functions. See Section \ref{weightFunction_compared} for further details. In this work, we take $\theta_g(\sigma_g,t) = e^{-c \sigma_{g} (t-t^n)}$.

    For the $ \theta_g $ term in \eqref{eq_formal_solution2}, we assume that this part represents a less stiff source, which contributes minimally in the diffusive regime. As a result, the time integral is directly approximated by
 \begin{equation}
		\begin{aligned}\label{eq_approximation2}
      \theta_g \int_{t^n}^{t} e^{- c  \sigma_g (t - s )} c\sigma_{g} B_g ( \mfx(s),  s )\dd{s} 
        & \approx  \theta_g \int_{t^n}^t c \sigma_{g} e^{-c \sigma_{g}(t-s)}    \dd{s} \left(  B_g  ( \mfx(s),  s )  \Big|_{s = t} \right)
        \\ 
		& = \theta_g ( 1 - e^{-c \sigma_{g}(t-t^n)}) B_g    \left( \vx,  t\right).
		\end{aligned}
	\end{equation}
On the other hand, for the $ 1 - \theta_g $ term in equation \eqref{eq_formal_solution2}, we assume it corresponds to the stiff source, and we apply the approximation from the last subsection:
   \begin{equation}
        \begin{aligned}
           (1-\theta_g)\int_{t^n}^t e^{- c  \sigma_g (t - s )} c\sigma_{g} B_g ( \mfx(s),  s ) \dd{s}&\approx  (1-\theta_g)  \Bigg (  B_g(\vx,t) - e^{- c  \sigma_g (t - t^n )}  
            B_g( \mfx(t^n),t^n )
            \\
            & - \frac{1}{c\sigma_g} (1 -  e^{- c \sigma_g (t - t^{n} ) } ) \left( \pdrv{B_g}{s}(\vx,t) +  c \vOmega \cdot \nabla B_g(\vx,t)  \right) \Bigg).
        \end{aligned}
    \end{equation}
Together, we have 
    \begin{equation}\label{eq_formal_solution_approximated2}
		\begin{aligned}
		  I_g( \vx, \vOmega,t)
            \approx & \, e^{- c  \sigma_g (t - t^n)} I_{g}( \mathbf{r}(t^n), \vOmega,t^n) 
            \\ 
		  & +  \theta_g     ( 1 - e^{-c \sigma_{g}(t-t^n)}) B_g  \left( \vx,  t\right) 
            \\
            & + ( 1 - \theta_g  )  \Bigg( B_g \left( \vx,  t\right)-e^{- c  \sigma_g (t - t^n)} B_g \left( \mathbf{X}(t^n),  t^n\right)  \\ 
			& \quad  -  \frac{1}{c \sigma_g} (1 - e^{-c \sigma_{g}(t-t^n)}) \left( \pdrv{B_g}{s}(\vx,t) +  c \vOmega \cdot \nabla B_g(\vx,t)  \right)   \Bigg).
		\end{aligned}
    \end{equation}
    Similarly,  substituting \eqref{eq_formal_solution_approximated2} into the macroscopic equation \eqref{eq_macroscopic_system_A} to close the flux
$\int_{4\pi} \vOmega I_g \,\mathrm{d}\vOmega$, it yields
    \begin{subequations}    \label{eq_macroscopic_system_approximated}
		\begin{align}
			\frac{1}{c} \frac{\partial \rho_g}{\partial t}&+\nabla \cdot\int_{4 \pi}  e^{- c  \sigma_g (t - t^n)} \vOmega   I_{g} ( \mfx(t^n), \vOmega,t^n ) \dd{\vOmega} 
            \\ \nonumber
            & - \nabla \cdot \left( ( 1 - \theta_g  ) \int_{4 \pi}  e^{- c  \sigma_g (t - t^n)} \vOmega  B_g  \left( \mfx(t^n),t^n \right) \dd{\vOmega} \right)
            \\ \nonumber
		  & - \nabla \cdot \left( ( 1 - \theta_g  ) \frac{4 \pi }{3\sigma_{g}}\left(1-e^{-c \sigma_{g}\left(t-t^n\right)}\right) \nabla B_g \right)=  \sigma_{g} ( 4 \pi B_g-\rho_g ),  \quad g=1,\ldots,G, 
            \\  
		  C_v \pdrv{T}{t}&=\sum_{g=1}^{G} \sigma_{g} \left( \rho_g- 4 \pi  B_g\right) .
		\end{align}
    \end{subequations} 

    \subsection{Reformulation}
    
We introduce the following notations to reformulate \eqref{eq_macroscopic_system_approximated}:
    \begin{subequations}
          \begin{align}
              &b_g(T) := \int_{\nu_{g-\half}}^{\nu_{g+\half}} b(\nu,T) \dd{\nu},
              \\
              &\phi(T)  := acT^4.
          \end{align}
    \end{subequations}
From the definition of the normalized Planck function in \eqref{eq_def_b}, together with the definition of the group-wise Planck function in \eqref{eq_def_B_g}, we derive the following relationship:
\begin{equation} \label{eq_def_phig}
        4 \pi B_g = b_g \phi.
\end{equation}
Applying the chain rule yields
\begin{equation}\label{eq_def_dBdT}
    \begin{aligned}
            4 \pi \nabla B_g 
            &=4 \pi \pdrv{B_g}{T} \nabla T 
            \\
            &=(b_g + \frac{T}{4} \pdrv{b_g}{T})4acT^3 \nabla T 
            \\
            &=(b_g + \frac{T}{4} \pdrv{b_g}{T}) \nabla \phi.
    \end{aligned}
\end{equation}
Therefore, we can reformulate the macroscopic system \eqref{eq_macroscopic_system_approximated} as:
    \begin{subequations}
    \label{eq_macroscopic_system_approximated_expressed}
		\begin{align}
			\frac{1}{c} \frac{\partial \rho_g}{\partial t}&+\nabla \cdot\int_{4 \pi}  e^{- c  \sigma_g (t - t^n)} \vOmega   I_{g} ( \mfx(t^n), \vOmega,t^n ) \dd{\vOmega} 
            \\ \nonumber
            & - \nabla \cdot \left ( ( 1 - \theta_g  ) \int_{4 \pi}  e^{- c  \sigma_g (t - t^n)} \vOmega  B_g \left( \mfx(t^n),t^n \right) \dd{\vOmega} \right )
            \\ \nonumber \label{eq_energyBalance}
		  & - \nabla \cdot \left ( ( 1 - \theta_g  ) \frac{1 }{3\sigma_{g}}\left(1-e^{-c \sigma_{g}\left(t-t^n\right)}\right) (b_g + \frac{T}{4} \pdrv{b_g}{T}) \nabla \phi \right ) =  \sigma_{g} ( b_g  \phi - \rho_g ),  \quad g=1,\ldots,G, 
            \\  
		  C_v \pdrv{T}{t}&=\sum_{g=1}^{G} \sigma_{g} \left( \rho_g- b_g  \phi \right) .
		\end{align}
    \end{subequations} 

    \begin{remark} \label{rk_reformulation}
        We use the notation $b_g \phi$ and $(b_g + \frac{T}{4} \pdrv{b_g}{T}) \nabla \phi$ to replace $4 \pi B_g$ and $4 \pi \nabla B_g$, respectively. This formulation offers an advantage: it enables our numerical implementation to seamlessly transition between frequency-dependent and gray radiative transfer equations. To verify this, observing that in the gray case $G =1$, the coefficients simplify to be $b_g = 1$ and $(b_g + \frac{T}{4} \pdrv{b_g}{T}) = 1$ for all $g=1,\ldots,G$. 
    \end{remark}

\section{Numerical method}
\label{sec:method}
In this section, we present the numerical method for solving the coupled macro-micro system given by equations \eqref{eq_microscopicSystem} and \eqref{eq_macroscopic_system_approximated_expressed}. We begin by discretizing \eqref{eq_macroscopic_system_approximated_expressed} using a hybrid finite volume method, where the convective flux  is provided by a MC method and the diffusive flux discretized implicitly with central difference. To handle the macroscopic system's nonlinearity that couples space and frequency dimensions, we employ a Picard iteration combined with a predictor-corrector approach, which decouples the system into space-only linear equations and cell-local scalar nonlinear equations.  The resulting material temperature $T^{n+1}$ provides  a \textit{priori} estimate for the emission source $B_g^{n+1}$, thereby reducing \eqref{eq_microscopicSystem_A} to a purely absorbing radiative transport problem with a known source. This problem can be efficiently solved using a MC method.

We partition the computational domain $V$ into $N_x$ cells $\{V_i\}_{i=1}^{N_x}$. 
Let $\partial V_i$ denote the boundary of the cell $V_i$, and let $S_{ij} = \partial V_i \cap \partial V_j$ denote the interface shared between the neighboring cells $V_i$ and $V_j$. 
We denote by $\Delta V_i$ and $|S_{ij} | $ the volume of $V_i$ and the area of $S_{ij}$, respectively. 
The set of indices of the neighboring cells of $V_i$ that share a face is denoted by $\mathcal{N}_i$. 
Let $\mathbf{n}_{ij}$ be the unit normal vector on $S_{ij}$, oriented from $V_i$ toward $V_j$. 
Finally, let $\mathbf{x}_i$ denote the barycenter (center of mass) of the cell $V_i$.

Considering a time interval $[0, T]$, we define the time step size $\Delta t = T / N_t$, where $N_t$ is a positive integer. 
We then set the discrete time levels by $t^n = n \Delta t$, for $n = 1, \ldots, N_t$.

\subsection{A finite volume method for the macroscopic system}
We integrate equation \eqref{eq_macroscopic_system_approximated_expressed} over a time interval $[t^n,t^{n+1}]$ and a space cell $V_i$ to obtain the following finite volume scheme:
\begin{subequations} \label{eq_fullDiscretization} 
		\begin{align} 
			&\frac{\rho_{g,i}^{n+1} - \rho_{g,i}^{n}}{c \Delta t} +  \frac{1}{\Delta V_i}  \sum_{j \in \mathcal{N}_i} \left( F_{g,ij}^{C,n+1} - F_{g,ij}^{D,n+1} \right) =  \sigma_{g,i}^{n+1}b_{g,i}^{n+1}  \phi_i^{n+1} -\sigma_{g,i}^{n+1}\rho_{g,i}^{n+1},  \quad g=1,\ldots,G, \label{eq_fullDiscretization_A} 
            \\  
		  &C_{v,i} \frac{T^{n+1}_i - T^n_i}{\Delta t}=\sum_{g=1}^{G} \sigma_{g,i}^{n+1} \left( \rho_{g,i}^{n+1}- b_{g,i}^{n+1}  \phi_i^{n+1} \right) .\label{eq_fullDiscretization_B} 
		\end{align}
    \end{subequations} 

The  convective flux $F_{g,ij}^{C,n+1}$ is defined as:
\begin{equation}          \label{eq_fulldis_fluxA}
        \begin{aligned}  
            &F_{g,ij}^{C,n+1} =  
            \Bar{F}_{g,ij}^{I,n+1} 
            - (1 - \theta_{g,i}^{n+1}) \Bar{F}_{g,ij}^{B,n+1,+}
            - (1 - \theta_{g,j}^{n+1}) \Bar{F}_{g,ij}^{B,n+1,-},
        \end{aligned}
    \end{equation}
     where the weight function  $\theta_{g,i}^{n+1}$  is  defined implicitly as 
    \begin{equation}
        \theta_{g,i}^{n+1} =  e^{-c \sigma_{g,i}^{n+1} \Delta t}. 
    \end{equation}
    The component fluxes are defined as follows:
    \begin{itemize}
        \item The term $\Bar{F}_{g,ij}^{I,n+1}$ denotes the surface flux contribution from the initial and boundary sources \eqref{eq_sample_I} and \eqref{eq_sample_bc}, defined as
        \begin{equation}
            \Bar{F}_{g,ij}^{I,n+1} = \frac{1}{\Delta t} 
            \int^{t^{n+1}}_{t^n}  
            \int_{S_{ij} }  
            \int_{4 \pi}  
           e^{- c  \sigma_g (t - t^n)}   
            I_{g} ( \mfx(t^n), \vOmega,t^n ) 
            \vOmega \cdot \mathbf{n}_{ij} 
            \dd{\vOmega} \dd{S}  \dd{t}.
        \end{equation}
            \item The term $\Bar{F}_{g,ij}^{B,n+1,+}$  denotes the outflow surface flux contribution from the ghost initial and boundary sources \eqref{eq_sample_B} and \eqref{eq_sample_B_bc}, defined as
            \begin{equation}
            \Bar{F}_{g,ij}^{B,n+1,+}= 
            \frac{1}{\Delta t} 
            \int^{t^{n+1}}_{t^n}   
            \int_{S_{ij} }  
            \int_{\vOmega \cdot \mathbf{n}_{ij} > 0 }
            e^{- c  \sigma_g (t - t^n)}  B_{g} ( \mfx(t^n),t^n )  \vOmega \cdot \mathbf{n}_{ij} 
            \dd{\vOmega} \dd{S} \dd{t}.
        \end{equation}
            \item The term $\Bar{F}_{g,ij}^{B,n+1,-}$ denotes the inflow surface flux contribution from the ghost initial and boundary sources \eqref{eq_sample_B} and \eqref{eq_sample_B_bc}, defined as
                        \begin{equation}
            \Bar{F}_{g,ij}^{B,n+1,-}= 
            \frac{1}{\Delta t} 
            \int^{t^{n+1}}_{t^n}   
            \int_{S_{ij} }  
            \int_{\vOmega \cdot \mathbf{n}_{ij} < 0  }
            e^{- c  \sigma_g (t - t^n)}  B_{g} ( \mfx(t^n),t^n )  \vOmega \cdot \mathbf{n}_{ij} 
            \dd{\vOmega} \dd{S} \dd{t}.
        \end{equation}
    \end{itemize}
    All these integrals are evaluated using a MC method, with details provided in \eqref{eq_def_fluxA} and \eqref{eq_def_fluxB}. 

    The diffusive flux $F^{D,n+1}_{g,ij}$ is discretized as:
    \begin{equation} \label{eq_def_diffusiveFLux}
        F^{D,n+1}_{g,ij} = D_{g,ij}^{n+1}\frac{\phi_{j}^{n+1} - \phi_i^{n+1}}{ |\vr_j - \vr_i| }|S_{ij}|,
    \end{equation}
    where the diffusion coefficient $D_{g,ij}^{n+1}$ is given by:
    \begin{equation}          \label{eq_fulldis_fluxB}
        \begin{aligned}  
            D_{g,ij}^{n+1} =&  ( 1 - \theta_{g,ij}^{n+1}  ) \frac{1 }{3\sigma_{g,ij}^{n+1}}(1-e^{-c \sigma_{g,ij}^{n+1}\Delta t}) (b_g + \frac{T}{4} \pdrv{b_g}{T})^{n+1}_{ij}.
        \end{aligned}
    \end{equation}
The interface value $\theta_{g,ij}^{n+1}$ is approximated by the arithmetic average:
\begin{equation}
    \theta_{g,ij}^{n+1} = \half(\theta_{g,i}^{n+1} + \theta_{g,j}^{n+1}   ).
\end{equation}
 The interface value $\sigma_{g,ij}$ is evaluated using the harmonic average:
    \begin{equation}\label{eq_eva_sigma} 
        \sigma_{g,ij}^{n+1} = \frac{2 \sigma_{g,i}^{n+1} \sigma_{g,j}^{n+1}}{\sigma_{g,i}^{n+1}  + \sigma_{g,j}^{n+1} }.
    \end{equation}
Finally, the quantity $(b_g + \frac{T}{4} \pdrv{b_g}{T})^{n+1}_{ij}$ is computed  based on the interface temperature 
 $T^{n+1}_{ij}$, which is determined by
    \begin{equation} \label{eq_eva_T}
        T^{n+1}_{ij} = \left(\frac{(T^{n+1}_{i})^4 + (T^{n+1}_{j})^4}{2}  \right)^{\frac{1}{4}} .
    \end{equation}
    \begin{remark}
        For nonuniform cells, it is recommended to employ cell size-based weighting in the evaluation of \eqref{eq_eva_sigma} and \eqref{eq_eva_T}. This strategy is used in the numerical tests.
    \end{remark}

    \begin{remark}
     We use a MC method to compute the convective flux, which, unlike the explicit deterministic methods, is not subject to the CFL time step constraint.  In addition, the diffusion term is derived by approximating $\nabla B_g$ along the characteristic line and is discretized implicitly, thereby avoiding both the CFL and the parabolic time step restriction.  These features reflect the long-range characteristic tracing inherent in our model,  allowing for the use of a large time step that is independent of the speed of light.
    \end{remark}
 \subsubsection{Picard iteration with a predictor-corrector procedure}
     Directly solving the scheme \eqref{eq_fullDiscretization} would require handling a fully coupled nonlinear system, where the coupling spans both spatial and frequency dimensions, leading to prohibitively high computational costs.  To circumvent this complexity, we adopt a Picard iteration in combination with a \textit{predictor-corrector} procedure, following the approach proposed in \cite{xiong2022high,tang2021accurate}. 
     
     To start with, we reformulate the system \eqref{eq_fullDiscretization}. From \eqref{eq_fullDiscretization_A}, we have 
     \begin{equation} \label{eq_rho_g}
     	\rho_{g,i}^{n+1}=\frac{\frac{1}{c \Delta t}\rho_{g,i}^n+ \sigma_{g,i}^{n+1} b_{g,i}^{n+1} \phi_i^{n+1} -   \frac{1}{\Delta V_i} \sum_{j \in \mathcal{N}_i} 
     		\left( F_{g,ij}^{C,n+1} -  F_{g,ij}^{D,n+1} \right)}{\frac{1}{c \Delta t}+ \sigma_{g,i}^{n+1}}.
     \end{equation}
     By substituting \eqref{eq_rho_g} into the equation  \eqref{eq_fullDiscretization_B} to eliminate $\rho_{g,i}^{n+1}$, we obtain
     \begin{equation} \label{eq_correction_A}
     	C_{v,i} \frac{T^{n+1}_i - T^n_i}{\Delta t}=\sum_{g=1}^{G} \chi_{g,i}^{n+1} \left( \frac{1}{c \Delta t}
     	\left(\rho_{g,i}^n -  b_{g,i}^{n+1} \phi_i^{n+1}  \right)  -   \frac{1}{\Delta V_i} \sum_{j \in \mathcal{N}_i} 
     	\left( F_{g,ij}^{C,n+1} -  F_{g,ij}^{D,n+1} \right)\right),
     \end{equation}
     where $\chi_{g,i}^{n+1} := \frac{\sigma_{g,i}^{n+1}}{\frac{1}{c \Delta t} + \sigma_{g,i}^{n+1}}$. Multiplying  \eqref{eq_energyBalance} by $4acT^3$ yields
     \begin{equation*}
     	C_v \pdrv{\phi}{t}=4acT^3\sum_{g=1}^{G} \sigma_{g}  \left(\rho_g- b_g  \phi \right) .
     \end{equation*} 
     Let $\beta := \frac{4acT^3}{C_v}$, this equation can be discretized implicitly as
     \begin{equation}\label{eq_energeyBalance_linear}
     	\begin{aligned}
     		\frac{ \phi_{i}^{n+1} - \phi_{i}^n }{\beta_{i}^{n+1} \Delta t} 
     		&=  \sum_{g=1}^{G} \sigma_{g,i}^{n+1} \left(  \rho_{g,i}^{n+1} 
     		-  b_{g,i}^{n+1} \phi_i^{n+1} \right).
     	\end{aligned}
     \end{equation} 
          By substituting \eqref{eq_rho_g} into the equation  \eqref{eq_energeyBalance_linear} to eliminate $\rho_{g,i}^{n+1}$, we obtain
        \begin{equation} \label{eq_prediction_A}
     	 \frac{\phi^{n+1}_i - \phi^n_i}{ \beta_i^{n+1} \Delta t}=\sum_{g=1}^{G} \chi_{g,i}^{n+1} \left( \frac{1}{c \Delta t}\left(\rho_{g,i}^n -  b_{g,i}^{n+1} \phi_i^{n+1}  \right)  -   \frac{1}{\Delta V_i} \sum_{j \in \mathcal{N}_i} 
     	\left( F_{g,ij}^{C,n+1} -  F_{g,ij}^{D,n+1} \right)\right) .
     \end{equation}

    Now the Picard iteration for solving \eqref{eq_fullDiscretization}  is defined as follows: for the iterative number $k$ starting at $k =0$, where $b_g^{n+1,0} $ , $\sigma_g^{n+1,0} $, $\chi_g^{n+1,0} $ and $D_g^{n+1,0} $ can be computed by taking the value of $T^{n+1,0} = T^n$, we update the unknowns $T^{n+1,k+1}$ iteratively by the following step:

    	 \textbf{The prediction step.} 
    	Using \eqref{eq_prediction_A}, we first solve the following  \textit{linear} system:
    	\begin{equation} \label{eq_prediction_B}
    		\frac{\phi^{n+1,k+\half}_i - \phi^n_i}{ \beta_i^{n+1,k} \Delta t}=\sum_{g=1}^{G} \chi_{g,i}^{n+1,k} \left( \frac{1}{c \Delta t} \left(\rho_{g,i}^n -  b_{g,i}^{n+1,k} \phi_i^{n+1,k+\half}  \right)
    		-   \frac{1}{\Delta V_i} \sum_{j \in \mathcal{N}_i} 
    		F_{g,ij}^{n+1,k+\half} \right) ,
    	\end{equation}
    	where 
    	\begin{equation*}
    		F_{g,ij}^{n+1,k+\half} = F_{g,ij}^{C,n+1,k} -  F_{g,ij}^{D,n+1,k+\half},
    	\end{equation*}
    	and
    	\begin{subequations}
    		\begin{align*}
    			& F_{g,ij}^{C,n+1,k} = \Bar{F}_{g,ij}^{I,n+1} 
    			- (1 - \theta_{g,i}^{n+1,k}) \Bar{F}_{g,ij}^{B,n+1,+}
    			- (1 - \theta_{g,j}^{n+1,k}) \Bar{F}_{g,ij}^{B,n+1,-},
    			\\
    			&F_{g,ij}^{D,n+1,k+\half} = D_{g,ij}^{n+1,k} \frac{\phi_{j}^{n+1,k+\half} - \phi_i^{n+1,k+\half}}{ |\vr_j - \vr_i| }|S_{ij}|,
    			\\
    			&D_{g,ij}^{n+1,k} =  ( 1 - \theta_{g,ij}^{n+1,k}  ) \frac{1 }{3\sigma_{g,ij}^{n+1,k}}(1-e^{-c \sigma_{g,ij}^{n+1,k}\Delta t}) (b_g + \frac{T}{4} \pdrv{b_g}{T})_{ij}^{n+1,k}.
    		\end{align*}
    	\end{subequations}
    	Notice that $b_{g,i}^{n+1,k}$ and $\left(b_g + \frac{T}{4} \frac{\partial b_g}{\partial T}\right)_{ij}^{n+1,k}$ are evaluated using the $k$th iteration values. This approach eliminates the need to solve a linear system that couples space and frequency dimensions. 
    	
    	To see how this forms a \textit{linear} system, we reformulate \eqref{eq_prediction_B} as follows:
    	\begin{equation}\label{eq_prediction}
    		\begin{aligned}
    			\Big(\frac{1}{\beta_i^{n+1,k} \Delta t} 
    			&+ \frac{1}{c \Delta t} \sum_{g=1}^G \chi_{g,i}^{n+1, k} b_{g,i}^{n+1, k}    \Big)\phi_i^{n+1,k+\frac{1}{2}}
    			\\
    			&-   \frac{1}{\Delta V_i} \sum_{g=1}^G \chi_{g,i}^{n+1, k} \left(  \sum_{j \in \mathcal{N}_i}   D_{g,ij}^{n+1,k}\frac{\phi_{j}^{n+1,k+\half} - \phi_i^{n+1,k+\half}}{ |\vr_j - \vr_i| }|S_{ij}|    \right)= \text{RHS}_i,
    		\end{aligned}
    	\end{equation}
    	where 
    	\begin{equation*}
    		\text{RHS}_i = \frac{1}{\beta_i^{n+1,k} \Delta t}\phi_i^n +  
    		\sum_{g=1}^G \chi_{g,i}^{n+1, k}  \left(\frac{1}{c \Delta t}\rho_{g,i}^n- \frac{1}{\Delta V_i} \sum_{j \in \mathcal{N}_i} F_{g,ij}^{C,n+1,k} \right).
    	\end{equation*}
    	Then at each iterative step, given $T_i^{n+1,k}$, equation \eqref{eq_prediction}  is a \textit{space-only linear} system with respect to $\phi_i^{n+1,k+\frac{1}{2}}$. A standard linear solver used for solving the Poisson equation can be applied to this linear system.
    	
    	\textbf{The correction step.}
    	With $\phi_i^{n+1,k+\frac{1}{2}}$ obtained from the prediction step, we compute the corresponding temperature:
    	\begin{equation*}
    		T_i^{n+1,k+\frac{1}{2}} = \left( \frac{\phi_i^{n+1,k+\frac{1}{2}}}{ac} \right)^{\frac{1}{4}},
    	\end{equation*}
    	which allows us to update the temperature-dependent terms $\sigma_{g,i}^{n+1,k+\frac{1}{2}}$, $\chi_{g,i}^{n+1,k+\frac{1}{2}}$, and $D_{g,ij}^{n+1,k+\frac{1}{2}}$. Using \eqref{eq_correction_A}, we solve the following \textit{scalar} nonlinear system:
    	\begin{equation} \label{eq_correction_B}
    		C_{v,i} \frac{T^{n+1,k+1}_i - T^n_i}{\Delta t}=\sum_{g=1}^{G} \chi_{g,i}^{n+1,k+\half} \left( \frac{1}{c \Delta t}\left(\rho_{g,i}^n -  b_{g,i}^{n+1,k+1} \phi_i^{n+1,k+1}  \right)  -   \frac{1}{\Delta V_i} \sum_{j \in \mathcal{N}_i} 
    		F_{g,ij}^{n+1,k+\half*} \right) .
    	\end{equation}
    	where 
    	\begin{equation*}
    		F_{g,ij}^{n+1,k+\half*} = F_{g,ij}^{C,n+1,k+\half} -  F_{g,ij}^{D,n+1,k+\half*},
    	\end{equation*}
    	and
    	\begin{subequations}
    		\begin{align*}
    			&F_{g,ij}^{C,n+1,k+\half} = \Bar{F}_{g,ij}^{I,n+1} 
    			- (1 - \theta_{g,i}^{n+1,k+\half}) \Bar{F}_{g,ij}^{B,n+1,+}
    			- (1 - \theta_{g,j}^{n+1,k+\half}) \Bar{F}_{g,ij}^{B,n+1,-},
    			\\
    			&F_{g,ij}^{D,n+1,k+\half*} = D_{g,ij}^{n+1,k+\half} \frac{\phi_{j}^{n+1,k+\half} - \phi_i^{n+1,k+\half}}{ |\vr_j - \vr_i| }|S_{ij}|,
    			\\
    			&D_{g,ij}^{n+1,k+\half} =  ( 1 - \theta_{g,ij}^{n+1,k+\half}  ) \frac{1 }{3\sigma_{g,ij}^{n+1,k+\half}}(1-e^{-c \sigma_{g,ij}^{n+1,k+\half}\Delta t}) (b_g + \frac{T}{4} \pdrv{b_g}{T})_{ij}^{n+1,k+\half}.
    		\end{align*}
    	\end{subequations}
    	Notice that we take $\left( b_g + \frac{T}{4} \frac{\partial b_g}{\partial T} \right)_{ij}^{n+1,k+\frac{1}{2}}$ to match with $\frac{\phi_{j}^{n+1,k+\half} - \phi_i^{n+1,k+\half}}{ |\vr_j - \vr_i| }$, while $b_{g,i}^{n+1,k+1}$ matches with $\phi_i^{n+1,k+1}$. This consistent temporal treatment ensures the  relation $  4 \pi \nabla B_g =\left(  b_g + \frac{T}{4} \frac{\partial b_g}{\partial T} \right) \nabla \phi $ and $4 \pi B_g = b_g \phi$ are properly maintained  during  the correction step. In particular, the diffusive flux $F_{g,ij}^{D,n+1,k+\half*}$ is evaluated using information from the prediction step, which helps decouple the space dimension.
    	
    	Utilizing $\phi_i^{n+1,k+1} = ac \, (T_i^{n+1,k+1})^4$, we reformulate \eqref{eq_correction_B} into the following \textit{scalar} nonlinear system:
    	\begin{equation}\label{eq_corretion}
    		\begin{aligned}
    			T_i^{n+1,k+1} +& \frac{ a}{C_{v,i}} \sum_{g=1}^G \chi_{g,i}^{n+1, k+ \half}  b_{g,i}^{n+1, k+1} (T_i^{n+1,k+1})^4  - A_i =0,
    		\end{aligned}
    	\end{equation}
    	where 
    	\begin{equation*}
    		A_i = T_i^n + \frac{\Delta t}{C_{v,i}} \sum_{g=1}^G \chi_{g,i}^{n+1, k+ \half}\left( \frac{1}{c \Delta t} \rho_{g,i}^n- \frac{1}{\Delta V_i}  \sum_{j \in \mathcal{N}_i} 
    		 F_{g,ij}^{n+1,k+\half*} \right).
    	\end{equation*}
    	
    	In this way, updating the temperature $T_i^{n+1,k+1}$ reduces to solving a \textit{spatially decoupled scalar} nonlinear equation within each cell, thereby avoiding the need to solve a global nonlinear system across all cells. 
    	When the original equation reduces to the gray case, \eqref{eq_corretion} simplifies to a polynomial nonlinear system, as  demonstrated in \cite{xiong2022high,tang2021accurate}. Here, we use Newton's iteration to solve these scalar nonlinear equations, with details provided in \ref{appendix_A}.
    	
    	\textbf{The stop criteria.}
    	The two steps are solved with the iterative number $k$ until convergence is reached, where the stop criteria is defined as
    	\begin{equation} \label{eq_stop_criteria}
    		\| T^{n+1.k+1} - T^{n+1.k} \|_1 < \gamma.
    	\end{equation}
    	We note that, in theory, we cannot prove the convergence of the Picard iteration with 
    	\eqref{eq_prediction} and \eqref{eq_corretion}. Nonetheless, we use the $L_1$ norm in our stopping criterion, and numerically all examples converge under the tolerance $\gamma = 10^{-8}$ with the maximum iterative number $50$.

\begin{remark}
    (Numerical boundary treatment)  
     In our numerical tests, we consider only the isotropic  boundary conditions, assuming local equilibrium at the boundary so that $I_{bc} = B(\nu,T)$. Taking 1D as an example, on the left boundary at $x_{\half}$ with given $T_L$, the inflow boundary $I_{bc}$ is set as $I_{bc} = B(\nu,T_{L})$, and $I_{g,bc} = b_g(T_{L})\frac{ac(T_{L})^4}{4\pi}$. 

     For the inflow part of $I_{g,\frac{1}{2}}$, we have
    \begin{equation}
        I_{g,\frac{1}{2}} = I_{g,bc}, \quad \mu >0,
    \end{equation}
    while for the outflow part of $I_{g,\frac{1}{2}}$, we take the approximated formal solution
\begin{equation} \label{eq_bc_formal}
		\begin{aligned}
		  I_{g,\frac{1}{2}}
            = & e^{-c \sigma_{g}(t-t^n)} I_{g}\left( \mfx_{\half}(t^n), \mu,t^n\right) 
            \\ 
		  & +  \theta_{g}     ( 1 - e^{-c \sigma_{g}(t-t^n)}) B_g \left( x_{\half},  t\right) 
            \\
            & + ( 1 - \theta_{g}  ) \Bigg[ B_g  \left( x_{\half},  t\right)-B_g \left( \mfx_{\half}(t^n),  t^n\right) e^{-c \sigma_{g}(t-t^n)} \\ 
			& \quad  -  \frac{1}{c \sigma_{g}} (1 - e^{-c \sigma_{g}(t-t^n)}) \left( \pdrv{B_g }{t}(x_{\half},t) +  c \mu  \pdrv{B_g}{x}  (x_{\half},t)  \right)   \Bigg], \quad \mu <0.
		\end{aligned}
    \end{equation}
    We set $\theta_{g,\half} = \theta_{g,1}$, $\sigma_{g,\half} = \sigma_{g,1}$, and define $\phi_{\half} = \frac{1}{2}(4 \pi I_{bc} + \phi_1^n)$. The temperature $T_{\half}$ is then computed as $T_{\half} = \left(\frac{\phi_{\half}}{ac}\right)^{\frac{1}{4}},$ which is subsequently used to evaluate both $b_{g,\half}$ and $\left(b_g + \frac{T}{4}\frac{\partial b_g}{\partial T}\right)_{\half}$. Substituting  \eqref{eq_bc_formal} into $\left\langle \mu I_{g,\frac{1}{2}} \right\rangle $, where $\left\langle \cdot \right\rangle := 2 \pi \int_{-1}^1 \dd{\mu}$, and applying numerical discretization, we obtain:
 	\begin{equation} \label{eq_bc}
		\begin{aligned}
			\left\langle \mu I_{g,\frac{1}{2}} \right\rangle = &
             I_{g,bc}\left\langle \mu \mathbf{1}_{\mu>0}\right\rangle + ( 1 - \theta_{g,\half} e^{-c \sigma_{g,\frac{1}{2}}(t-t^n)} )  b_{g,\half} \phi_{\half}  \left\langle \mu \mathbf{1}_{\mu<0}\right\rangle
            \\
            &+ \left\langle \mu e^{-c \sigma_{g,\frac{1}{2}}\left(t-t_n\right)} I_{g}\left( \mfx_{\frac{1}{2}}(t^n), \mu,t^n\right)\mathbf{1}_{\mu<0} \right\rangle  
            \\
            &- (1- \theta_{g,\half} )\left\langle \mu  e^{-c \sigma_{g,\frac{1}{2}} \left(t-t^n\right)}  B_g \left( \mfx_{\frac{1}{2}}(t^n),t^n\right)\mathbf{1}_{\mu<0} \right\rangle 
            \\
            &- ( 1 - \theta_{g,\half} ) \frac{\left\langle \mu^2 \mathbf{1}_{\mu<0}\right\rangle}{ 4 \pi \sigma_{g,\frac{1}{2}}}(1-e^{-c \sigma_{g,\frac{1}{2}}\left(t-t_n\right)})  (b_g + \frac{T}{4} \pdrv{b_g}{T})_{\half} \frac{\phi_{\half}  -  \phi_1 }{\Delta x_1 / 2},
		\end{aligned}
	\end{equation}
 where the directional indicator function $\mathbf{1}_{\mu \lessgtr 0}$ takes value $1$ when $\mu \lessgtr 0$ and 0 otherwise. Here, we have $\left\langle \mu\mathbf{1}_{\mu<0}\right\rangle = -\pi$, $\left\langle \mu \mathbf{1}_{\mu>0}\right\rangle = \pi$ and  $\left\langle \mu^2 \mathbf{1}_{\mu<0}\right\rangle = \frac{4 \pi}{6}$.
 
 A similar approach is applied for vacuum boundary condition, and the formulation can be naturally extended to two-dimensional cases along each direction.
\end{remark}

    \subsection{A Monte Carlo method for the microscopic system}
    In this subsection, we present a particle-based MC method for solving the microscopic transport equation. The microscopic evolution also yields the convective flux $F_{g,ij}^{C,n+1}$, which is used to in the finite volume scheme \eqref{eq_fullDiscretization}. Since the emission source $B_{g,i}^{n+1}$ can be determined by the material temperature $T_i^{n+1}$ from the macroscopic system, the microscopic equation \eqref{eq_microscopicSystem_A} reduces to a purely absorbing problem. A MC method for \eqref{eq_microscopicSystem_A} is thus straightforward. Finally, to ensure consistency, the final material temperatures $T_i^{n+1}$ are updated by tallying the results from the MC solution of the microscopic transport equation, rather than using the values from the macroscopic system.
    
    Due to Duhamel’s principle, in each time interval $[t^n,t^{n+1}]$, the microscopic equation \eqref{eq_microscopicSystem_A} can be divided into the following two subsystems:
\begin{equation} \label{eq_Duhammel_A}
\begin{aligned}
 \left\{
\begin{array}{l}
\frac{1}{c} \frac{\partial I_{1,g}}{\partial t} + \vOmega \cdot \nabla I_{1,g} + \sigma_g I_{1,g} = 0, \\[8pt]
I_{1,g}(\vx, \vOmega, t^n) = I_{g}^n(\vx, \vOmega), \\[8pt]
I_{1,g}(\vx, \vOmega, t)\big|_{\vx \in \partial V} = I_{bc,g}(\vx, \vOmega, t), \quad \vOmega \cdot \mathbf{n} < 0,
\end{array}
\right.
\end{aligned}
\end{equation}
and  
\begin{equation} \label{eq_Duhammel_B}
\begin{aligned}
\left\{
\begin{array}{l}
\frac{1}{c} \frac{\partial I_{2,g}}{\partial t} + \vOmega \cdot \nabla I_{2,g} + \sigma_g I_{2,g}= \sigma_g  B_g, \\[8pt]
I_{2,g}(\vx, \vOmega, t^n) = 0, \\[8pt]
I_{2,g}(\vx, \vOmega, t)\big|_{\vx \in \partial V} = 0, \quad \vOmega \cdot \mathbf{n} < 0.
\end{array}
\right.
\end{aligned}
\end{equation}
Here $I_{g}^n$ and $I_{bc,g}$ represent the group-wise radiation intensity 
at time $t^n$ and on the boundary, respectively. 
Systems \eqref{eq_Duhammel_A} and \eqref{eq_Duhammel_B} indicate that 
the total intensity originates from two distinct sources. 
The first source consists of photons from the previous time step 
(or initial condition) and the boundary, which are already known and can be tracked immediately. 
The second source corresponds to the unknown photons emitted by the material, 
which can be calculated with $T^{n+1}$ from  the macroscopic equation.

    In a particle based method, the group-wise radiation intensity $I_g$ is represented as a collection of particles, and can be expressed as
    \begin{equation*}
I_g(\vx, \vOmega, t)=\sum_{p=1}^{N(t)} c \omega_p^g(t) \delta(\vx-\vx_p(t)) \delta(\vOmega-\vOmega_p(t)),
\end{equation*}
where $\vx_p(t)$, $\vOmega_p(t)$ and $\omega_p^g(t)$ are the location, angular direction and group-wise energy weight of particle $p$ at time $t$, respectively. $N(t)$ is the total number of MC particles used at time $t$. 

The essence of the particle based MC method lies in the fact that each particle is represented as a quadruple $(\vr_p,\vOmega_p,\omega_p^g,t_p)$, which is used to mimic the transport and absorption behavior. We can then recover the necessary physical quantities (related to the solution of the original PDE) from these ensembles of particles.


 \subsubsection{Particle sampling}
To solve systems \eqref{eq_Duhammel_A} and \eqref{eq_Duhammel_B}, one needs to sample 
MC particles from the previous time step  (or the initial condition), 
the boundary condition and the emission source. 
Each particle is assigned a position, angular direction, time, 
and  energy weight. 
Details on how to sample position, angular direction, and time 
can be found in \cite{fleck1971implicit,wollaber2016four}. 
Here, we focus on the way to compute the corresponding energy 
and assign energy weights to ensure energy conservation.


 For particles from the initial condition or the previous time step,  the function $I_g^n$ is employed for sampling. The radiation energy for this portion of particles can be obtained by integrating the radiation intensity $I^n_g$ over the cell and angle
\begin{equation}\label{eq_sample_I}
    E_{g,i}^{I,n} = \frac{1}{c} \int_{V_i} \int_{4\pi} I^n_g(\vx,\vOmega) \dd{\vOmega} \dd{\vx}.
\end{equation}
Similarly, The total energy due to the boundary condition is obtained by integrating the function $I_{bc,g}$ over the time step, boundary surface, and angle corresponding to an inflow part
\begin{equation}\label{eq_sample_bc}
    E_{g}^{I,bc} = \int_{t^{n}}^{t^{n+1}}   \int_{\vOmega\cdot\mathbf{n}<0 } \int_{\partial V} ( - \vOmega\cdot\mathbf{n}) I_{bc,g}(\vx,\vOmega,t) \dd{S} \dd{\vOmega}  \dd{t}.
\end{equation}
However, the boundary source is frequently
specified as a Planck function at a fixed temperature, i.e., $I_{bc,g}(\vx,\vOmega,t) = B_g\left(T_{bc}\left( \vx\right)\right)$, in which case the total energy may be written more simply as
\begin{equation*}
    E_{g}^{I,bc} = \pi  \Delta t   \int_{\partial V} B_g(T_{bc} \left( \vx\right) )  \dd{S}.
\end{equation*}
As noted from the last subsection, the convective flux $F^{C,n+1}_{ij}$, given by
\begin{equation*}         
        \begin{aligned}  
            &F_{g,ij}^{C,n+1} =  
            \Bar{F}_{g,ij}^{I,n+1} 
            - (1 - \theta_{g,i}^{n+1}) \Bar{F}_{g,ij}^{B,n+1,+}
            - (1 - \theta_{g,j}^{n+1}) \Bar{F}_{g,ij}^{B,n+1,-},
        \end{aligned}
    \end{equation*}
is obtained by the MC method.
Those photons from the source \eqref{eq_sample_I} and \eqref{eq_sample_bc} contributes to the first term $\Bar{F}_{g,ij}^{I,n+1} $. While for $\Bar{F}_{g,ij}^{B,n+1,+}$ and $\Bar{F}_{g,ij}^{B,n+1,-}$, these photons can be calculated similarly, given by the ghost sources
\begin{equation}\label{eq_sample_B}
    E_{g,i}^{B,n} = \frac{1}{c} \int_{V_i} \int_{4\pi} B^n_g(\vx) \dd{\vOmega} \dd{\vx},
\end{equation}
and
\begin{equation}\label{eq_sample_B_bc}
    E_{g}^{B,bc} = \int_{t^{n}}^{t^{n+1}}   \int_{\vOmega\cdot\mathbf{n}<0 } \int_{\partial V} ( - \vOmega\cdot\mathbf{n}) B_{bc,g}^n(\vx) \dd{S} \dd{\vOmega}  \dd{t}.
\end{equation}
The definition of $B_{bc,g}^n(\vx)$ is consistent with the boundary treatment for the macroscopic equation. In 1D example, $B_{bc,g}^n(\vx)$ corresponds specifically to the term $b_{g,\half}\phi_{\half}$ defined in the first line of equation \eqref{eq_bc}. We emphasize that photons from ghost sources are used exclusively for tallying the convective flux $F^{C,n+1}_{ij}$—they neither enter the census\footnote{In radiative transfer terminology, this typically refers to particles advancing to the next time step.} nor contribute to absorption energy calculations.

Next, we consider the MC particles emitted from the material. For these particles, the radiation energy is computed from the macroscopic variables, which is

\begin{equation}\label{eq_sample_em}
E_{g,i}^{R,n+1}=\int_{t_n}^{t_{n+1}} \int_{V_i} \int_{4 \pi} \sigma_g^{n+1} B_g^{n+1}\dd{\vOmega} \dd{\vx} \dd{t}=   4 \pi \sigma_{g,i}^{n+1} B_{g,i}^{n+1} \Delta V_i \Delta t,
\end{equation}
where $B_{g,i}^{n+1}$ is the cell average evaluated using $T^{n+1}_i$ from the macroscopic equations. As shown in  \cite{fleck1984random,shi2020continuous,densmore2011asymptotic}, in order to capture the equilibrium diffusion limit, a linear representation of the emission source is necessary. In the following, we will present a continuous \textit{source tilting} method for the emission source, extending our previous work \cite{shi2020continuous} for the frequency-dependent radiative transfer equations.

For illustration, we consider the 1D case, while the extension to 2D is available in \cite{shi2020continuous} and is straightforward along each direction. In 1D, for each cell
 $V_i$ centered at $x_i$ with mesh size $\Delta x_i$, we define the  linear reconstruction of $ (B_{tilt})^{n+1}_{g,i}$ as 
\begin{equation}\label{eq_sourceTilting}
(B_{tilt})^{n+1}_{g,i}(x) = B^{n+1}_{g,i} + 
\begin{cases} 
s^{B}_{g,i } \left( x - x_i \right), & \text{if } \mu < 0, \\
s^{F}_{g,i } \left( x - x_i \right), & \text{if } \mu > 0,
\end{cases}
\end{equation}
where $B_{g,i}^{n+1}$ is the cell average. The backward and forward one-sided slopes are given by
\begin{equation*}
s^{B}_{g,i } = \frac{B_{g,i}^{n+1} - B^{n+1}_{g,i - 1}}{\frac{1}{2}( \Delta x_i   + \Delta x_{i-1} ) }, \quad s^{F}_{g,i } = \frac{B^{n+1}_{g,i + 1} - B^{n+1}_{g,i}}{\frac{1}{2}( \Delta x_{i+1}   + \Delta x_{i} )}.
\end{equation*}
With this definition, the particle positions drawn from the emission source in cell $V_i$ follow the probability distribution function
\begin{equation}\label{eq_sourceTilting_PDF}
    p_{g}(x)\Big|_{V_i} = \frac{1}{\Delta x_i} \frac{(B_{tilt})_{g,i}^{n+1}(x)}{ B^{n+1}_{g,i}}.
\end{equation}
To ensure positivity of the probability distribution function, the slopes must satisfy \cite{densmore2011asymptotic}
\begin{equation*}
    \lvert s^{B}_{g,i } \rvert \leq \frac{2B^{n+1}_{g,i}}{\Delta x_i}, \quad
     \lvert s^{F}_{g,i } \rvert \leq \frac{2B^{n+1}_{g,i}}{\Delta x_i}.
\end{equation*}

We remark that equation \eqref{eq_sourceTilting} is used to bias the distribution of the locations for the emission photons via \eqref{eq_sourceTilting_PDF} (replacing uniform sampling in $V_i$), while the total emission source strength \eqref{eq_sample_em} remains determined directly by the cell-averaged values $B_{g,i}^{n+1}$.

\subsubsection{Particle tracking}
 After all MC particles have been sampled, the subsequent task involves tracking each particle's trajectory.  As previously noted, the system transitions to a purely absorbing scenario once the emission source is established, simplifying the tracking process. Three fundamental events govern particle trajectories:  (i) absorption by material, (ii) traversal across a cell interface, or (iii) survival until reaching the end of the time step at  $t^{n+1} $. Each event corresponds to a distinct characteristic distance:  the absorption distance $d_A$, the boundary distance $d_B$ to the cell interface, and the temporal survival distance  $d_T$. The distance to the cell interface $d_B$ satisfies
 \begin{equation*}
     \vx_B - \vx_p = d_B \vOmega_p,
 \end{equation*}
 where $\vx_p$ is the location of particle $p$, $\vx_B$ is the cell interface location in direction $\vOmega_p$. The temporal survival distance $d_T$  is
 \begin{equation*}
     d_T = c (t^{n+1} - t),
 \end{equation*}
 where $t$ is the current time of each particle and $c$ is the speed of light. For the absorption event, we employ the \textit{continuous energy deposition} variance reduction technique  \cite{fleck1971implicit}. In this approach, the absorption distance $d_A$ is implicitly determined through exponential decay  of the energy weight rather than explicit calculation.
In summary, if we let 
\begin{equation*}
    d = \min{(d_B,d_T)},
\end{equation*}
the particle $p$ is advanced according to
\begin{equation*}
    \begin{aligned}
        &\vx_p^{'} = \vx_p + \vOmega_p d,
        \\
        &\vOmega_p^{'} = \vOmega_p,
        \\
        &t^{'} = t + d/c,
        \\ \label{CED}
        &\omega_p^{g'} = \omega_p^{g} e^{-\sigma_{g} d},
    \end{aligned}
\end{equation*}
where $\vx_p^{'}$, $\vOmega_p^{'}$, $t^{'}$ and $\omega_p^{g'} $  denote the new location, direction, time and group-wise energy weight of each particle, respectively. Thanks to $d_B$, we partition the tracking step for each spatial cell, where the group-wise absorption opacity $\sigma_g$ is assumed to be constant.

The tracking process for each particle continues until one of three termination conditions is met: (i)  the particle's current energy weight falls below  $ 0.01\% $  of its initial birth weight, (ii) the particle leaks out of the physical domain $V$, or (iii) the particle enters census.

\subsubsection{Tally}
During the evolution of particle trajectory, the following three quantities need to be tallied. 
\begin{itemize}
    \item The first quantity is the radiation energy in each cell at at the new time step. This value can be evaluated by
\begin{equation} \label{eq_def_EI}
    E_{g,i}^{I,n+1}  = \sum_{p=1}^M \omega_p^g(t^{n+1}),
\end{equation}
where $p= 1,2,\ldots,M$ denotes the number of particles that go to census in the cell $V_i$. We note that only the particles from the radiation sources \eqref{eq_sample_I} \eqref{eq_sample_bc} and \eqref{eq_sample_em} are used for tallying this quantity. The group-wise angular integrated intensity at the new time step can then be calculated by
\begin{equation}\label{eq_tally_Tr2}
	\rho_{g,i}^{n+1} = \frac{c E_{g,i}^{I,n+1} }{\Delta V_i} .
\end{equation}
Additionally, we use this quantity to compute the radiation temperature:
\begin{equation}\label{eq_tally_Tr}
	T_{r,i}^{n+1} = \left( \frac{\sum_{g=1}^G E_{g,i}^{I,n+1} }{a \Delta V_i}\right )^{\frac{1}{4}}.
\end{equation}
\item The second quantity is the radiation energy deposited due to absorption in each cell during $[t^n,t^{n+1}]$, which is given by
\begin{equation}\label{eq_def_EA}
    E_{g,i}^{A,n+1} := \frac{1}{c} \int_{t^n}^{t^{n+1}}  \int_{V_i} \int_{4 \pi} c \sigma_g I_g \dd{\vOmega} \dd{\vx} \dd{t} =  \sum_{p=1}^N \omega_p^{g} (1 - e^{-\sigma_{g,i} d} ),
\end{equation}
where $p= 1,2,\ldots,N$ denotes the number of particles traversing cell $V_i$ during  $[t^n,t^{n+1}]$, and $d$ represents the traveling distance within $V_i$ over $[t^n,t^{n+1}]$. We emphasize that only particles originating from the sources in \eqref{eq_sample_I}, \eqref{eq_sample_bc}, and \eqref{eq_sample_em} are considered for tallying this quantity. This value is used in updating the material temperature $T_i^{n+1}$ by integrating equation \eqref{eq_microscopicSystem_B} over $V_i$ and $[t^n,t^{n+1}]$:
\begin{equation} \label{eq_tally_Tm}
    T_i^{n+1} = T_i^n + \frac{1}{C_{v,i}} \frac{1}{ \Delta V_i}    \sum_{g=1}^G \left  (E_{g,i}^{A,n+1} -  E_{g,i}^{R,n+1} \right).
\end{equation}
To ensure consistency, $T_i^{n+1}$ is updated for the next time step using \eqref{eq_tally_Tm}, rather than adopting the quantities from the macroscopic system.

\item  The final quantities to be computed are the convective fluxes across the surface $S_{ij} $ during  $[t^n,t^{n+1}]$. They are computed by
\begin{equation} \label{eq_def_fluxA}
    \Bar{F}_{g,ij}^{I,n+1} = \frac{1}{\Delta t} \sum_{p=1}^K \text{sign}(\vOmega_p \cdot \mathbf{n}_{ij})\omega_p^g,
 \end{equation}
 where $p= 1,2,\ldots,K$  denotes the number of particles originated from the sources \eqref{eq_sample_I} and \eqref{eq_sample_bc} that traverse the  interface $S_{ij}$ during $[t^n,t^{n+1}]$. And
 \begin{equation}\label{eq_def_fluxB}
     \begin{aligned}
        &\Bar{F}_{g,ij}^{B,n+1,+} = \frac{1}{\Delta t} \sum_{p=1}^{L} \text{sign}(\vOmega_p \cdot \mathbf{n}_{i,j})\omega_p^g \mathbf{1}_{\vOmega_p \cdot \mathbf{n}_{ij}>0},
        \\
        &\Bar{F}_{g,ij}^{B,n+1,-} = \frac{1}{\Delta t} \sum_{p=1}^{L} \text{sign}(\vOmega_p \cdot \mathbf{n}_{i,j})\omega_p^g \mathbf{1}_{\vOmega_p \cdot \mathbf{n}_{ij}<0},
     \end{aligned}
 \end{equation}
  where $p= 1,2,\ldots,L$  denotes the number of particles originated from the ghost sources \eqref{eq_sample_B} and \eqref{eq_sample_B_bc} that traverse the interface $S_{ij}$ during $[t^n,t^{n+1}]$ . 
\end{itemize}

\subsection{An overall algorithm}
Finally we present our updating procedure from $t^n$ to $t^{n+1}$ in Algorithm \eqref{eq_algorithm}.

\begin{algorithm}[H]
	\caption{The updating procedure from $t^n$ to $t^{n+1}$.}
	\label{eq_algorithm}
	\begin{algorithmic}[1]
		\STATE Evaluate multi-group opacity $\sigma_g(T^n_i)$;
		\STATE Sample MC particles from sources $ E_{g,i}^{I,n}$, $ E_{g}^{I,bc}$, $ E_{g,i}^{B,n}$, $ E_{g}^{B,bc}$;
		\STATE Track particle trajectories,
		\begin{itemize}
			\item For particles from $ E_{g,i}^{I,n}$, $ E_{g}^{I,bc}$, tally the convective flux $\Bar{F}_{g,ij}^{I,n+1}$, the radiation energy $E_{g,i}^{I,n+1}$ and the absorbed energy $E_{g,i}^{A,n+1} $; this resolves \eqref{eq_Duhammel_A};
			\item For particles from $ E_{g,i}^{B,n}$, $ E_{g}^{B,bc}$, tally only the convective flux $\Bar{F}_{g,ij}^{B,n+1,+}$ and $\Bar{F}_{g,ij}^{B,n+1,-}$;
		\end{itemize}
		\STATE Solve the macroscopic system \eqref{eq_fullDiscretization} with 
    	\eqref{eq_prediction} and \eqref{eq_corretion} until the stop criteria \eqref{eq_stop_criteria} is reached. Once $T_i^{n+1}$ is obtained, update $B_{g,i}^{n+1}$ accordingly;
		\STATE Evaluate multi-group opacity $\sigma_g(T^{n+1}_i)$;
		\STATE Reconstruct linear source term  $(B_{tilt})_{g,i}^{n+1}$;
		\STATE Sample MC particles from the emission source $E_{g,i}^{R,n+1}$;
		\STATE Track particle trajectories, tally the radiation energy $E_{g,i}^{I,n+1}$ and the absorbed energy $E_{g,i}^{A,n+1} $; this resolves \eqref{eq_Duhammel_B};
		\STATE Update group-wise angular integrated intensity $\rho_{g,i}^{n+1}$ and material temperatures $T_i^{n+1}$ using \eqref{eq_tally_Tr2} and \eqref{eq_tally_Tm}, respectively.
	\end{algorithmic}
\end{algorithm}

\section{Formal asymptotic analysis}
\label{sec:analysis}

In this section, we will formally prove the proposed numerical method preserves the asymptotic property in the equilibrium diffusion limit. 

Let  $\varepsilon > 0$ denote the dimensionless Knudsen number, defined as the ratio of the mean free path to the characteristic length scale of the system. If the system
is optically thick, and the speed of light is fast compared to the time evolution of $I$, the  radiative transfer equations \eqref{eq_RTE} can be rewritten in the following scaled form \cite{larsen2013properties}:
\begin{equation} \label{eq_RTE_scaled}
    \begin{aligned}
		&\frac{\varepsilon^2}{c} \frac{\partial I}{\partial t}+\varepsilon \vOmega \cdot \nabla I=\sigma (B -I), 
             \\ 
		&\varepsilon^2 C_v \pdrv{T}{t}=\int_0^{\infty} \int_{4 \pi} \sigma(I-B) \dd{\vOmega}\dd{\nu},
    \end{aligned}
\end{equation}
where the opacity, heat capacity, and speed of light are scaled as
\begin{equation}\label{eq_diffusiveScale}
\sigma  \rightarrow \frac{\sigma}{\varepsilon}, \quad c \rightarrow \frac{c}{\varepsilon}, \quad C_v \rightarrow \varepsilon C_v,
\end{equation}
respectively. We note that the scale of $c$ does not apply to the emission source term because this term is a Planckian at the local material temperature, which does not change in the equilibrium diffusion limit. In \cite{larsen1987asymptotic,larsen1983asymptotic}, Larsen et al. have shown that away from boundaries and initial times, as $\varepsilon \rightarrow 0$, the leading order radiation intensity $I^{(0)}$ approaches to a Planckian at the local temperature, 
		\begin{equation*}
			I^{(0)} = B(\nu,T^{(0)}),
		\end{equation*}
		and the leading order material temperature $T^{(0)}$ satisfies the following radiation diffusion equation
		\begin{equation}\label{eq_def_diffLimit}
			a \pdrv{}{t} (T^{(0)})^4 + C_v \pdrv{}{t}T^{(0)} = \nabla \cdot \left( \frac{ac}{3 \sigma_R} \nabla (T^{(0)})^4 \right),
		\end{equation}
		with the Rosseland mean opacity $\sigma_R$ given by 
		\begin{equation}\label{eq_def_Rosseland}
			\frac{1}{\sigma_R} 
			= \frac{\int_0^{\infty} \frac{1}{\sigma}  \pdrv{B(\nu,T^{(0)})}{T} \dd{\nu}}{ \int_0^{\infty}    \pdrv{B(\nu,T^{(0)})}{T}\dd{\nu} }.
		\end{equation}

Next, We will analyze the asymptotic behavior of the proposed method by expanding the group-integrated radiation intensity and the material temperature in powers of $\varepsilon$:
   \begin{equation*}
       I_g = \sum_{k=0}^{\infty} \varepsilon^{k} I_g^{(k)},
   \end{equation*}
   and
   \begin{equation*}
       T = \sum_{k=0}^{\infty} \varepsilon^{k} T^{(k)},
   \end{equation*}
   and compare terms that are the same order in $\varepsilon$.
   The temperature-dependent terms can also be expanded into a power series in $\varepsilon$. For example, the group integrated Planck function $B_g(T)$ can be written as
   \begin{equation*}
       B_g = B_g^{(0)} + \varepsilon B_g^{(1)} + \cdots,
   \end{equation*}
   where 
   \begin{equation*}
       \begin{aligned}
           &B_g^{(0)} = B_g |_{\varepsilon = 0} = B_g|_{T = T^{(0)}},
           \\
           &B_g^{(1)} = \pdrv{B_g}{\varepsilon}\Big|_{\varepsilon = 0} = \pdrv{B_g}{T}\pdrv{T}{\varepsilon}\Big|_{\varepsilon = 0} = \pdrv{B_g}{T}\Big|_{T = T^{(0)}} T^{(1)}.
       \end{aligned}
   \end{equation*}

   We first show the multi-group discretization using piecewise constant approximation is asymptotic preserving, this proposition comes from \cite{ZHANG2023112368}. For the sake of readability, we include the proof for Proposition \ref{eq_prpo1} in the appendix.
   
\begin{proposition} \label{eq_prpo1}
    When $\varepsilon$ tends to $0$, the limit of the  multi-group discretization of the scaled radiative transfer equation \eqref{eq_RTE_scaled} utilizing piecewise constant approximation \eqref{eq_Approximation_sigma} approaches to the radiation diffusion equation \eqref{eq_def_diffLimit}.
\end{proposition}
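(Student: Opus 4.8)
The plan is to carry out a Hilbert (Chapman--Enskog) expansion of the scaled multi-group system and match powers of $\varepsilon$. First I would apply the diffusive scalings \eqref{eq_diffusiveScale} to the multi-group equations \eqref{eq_microscopicSystem}, producing the group-wise counterpart of \eqref{eq_RTE_scaled},
\begin{equation*}
\frac{\varepsilon^2}{c}\frac{\partial I_g}{\partial t}+\varepsilon\,\vOmega\cdot\nabla I_g=\sigma_g(B_g-I_g),\qquad
\varepsilon^2 C_v\frac{\partial T}{\partial t}=\sum_{g=1}^G\int_{4\pi}\sigma_g(I_g-B_g)\dd{\vOmega},
\end{equation*}
and then substitute the ansatz $I_g=\sum_{k\ge0}\varepsilon^k I_g^{(k)}$, $T=\sum_{k\ge0}\varepsilon^k T^{(k)}$ together with the induced expansion $B_g=B_g^{(0)}+\varepsilon B_g^{(1)}+\cdots$, where $B_g^{(0)}=B_g(T^{(0)})$ and $B_g^{(1)}=\partial_T B_g|_{T^{(0)}}\,T^{(1)}$.

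Collecting the $O(\varepsilon^0)$ terms of the transport equation gives $\sigma_g(B_g^{(0)}-I_g^{(0)})=0$, hence $I_g^{(0)}=B_g^{(0)}=B_g(T^{(0)})$ is isotropic, which already recovers the leading-order Planckian $I^{(0)}=B(\nu,T^{(0)})$. At $O(\varepsilon^1)$ one obtains $\vOmega\cdot\nabla I_g^{(0)}=\sigma_g(B_g^{(1)}-I_g^{(1)})$, so $I_g^{(1)}=B_g^{(1)}-\sigma_g^{-1}\vOmega\cdot\nabla B_g^{(0)}$. At $O(\varepsilon^2)$ the transport equation reads $\tfrac1c\partial_t I_g^{(0)}+\vOmega\cdot\nabla I_g^{(1)}=\sigma_g(B_g^{(2)}-I_g^{(2)})$; integrating over $\vOmega\in\mathbb{S}^2$ and using $\int_{4\pi}\vOmega\,\dd{\vOmega}=0$ and $\int_{4\pi}\vOmega(\vOmega\cdot\mathbf v)\,\dd{\vOmega}=\tfrac{4\pi}{3}\mathbf v$ (so the isotropic $B_g^{(1)}$ piece drops out) gives
\begin{equation*}
\frac{4\pi}{c}\,\partial_t B_g^{(0)}-\frac{4\pi}{3}\nabla\cdot\Big(\frac{1}{\sigma_g}\nabla B_g^{(0)}\Big)=\sigma_g\big(4\pi B_g^{(2)}-\rho_g^{(2)}\big),
\end{equation*}
with $\rho_g^{(2)}=\int_{4\pi}I_g^{(2)}\,\dd{\vOmega}$. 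In parallel, expanding the material equation and checking that its $O(\varepsilon^0)$ and $O(\varepsilon^1)$ right-hand sides vanish (the latter because $\int_{4\pi}\vOmega\cdot\nabla B_g^{(0)}\,\dd{\vOmega}=0$), its $O(\varepsilon^2)$ balance is $C_v\partial_t T^{(0)}=\sum_g\sigma_g(\rho_g^{(2)}-4\pi B_g^{(2)})$.

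The remaining steps are routine: summing the displayed $O(\varepsilon^2)$ radiation identity over $g$ and adding the material balance cancels the unknown terms $B_g^{(2)},\rho_g^{(2)}$, leaving $\sum_g\frac{4\pi}{c}\partial_t B_g^{(0)}+C_v\partial_t T^{(0)}=\sum_g\frac{4\pi}{3}\nabla\cdot\big(\sigma_g^{-1}\nabla B_g^{(0)}\big)$. Using $\sum_g 4\pi B_g^{(0)}=4\pi\int_0^\infty B(\nu,T^{(0)})\,\dd{\nu}=ac(T^{(0)})^4$ turns the first term into $a\,\partial_t(T^{(0)})^4$; writing $\nabla B_g^{(0)}=\partial_T B_g|_{T^{(0)}}\nabla T^{(0)}$ and recognizing that the piecewise-constant group opacities \eqref{eq_Approximation_sigma} make $\sum_g\sigma_g^{-1}\partial_T B_g$ and $\sum_g\partial_T B_g$ the discretizations of $\int_0^\infty\sigma^{-1}\partial_T B\,\dd{\nu}$ and $\int_0^\infty\partial_T B\,\dd{\nu}$, the definition \eqref{eq_def_Rosseland} yields $\sum_g\tfrac{4\pi}{\sigma_g}\nabla B_g^{(0)}=\tfrac{ac}{\sigma_R}\nabla(T^{(0)})^4$, and the summed equation becomes exactly the radiation diffusion equation \eqref{eq_def_diffLimit}.

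I expect the main obstacle to be the $\varepsilon$-order bookkeeping rather than any single hard estimate: one must verify that the $O(\varepsilon^0)$ and $O(\varepsilon^1)$ contributions of both the angle-integrated transport equation and the material equation vanish, so that the first nontrivial balance sits at $O(\varepsilon^2)$, and that the $B_g^{(2)}$/$\rho_g^{(2)}$ terms cancel precisely when the two equations are combined. The one genuinely model-specific point is the identification of the emergent discrete average $\big(\sum_g\sigma_g^{-1}\partial_T B_g\big)\big/\big(\sum_g\partial_T B_g\big)$ with the Rosseland mean $\sigma_R$ of \eqref{eq_def_Rosseland}, which is where the particular piecewise-constant choice \eqref{eq_Approximation_sigma} of $\sigma_g$ enters, and which is the content inherited from \cite{zhang2023asymptotic}.
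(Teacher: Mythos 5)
Your proposal is correct and follows essentially the same route as the paper: a Chapman--Enskog expansion of the scaled multi-group system, solving the $O(1)$ and $O(\varepsilon)$ balances for $I_g^{(0)}=B_g^{(0)}$ and $I_g^{(1)}=B_g^{(1)}-\sigma_g^{-1}\vOmega\cdot\nabla B_g^{(0)}$, then angle-integrating and group-summing the $O(\varepsilon^2)$ equations so that the unknown $B_g^{(2)},\rho_g^{(2)}$ terms cancel against the material equation, and finally identifying the resulting group-wise average $\bigl(\sum_g\sigma_g^{-1}\partial_T B_g\bigr)/\bigl(\sum_g\partial_T B_g\bigr)$ as the discrete approximation of the Rosseland mean \eqref{eq_def_Rosseland}. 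The only cosmetic difference is ordering (you substitute $I_g^{(1)}$ before summing over groups, the paper after), and both treatments agree that the limit carries the group-averaged mean opacity $\hat\sigma_R$ rather than the exact continuous $\sigma_R$.
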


We now state a useful lemma.
\begin{lemma}
    When $\varepsilon$ tends to $0$, we have:
    \begin{itemize}
        \item $\theta^{n+1}_{g,i} = e^{-c \sigma_i^{n+1} \Delta t / \varepsilon^2} \longrightarrow 0$, 
        \item $\theta^{n+1}_{g,ij} =  \half(\theta_{g,i}^{n+1} + \theta_{g,j}^{n+1}   ) \longrightarrow 0$, 
        \item $ D_{g,ij}^{n+1} = ( 1 - \theta^{n+1}_{g,ij}  ) \frac{\varepsilon }{3\sigma^{n+1}_{g,ij}}(1-e^{-c \sigma_{g,ij}^{n+1}\Delta t/ \varepsilon^2}) (b_g + \frac{T}{4} \pdrv{b_g}{T})_{ij}^{n+1} \longrightarrow \frac{\varepsilon }{3\sigma_{g,ij}^{n+1}} (b_g + \frac{T}{4} \pdrv{b_g}{T})_{ij}^{n+1} $ , 
        \item $ \chi_{g,i}^{n+1}  = \frac{\sigma_{g,i}^{n+1} }{\frac{\varepsilon^2}{c \Delta t} + \sigma_{g,i}^{n+1} } \longrightarrow 1 $.
    \end{itemize}
\end{lemma}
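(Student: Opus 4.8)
The plan is to reduce all four limits to the single elementary fact that $e^{-\kappa/\varepsilon^2}\to 0$ as $\varepsilon\to 0^+$ for any fixed $\kappa>0$, and then obtain the remaining assertions from it by elementary limit arithmetic and continuity. Throughout, the limits are taken in the formal diffusive regime prescribed by \eqref{eq_diffusiveScale}, so I treat $c$, $\Delta t$ and the rescaled opacities $\sigma_{g,i}^{n+1}$ as fixed, strictly positive quantities of order one, independent of $\varepsilon$; the harmonic interface value $\sigma_{g,ij}^{n+1}=2\sigma_{g,i}^{n+1}\sigma_{g,j}^{n+1}/(\sigma_{g,i}^{n+1}+\sigma_{g,j}^{n+1})$ then inherits the same properties.

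\textbf{Step 1 (exponential weights).} After the rescaling \eqref{eq_diffusiveScale}, the weight is $\theta_{g,i}^{n+1}=e^{-c\sigma_{g,i}^{n+1}\Delta t/\varepsilon^2}$. With $\kappa:=c\sigma_{g,i}^{n+1}\Delta t>0$ fixed, the exponent $-\kappa/\varepsilon^2\to-\infty$, so $\theta_{g,i}^{n+1}\to 0$; the identical argument with $\sigma_{g,ij}^{n+1}$ in place of $\sigma_{g,i}^{n+1}$ gives $e^{-c\sigma_{g,ij}^{n+1}\Delta t/\varepsilon^2}\to 0$. The second bullet is then immediate, since $\theta_{g,ij}^{n+1}=\half(\theta_{g,i}^{n+1}+\theta_{g,j}^{n+1})$ is an average of two quantities each tending to zero.

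\textbf{Step 2 (diffusion coefficient and $\chi$).} Grouping the factors of $D_{g,ij}^{n+1}$,
\[
D_{g,ij}^{n+1}=\bigl(1-\theta_{g,ij}^{n+1}\bigr)\,\bigl(1-e^{-c\sigma_{g,ij}^{n+1}\Delta t/\varepsilon^2}\bigr)\,\frac{\varepsilon}{3\sigma_{g,ij}^{n+1}}\Bigl(b_g+\frac{T}{4}\pdrv{b_g}{T}\Bigr)_{ij}^{n+1},
\]
the first two factors tend to $1$ by Step 1, while the coefficient $(b_g+\frac{T}{4}\pdrv{b_g}{T})_{ij}^{n+1}$ is a finite quantity depending on the interface temperature only (equal, to leading order, to its value at $T^{(0)}$). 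Hence $D_{g,ij}^{n+1}$ is asymptotically equivalent to $\frac{\varepsilon}{3\sigma_{g,ij}^{n+1}}(b_g+\frac{T}{4}\pdrv{b_g}{T})_{ij}^{n+1}$, i.e. the ratio of the two sides tends to $1$ as $\varepsilon\to 0$, which is the meaning of the third bullet; the explicit $\varepsilon$ is retained on both sides because it is what subsequently produces the correct $O(\varepsilon)$ diffusive flux in the limiting energy balance. For the last bullet, $\chi_{g,i}^{n+1}=\sigma_{g,i}^{n+1}/\bigl(\tfrac{\varepsilon^2}{c\Delta t}+\sigma_{g,i}^{n+1}\bigr)$ and $\tfrac{\varepsilon^2}{c\Delta t}\to 0$ while the denominator stays bounded away from zero, so the quotient converges to $\sigma_{g,i}^{n+1}/\sigma_{g,i}^{n+1}=1$.

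I do not expect a genuine obstacle here; the one point worth flagging is the standing hypothesis behind every step, namely that the rescaled opacities remain uniformly bounded away from $0$ and from $\infty$ as $\varepsilon\to 0$. This is precisely the optically-thick assumption under which the equilibrium diffusion limit \eqref{eq_def_diffLimit} holds, and it is a modeling choice rather than something to be established. I would also note that the limits are taken at fixed $\Delta t$, so no interplay between $\varepsilon$ and the discretization parameters arises, and these four limits can subsequently be inserted termwise when passing to the limit in the fully discrete scheme \eqref{eq_fullDiscretization}.
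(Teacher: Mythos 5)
Your proof is correct and is exactly the elementary limit argument the paper intends — indeed the paper states this lemma without any proof, treating all four items as immediate consequences of $e^{-\kappa/\varepsilon^2}\to 0$ for fixed $\kappa>0$ under the standing assumption that the rescaled opacities are $O(1)$ and bounded away from zero. Your remark that the third bullet must be read as asymptotic equivalence (both sides vanish like $\varepsilon$, and it is the retained factor $\varepsilon/(3\sigma_{g,ij}^{n+1})$ that later produces the correct $O(\varepsilon)$ diffusive flux) is a worthwhile clarification that the paper leaves implicit.
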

As a consequence, the convective flux $F^{C,n+1}_{g,ij} $ defined in \eqref{eq_fulldis_fluxA} has the following limit:
\begin{equation*}
	F^{C,n+1}_{g,ij} = \Bar{F}_{g,ij}^{I,n+1} 
	- (1 - \theta_{g,i}^{n+1,k}) \Bar{F}_{g,ij}^{B,n+1,+}
	- (1 - \theta_{g,j}^{n+1,k}) \Bar{F}_{g,ij}^{B,n+1,-} \underset{\varepsilon \rightarrow 0}{\longrightarrow}  0,
\end{equation*}
while the diffusive flux $F_{g,ij}^{D,n+1}$ defined in \eqref{eq_def_diffusiveFLux} scaled by the factor $\frac{1}{\varepsilon}$  has the following limit: 
\begin{equation*}
	\frac{1}{\varepsilon} F_{g,ij}^{D,n+1} = \frac{1}{\varepsilon} D_{g,ij}^{n+1} \frac{\phi_{j}^{n+1} - \phi_i^{n+1}}{ |\vr_j - \vr_i| }|S_{ij}|  \underset{\varepsilon \rightarrow 0}{\longrightarrow} \frac{1 }{3\sigma_{g,ij}^{n+1}} (b_g + \frac{T}{4} \pdrv{b_g}{T})_{ij}^{n+1} \frac{\phi_{j}^{n+1} - \phi_i^{n+1}}{ |\vr_j - \vr_i| }|S_{ij}|.
\end{equation*}

Next, we show that the full discretization, using Picard iteration with a predictor-corrector procedure, possesses the following asymptotic behavior.
\begin{proposition}
    When $\varepsilon$ tends to $0$, in  the prediction step, the limit of the  full discretization of the scaled radiative transfer equation \eqref{eq_RTE_scaled}  approaches to the implicit scheme for the semilinear diffusion equation for $(T^{(0)})^4:$
    \begin{equation} \label{eq_quasi}
    		( \frac{C_v}{4(T^{(0)})^3} + a) \pdrv{}{t} (T^{(0)})^4  = \nabla \cdot \left( \frac{ac}{3 \sigma_R} \nabla (T^{(0)})^4 \right), 
    \end{equation}
    while in  the correction step, the limit approaches to the implicit scheme for the nonlinear diffusion equation for $T^{(0)}:$
        \begin{equation} \label{eq_non}
       		a \pdrv{}{t} (T^{(0)})^4 + C_v \pdrv{}{t}T^{(0)} = \nabla \cdot \left( \frac{ac}{3 \sigma_R} \nabla (T^{(0)})^4\right).
    \end{equation}
\end{proposition}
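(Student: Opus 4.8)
\emph{Proof plan.} The plan is to substitute the diffusive scaling \eqref{eq_diffusiveScale} into the fully discrete update, work from the reformulated equations \eqref{eq_rho_g}, \eqref{eq_prediction_A} and \eqref{eq_correction_A}, and then expand $\rho_{g,i}$, $T_i$ and every temperature‑dependent coefficient in powers of $\varepsilon$, matching the $O(1)$ balance. Careful $\varepsilon$‑order bookkeeping is the backbone: under \eqref{eq_diffusiveScale} one has $\frac{1}{c\Delta t}\mapsto\frac{\varepsilon}{c\Delta t}$, $\sigma_{g,i}\mapsto\sigma_{g,i}/\varepsilon$, $C_v\mapsto\varepsilon C_v$, and $\beta=4acT^{3}/C_v\mapsto\beta/\varepsilon$ (the constant $ac$ from the Planckian normalization is unaffected, per the remark after \eqref{eq_diffusiveScale}); the Lemma then supplies $\theta^{n+1}_{g,i},\theta^{n+1}_{g,ij}\to0$, $\chi^{n+1}_{g,i}\to1$, $F^{C,n+1}_{g,ij}\to0$ (in fact exponentially, through the factor $e^{-c\sigma_{g}\Delta t/\varepsilon^{2}}$), and $F^{D,n+1}_{g,ij}=\varepsilon\,\widetilde F^{D,n+1}_{g,ij}+o(\varepsilon)$ with $\widetilde F^{D,n+1}_{g,ij}\to\frac{1}{3\sigma^{n+1}_{g,ij}}(b_g+\frac{T}{4}\partial_T b_g)^{n+1}_{ij}\frac{\phi^{n+1}_j-\phi^{n+1}_i}{|\vr_j-\vr_i|}|S_{ij}|$. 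Matching the $O(1)$ terms in the scaled form of \eqref{eq_fullDiscretization_A} forces the leading‑order equilibrium $\rho^{(0)}_{g,i}=b^{(0)}_{g,i}\phi^{(0)}_i=4\pi B^{(0)}_{g,i}$ (equivalently $I_g^{(0)}=B_g^{(0)}$); as usual in such analyses I would also take this equilibrium to hold already at time level $n$ (``away from the initial layer'', or by induction on $n$).

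For the prediction step I would start from the scaled form of \eqref{eq_prediction_A}, use the scaled \eqref{eq_fullDiscretization_A} to rewrite the stiff source $\sigma^{n+1}_{g,i}(\rho^{n+1}_{g,i}-b^{n+1}_{g,i}\phi^{n+1}_i)$ in terms of $-\frac{\varepsilon^{2}}{c\Delta t}(\rho^{n+1}_{g,i}-\rho^{n}_{g,i})$ and the scaled fluxes, divide by $\varepsilon$, and let $\varepsilon\to0$ with $\chi^{n+1}_{g,i}\to1$. Summing over $g$ and using $\sum_g b_g=1$ together with $\sum_g 4\pi B_g=acT^{4}=\phi$, the stiff contributions telescope into $\frac{1}{c\Delta t}(\phi^{(0),n}_i-\phi^{(0),n+1}_i)$ while the diffusive part collapses to $\frac{1}{\Delta V_i}\sum_{j\in\mathcal N_i}\sum_g\widetilde F^{D,(0),n+1}_{g,ij}$. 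The last ingredient is the Rosseland reduction: from \eqref{eq_def_dBdT} one obtains $b_g+\frac{T}{4}\partial_T b_g=\frac{\pi}{acT^{3}}\partial_T B_g$, hence $\sum_g\frac{1}{3\sigma_{g}}(b_g+\frac{T}{4}\partial_T b_g)=\frac{1}{3\sigma_R}$ with $\sigma_R$ the discrete Rosseland mean already identified in the first Proposition. Collecting terms, the prediction step limits to the implicit central‑difference discretization of $(\frac{1}{\beta^{(0)}}+\frac{1}{c})\partial_t\phi^{(0)}=\nabla\cdot(\frac{1}{3\sigma_R}\nabla\phi^{(0)})$, which, upon using $\frac{1}{\beta^{(0)}}\partial_t\phi^{(0)}=\frac{C_v}{4(T^{(0)})^{3}}\partial_t(T^{(0)})^{4}$ and $\frac{1}{c}\partial_t\phi^{(0)}=a\,\partial_t(T^{(0)})^{4}$, is exactly the implicit scheme for the quasilinear equation \eqref{eq_quasi} — linear in the unknown $\phi^{(0),n+1}$, the coefficient $1/\beta^{(0)}$ being lagged.

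The correction step is handled identically, the only change being that its left‑hand side in \eqref{eq_correction_A} is $C_v\frac{T^{n+1}_i-T^{n}_i}{\Delta t}$ rather than $\frac{\phi^{n+1}_i-\phi^{n}_i}{\beta^{n+1}_i\Delta t}$; after the same elimination of the stiff source, division by $\varepsilon$, summation over $g$ and passage to the limit one obtains the implicit discretization of $C_v\partial_t T^{(0)}+\frac{1}{c}\partial_t\phi^{(0)}=\nabla\cdot(\frac{1}{3\sigma_R}\nabla\phi^{(0)})$, i.e. of the nonlinear equation \eqref{eq_non} (now genuinely nonlinear in $T^{(0),n+1}$). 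Throughout I would evaluate the Picard loop at a converged state (all iterates equal), consistent with the paper's earlier remark that convergence of the iteration is not proved; the distinction between the two limiting PDEs then reduces purely to the structural difference between the two left‑hand sides. I expect the principal obstacles to be (i) the inhomogeneous flux scaling — $F^{C}$ is exponentially small while $F^{D}=O(\varepsilon)$, so one must check that the surviving diffusive balance appears at the same $\varepsilon$‑order as the residual of the stiff source and not below it — and (ii) the Rosseland reduction, which relies on \eqref{eq_def_phig}, \eqref{eq_def_dBdT}, the normalization $\sum_g b_g=1$, and the piecewise‑constant opacity \eqref{eq_Approximation_sigma}.
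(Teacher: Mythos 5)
Your proposal is correct and follows essentially the same route as the paper: apply the diffusive scaling, invoke the Lemma for the limits of $\theta_{g}$, $\chi_{g}$, $D_{g,ij}$ and the two fluxes, extract the $O(1)$ equilibrium $\rho_g^{(0)}=b_g^{(0)}\phi^{(0)}$ (the paper likewise flags the need for $\sum_g\rho_{g,i}^{n,(0)}=\sum_g b_{g,i}^{n,(0)}\phi_i^{n,(0)}$ at the old time level), sum over groups with $\sum_g b_g=1$, and collapse the group-summed diffusion coefficient to the Rosseland mean via $b_g+\tfrac{T}{4}\partial_T b_g=\tfrac{\pi}{acT^3}\partial_T B_g$. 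The only cosmetic difference is that you evaluate the Picard loop at a converged state, whereas the paper keeps the iteration indices and obtains the limiting discrete schemes with coefficients lagged at levels $k$ and $k+\tfrac12$; this does not change the argument.
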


\begin{proof}
	In the prediction step \eqref{eq_prediction}, with the scale \eqref{eq_diffusiveScale}, we have the scheme:
    \begin{equation}
\begin{aligned}
    \Big(\frac{1}{\beta_i^{n+1,k} \Delta t} 
    &+ \frac{1}{c \Delta t} \sum_{g=1}^G \chi_{g,i}^{n+1, k} b_{g,i}^{n+1, k}    \Big)\phi_i^{n+1,k+\frac{1}{2}}
    \\
    &-   \frac{1}{ \varepsilon \Delta V_i } \sum_{g=1}^G \chi_{g,i}^{n+1, k} \left(  \sum_{j \in \mathcal{N}_i}   D_{g,ij}^{n+1,k}\frac{\phi_{j}^{n+1,k+\half} - \phi_i^{n+1,k+\half}}{ |\vr_j - \vr_i| }|S_{ij}|    \right)= \text{RHS}_i,
\end{aligned}
\end{equation}
where 
\begin{equation*}
    \text{RHS}_i = \frac{1}{\beta_i^{n+1,k} \Delta t}\phi_i^n +  
    \sum_{g=1}^G \chi_{g,i}^{n+1, k}  \left(\frac{1}{c \Delta t}\rho_{g,i}^n- \frac{1}{ \varepsilon \Delta  V_i } \sum_{j \in \mathcal{N}_i} F_{g,ij}^{C,n+1,k} \right).
\end{equation*}

    In the limit  $\varepsilon \rightarrow 0$ with Chapman-Enskog expansion, using the lemma above, the system reduces to:
        \begin{equation}
\begin{aligned}
    &\frac{1}{\beta_i^{n+1,k,(0)} \Delta t} (\phi_i^{n+1,k+\frac{1}{2},(0)} - \phi_i^{n,(0)})
    + \frac{1}{c \Delta t} \sum_{g=1}^G \left(  b_{g,i}^{n+1, k,(0)}    \phi_i^{n+1,k+\frac{1}{2},(0)} - \rho_{g,i}^{n,(0)} \right)
    \\
    &=    \frac{1}{ \Delta  V_i } \sum_{j \in \mathcal{N}_i}  \left(     \sum_{g=1}^G \frac{1 }{3\sigma_{g,ij}^{n+1,k,(0)}} (b_g + \frac{T}{4} \pdrv{b_g}{T})_{ij}^{n+1,k,(0)} \frac{\phi_{j}^{n+1,k+\half,(0)} - \phi_i^{n+1,k+\half,(0)}}{ |\vr_j - \vr_i| }|S_{ij}|    \right).
\end{aligned}
\end{equation}
Using the relations: $\beta = \frac{C_v}{4 a c T^3}$, $\phi = ac T^4$, $\sum_{g=1}^G b_g = 1$, $\sum_{g=1}^G \pdrv{B_g}{T} = \frac{4acT^3 }{4\pi}$, and $b_g + \frac{T}{4} \pdrv{b_g}{T} = \frac{4 \pi}{4acT^3}\pdrv{B_g}{T}$, we have 
\begin{equation} \label{eq_quasiLinear}
	\begin{aligned}
		\left(\frac{C_v}{4 \left(T_i^{n+1,k,(0)}\right)^3} +  a\right) &\frac{\left(T_i^{n+1,k+\frac{1}{2},(0)}\right)^4 - \left(T_i^{n,(0)}\right)^4  }{\Delta t}
		\\
		&=    \frac{1}{ \Delta  V_i } \sum_{j \in \mathcal{N}_i}  \left( \frac{ac }{3\sigma_{R,ij}^{n+1,k,(0)}}  \frac{\left(T_j^{n+1,k+\frac{1}{2},(0)}\right)^4 - \left(T_i^{n+1,k+\frac{1}{2},(0)}\right)^4 }{ |\vr_j - \vr_i| }|S_{ij}|    \right),
	\end{aligned}
\end{equation}
where the Rosseland mean opacity is defined as:
\begin{equation*}
	\frac{1}{\sigma_{R,ij}^{n+1,k,(0)}}= \left( \frac{\sum_{g=1}^{G} \frac{1}{\sigma_g} \pdrv{B_g}{T} }{\sum_{g=1}^{G} \pdrv{B_g}{T} } \right)^{n+1,k,(0)}_{ij}.
\end{equation*}
This provides a consistent approximation to the diffusion equation \eqref{eq_quasi}.   Note that \eqref{eq_quasiLinear} remains valid only when the equilibrium condition: $\sum_{g=1}^{G} \rho_{g,i}^{n,(0)}  = \sum_{g=1}^{G} b_{g,i}^{n,(0)}\phi_{i}^{n,(0)}$ is satisfied. As $\rho_{g,i}^{n}$ is computed via the MC method, the equilibrium property of the MC method is necessary, which will be demonstrated later.

In the correction step \eqref{eq_corretion}, with the scale \eqref{eq_diffusiveScale}, we have the scheme:
\begin{equation}
	\begin{aligned}
		T_i^{n+1,k+1} +& \frac{ a}{C_v} \sum_{g=1}^G \chi_{g,i}^{n+1, k+ \half}  b_{g,i}^{n+1, k+1} (T_i^{n+1,k+1})^4 -A_i =0,
	\end{aligned}
\end{equation}
where 
\begin{equation*}
	\begin{aligned}
		A_i  = T_i^n &+ \frac{\Delta t}{C_v} \sum_{g=1}^G \chi_{g,i}^{n+1, k+ \half}\left( \frac{1}{c \Delta t} \rho_{g,i}^n- \frac{1}{\varepsilon \Delta V_i}  \sum_{j \in \mathcal{N}_i} 
		 F_{g,ij}^{C,n+1,k+\half} \right)
		\\
		&+\frac{\Delta t}{C_v} \sum_{g=1}^G \chi_{g,i}^{n+1, k+ \half}\left( \frac{1}{\varepsilon \Delta V_i}  \sum_{j \in \mathcal{N}_i} 
		D_{g,ij}^{n+1,k+\half}\frac{\phi_{j}^{n+1,k+\half} - \phi_i^{n+1,k+\half}}{ |\vr_j - \vr_i| }|S_{ij}|\right).
	\end{aligned}
\end{equation*}
In the asymptotic limit  $\varepsilon \rightarrow 0$, application of the Chapman-Enskog expansion yields, through analogous arguments to those previously established:
    \begin{equation} \label{eq_nonLinear}
    	\begin{aligned}
    		\frac{C_v}{ \Delta t} (T_i^{n+1,k+1,(0)} - T_i^{n,(0)})
    		&+  a \frac{\left(T_i^{n+1,k+1,(0)}\right)^4 - \left(T_i^{n,(0)}\right)^4  }{\Delta t}
    		\\
    		&=    \frac{1}{ \Delta  V_i } \sum_{j \in \mathcal{N}_i}  \left( \frac{ac }{3\sigma_{R,ij}^{n+1,k+\half,(0)}}  \frac{\left(T_j^{n+1,k+\frac{1}{2},(0)}\right)^4 - \left(T_i^{n+1,k+\frac{1}{2},(0)}\right)^4 }{ |\vr_j - \vr_i| }|S_{ij}|    \right),
    	\end{aligned}
    \end{equation}
    where the Rosseland mean opacity is defined as:
    \begin{equation*}
    	\frac{1}{\sigma_{R,ij}^{n+1,k+\half,(0)}}= \left( \frac{\sum_{g=1}^{G} \frac{1}{\sigma_g} \pdrv{B_g}{T} }{\sum_{g=1}^{G} \pdrv{B_g}{T} } \right)^{n+1,k+\half,(0)}_{ij}.
    \end{equation*}
   This provides a consistent approximation to the diffusion equation \eqref{eq_non}.
\end{proof}

For simplicity, we restrict our analysis to the one-dimensional case with a uniform cell $V_i$ with length $\Delta x$ in subsequent discussions.
\begin{proposition}
    When $\varepsilon$ tends to $0$, the solutions of the microscopic Monte Carlo method can capture the equilibrium diffusion limit \eqref{eq_def_diffLimit}.
\end{proposition}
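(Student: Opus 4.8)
The strategy is to exploit the fact that the microscopic Monte Carlo solver realizes, in expectation, the exact solution of the transport problem obtained from the Duhamel splitting \eqref{eq_Duhammel_A}--\eqref{eq_Duhammel_B} of \eqref{eq_microscopicSystem_A}, with the emission source $B_g^{n+1}=B_g(T^{n+1})$ frozen and $T^{n+1}$ supplied by the macroscopic solver, which by the preceding proposition already satisfies a consistent implicit discretization of \eqref{eq_def_diffLimit}. It therefore suffices to show that, away from initial and boundary layers and to leading order in the Knudsen number $\varepsilon$, the quantities actually carried forward by the particle solver --- the census intensity $\rho_{g,i}^{n+1}$ of \eqref{eq_tally_Tr2}, the absorbed energy $E_{g,i}^{A,n+1}$ of \eqref{eq_def_EA}, and hence the re-updated temperature $T_i^{n+1}$ of \eqref{eq_tally_Tm} --- are consistent with the same limit. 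I would proceed by induction on $n$ under the diffusive scaling \eqref{eq_diffusiveScale}, with the Chapman-Enskog ansatz $I_g=\sum_k\varepsilon^k I_g^{(k)}$ and $T=\sum_k\varepsilon^k T^{(k)}$, assuming that at $t^n$ the census field is a source-tilted local equilibrium, $I_g^{n}=B_g^{(0),n}+O(\varepsilon)$ with $\rho_{g,i}^{n,(0)}=4\pi B_{g,i}^{(0),n}$.

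\textbf{Step 1 (equilibrium property).} First I would show $\rho_{g,i}^{n+1}\to 4\pi B_{g,i}^{(0),n+1}$, i.e.\ $I_g^{(0)}=B_g(T^{(0)})$. Under \eqref{eq_diffusiveScale} the homogeneous part \eqref{eq_Duhammel_A} decays census particles by the factor $e^{-c\sigma_g\Delta t/\varepsilon^2}$, which is exponentially small, so their contribution to $E_{g,i}^{I,n+1}$ drops out; the particular part \eqref{eq_Duhammel_B} has a Duhamel kernel solution whose exponential kernel $\frac{c\sigma_g}{\varepsilon^2}e^{-c\sigma_g(t^{n+1}-s)/\varepsilon^2}$ concentrates at $s=t^{n+1}$ with spatial displacement $O(\varepsilon)$, hence $I_{2,g}(\cdot,t^{n+1})\to B_g^{n+1}=B_g^{(0),n+1}$. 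This gives $I^{(0)}=B(\nu,T^{(0)})$ and, in particular, the equilibrium relation $\sum_g\rho_{g,i}^{n,(0)}=\sum_g 4\pi B_{g,i}^{(0),n}=\sum_g b_{g,i}^{(0),n}\phi_i^{(0),n}$ that was used --- but deferred to be ``demonstrated later'' --- in the proof of the preceding proposition.

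\textbf{Step 2 (diffusive current), Step 3 (energy balance $\Rightarrow$ temperature).} Next I would extract the $O(\varepsilon)$ correction. Following \cite{densmore2011asymptotic,shi2020continuous}, the direction-dependent linear reconstruction \eqref{eq_sourceTilting} is precisely what forces the first-order intensity into the Chapman-Enskog form $I_g^{(1)}=-\frac{1}{\sigma_g}\vOmega\cdot\nabla B_g^{(0)}+B_g^{(1)}$ --- a cell-wise constant emission source would lose this gradient and produce teleportation error --- so that the net radiative surface flux tallied over $\partial V_i$ reduces, to leading order, to a central-difference approximation of $-\varepsilon\,\Delta t\,\nabla\cdot\big(\frac{4\pi}{3\sigma_g}\nabla B_g^{(0)}\big)\Delta V_i$ with the same harmonically averaged interface opacity as the deterministic diffusive flux of the lemma above. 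I would then use the exact discrete energy identity obtained by integrating \eqref{eq_microscopicSystem_A} over $V_i\times 4\pi\times[t^n,t^{n+1}]$ (which the Monte Carlo tallies realize in expectation),
\[
\sum_{g=1}^{G}\big(E_{g,i}^{A,n+1}-E_{g,i}^{R,n+1}\big)=-\sum_{g=1}^{G}\frac{\Delta V_i}{c}\big(\rho_{g,i}^{n+1}-\rho_{g,i}^{n}\big)-\sum_{g=1}^{G}\int_{t^n}^{t^{n+1}}\!\!\int_{\partial V_i}\big\langle\vOmega\cdot\mathbf{n}_i\, I_g\big\rangle\,\dd{S}\,\dd{t},
\]
substitute it into \eqref{eq_tally_Tm} with $C_v\to\varepsilon C_v$, and insert the limits of Steps 1--2 together with $\sum_g 4\pi B_g=acT^4$, $\sum_g 4\pi\,\partial_T B_g=4acT^3$, and $\frac{1}{\sigma_R}\sum_g 4\pi\,\partial_T B_g=\sum_g\frac{4\pi}{\sigma_g}\partial_T B_g$ (i.e.\ \eqref{eq_def_Rosseland}). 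The powers of $\varepsilon$ cancel and one is left with
\[
C_v\frac{T_i^{(0),n+1}-T_i^{(0),n}}{\Delta t}+a\,\frac{(T_i^{(0),n+1})^4-(T_i^{(0),n})^4}{\Delta t}=\nabla\cdot\Big(\frac{ac}{3\sigma_R}\nabla (T^{(0)})^4\Big)\Big|_i+O(\varepsilon),
\]
a consistent implicit discretization of \eqref{eq_def_diffLimit}. Since this coincides to leading order with the macroscopic temperature update analyzed in the preceding proposition, the induction hypothesis is recovered at $t^{n+1}$ and the claim follows for all time steps.

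\textbf{Main obstacle.} The delicate step is Step 2: showing rigorously that the continuous-energy-deposition random walk with the piecewise-linear, direction-dependent source tilting \eqref{eq_sourceTilting} reproduces Fick's law with exactly the coefficient $\frac{1}{3\sigma_g}$ and the harmonic interface average, which requires controlling the thin boundary layers at cell interfaces where $\sigma_g$ jumps and confirming that emitted particles deposit essentially all of their weight within one scaled mean free path of their birthplace. In keeping with the rest of Section \ref{sec:analysis}, all statements here are to be understood for the expectations of the Monte Carlo estimators; the accompanying statistical (variance) error lies outside the scope of this formal analysis.
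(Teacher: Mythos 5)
Your proposal is correct, and its decisive step is the same as the paper's: everything hinges on the observation that the direction-dependent one-sided slopes in the source tilting \eqref{eq_sourceTilting} force the $O(\varepsilon)$ intensity into the Chapman--Enskog form $I_g^{(1)}=-\sigma_g^{-1}\,\vOmega\cdot\nabla B_g^{(0)}+B_g^{(1)}$, and that upwinding at a cell interface turns both one-sided slopes into the same difference $(B^{(0)}_{g,i+1}-B^{(0)}_{g,i})/\Delta x$, so the angular integral of $\mu I_g^{(1)}$ yields the coefficient $\tfrac{4\pi}{3}\cdot\tfrac12\bigl(\sigma_{g,i}^{-1}+\sigma_{g,i+1}^{-1}\bigr)$, i.e.\ exactly the harmonic-average interface opacity of \eqref{eq_eva_sigma}; summing over groups then produces the implicit discrete diffusion scheme with the Rosseland mean. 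Where you differ is in the packaging. The paper stays at the level of the scaled multigroup PDE: it reads $I_g^{(0)}=B_g^{(0)}$ off the $O(1)$ balance, substitutes the tilted source into the $O(\varepsilon)$ relation, and integrates the $O(\varepsilon^2)$ equation over the cell and angle. You instead argue at the level of the solver: the equilibrium property comes from the Duhamel splitting \eqref{eq_Duhammel_A}--\eqref{eq_Duhammel_B} (census particles decay like $e^{-c\sigma_g\Delta t/\varepsilon^2}$ while the emission kernel concentrates within $O(\varepsilon)$ of the evaluation point), and the limiting diffusion equation is extracted from the exact discrete energy identity realized in expectation by the tallies \eqref{eq_def_EA} and \eqref{eq_tally_Tm}, with an induction on $n$ to propagate the local-equilibrium hypothesis. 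This buys a more faithful account of what the particle estimators actually compute, and it explicitly discharges the equilibrium condition $\sum_g\rho_{g,i}^{n,(0)}=\sum_g b_{g,i}^{n,(0)}\phi_i^{n,(0)}$ that the preceding proposition deferred; the price is the boundary-layer issue you flag yourself in Step 2, which the paper's purely formal treatment sidesteps by never leaving the PDE level. Both arguments are formal and consistent with each other.
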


\begin{proof}
Consider the systems
    \begin{subequations}
		\begin{align}\label{eq_asymptoticAnalysis3A}
            &\begin{aligned}
                \frac{\varepsilon^2}{c} \frac{\partial I_g}{\partial t}+\varepsilon\mu \pdrv{I_g}{x}  = \sigma_{g} B_g-\sigma_{g}I_g,  \quad g=1,\ldots,G, 
            \end{aligned}
            \\
		  &\begin{aligned}\label{eq_asymptoticAnalysis3B}
                \varepsilon^2 C_v \frac{\partial T}{\partial t}=\sum_{g=1}^{G} \sigma_{g} \left( \rho_g- 4 \pi B_g  \right) . 
            \end{aligned}
		\end{align}
   \end{subequations} 
  We now perform a Chapman-Enskog expansion and compare terms that are the same order in $\varepsilon$.

   The $O(1)$ equation for \eqref{eq_asymptoticAnalysis3A} is
    \begin{equation}\label{eq_asymptoticAnalysis3_O1A}
        I_g^{(0)} = B_g^{(0)},  \quad g=1,\ldots,G, 
    \end{equation}
    Integrating \eqref{eq_asymptoticAnalysis3_O1A} over the angular variables yields 
      \begin{equation}\label{eq_asymptoticAnalysis3_O1B}
        \rho_g^{(0)} =  4 \pi B_g^{(0)},  \quad g=1,\ldots,G. 
    \end{equation}
    The $O(\varepsilon)$ equation for \eqref{eq_asymptoticAnalysis3A} is
    \begin{equation}\label{eq_asymptoticAnalysis3_O2A}
        \mu \pdrv{I_g^{(0)}}{x}  + \sigma_{g}I_g^{(1)} = \sigma_{g} B_g^{(1)} ,  \quad g=1,\ldots,G, 
    \end{equation}
    substituting \eqref{eq_asymptoticAnalysis3_O1A} into \eqref{eq_asymptoticAnalysis3_O2A} , we can get
    \begin{equation} \label{eq_asymptoticAnalysis3_O2B}
        \begin{aligned}
                I_g^{(1)} = - \frac{1}{ \sigma_g} \mu \pdrv{B_g^{(0)}}{x}+  B_g^{(1)} ,  \quad g=1,\ldots,G. 
        \end{aligned}
    \end{equation}
    The $O(\varepsilon^2)$ equation for \eqref{eq_asymptoticAnalysis3A} is
    \begin{equation}\label{eq_asymptoticAnalysis3_O3A}
        \frac{1}{c} \frac{\partial I_g^{(0)}}{\partial t}+\mu \pdrv{I_g^{(1)}}{x}  = \sigma_{g} B_g^{(2)}-\sigma_{g}I_g^{(2)},  \quad g=1,\ldots,G, 
    \end{equation}
       The $O(\varepsilon^2)$ equation for \eqref{eq_asymptoticAnalysis3B} is
        \begin{equation}\label{eq_asymptoticAnalysis3_O3B}
        C_v \frac{\partial }{\partial t} T^{(0)} =\sum_{g=1}^{G} \sigma_{g} \left( \rho_g^{(2)}- 4 \pi B_g^{(2)}  \right), 
    \end{equation}

    Up to now, the asymptotic analysis was performed without considering  the discretized formulation of the emission source $(B_{tilt})_{g,i}^{(0)}$.
      From the tilting source definition in \eqref{eq_sourceTilting}, we obtain the second-order accurate approximation:
        \begin{equation}
      	B_{g,i}^{(0)} = (B_{tilt})_{g,i}^{(0)} + O(\Delta x^2).
      \end{equation}
      Consequently, the $O(\varepsilon)$ equation in \eqref{eq_asymptoticAnalysis3A} takes the modified form:
    \begin{equation*}
        \begin{aligned}
                I_{g,i}^{(1)} 
                &=- \frac{1}{ \sigma_g} \mu \pdrv{(B_{tilt})_{g,i}^{(0)} }{x}+  B_{g,i}^{(1)}
                \\
                &=   B_{g,i}^{(1)} -
                \begin{cases} 
                  \frac{1}{ \sigma_g} \mu		\frac{B^{(0)}_{g,i  } - B^{(0)}_{g,i-1}}{ \Delta x} \left( x - x_i \right), & \text{if } \mu < 0, \\
                  \frac{1}{ \sigma_g} \mu		\frac{B^{(0)}_{g,i + 1} - B^{(0)}_{g,i}}{ \Delta x} \left( x - x_i \right), & \text{if } \mu > 0.
                \end{cases}
        \end{aligned}
    \end{equation*}
By integrating the $O(\varepsilon^2)$ equation \eqref{eq_asymptoticAnalysis3_O3A} over the cell $V_i$ and angle, then summing over all groups while applying \eqref{eq_asymptoticAnalysis3_O1B} and \eqref{eq_asymptoticAnalysis3_O3B}, we obtain
   \begin{equation}
        \frac{1}{c} \frac{\partial }{\partial t} \left( \sum_{g=1}^G 4 \pi B_{g,i}^{(0)} \right)+C_v \frac{\partial }{\partial t} T^{(0)} = -\frac{1}{\Delta x} \sum_{g=1}^G (F_{g,i+\frac{1}{2}}^{(1)} - F_{g,i-\frac{1}{2}}^{(1)}  ) , 
    \end{equation}
with the flux $F_{g,i+\frac{1}{2}}^{(1)}$ given by:
    \begin{equation*}
        \begin{aligned}
              F_{g,i+\frac{1}{2}}^{(1)} &= 2\pi \int_{-1}^{1} \mu I_{g,i+\frac{1}{2}}^{(1)} \dd{\mu}
              \\
              & =  2\pi \Big( \int_{-1}^{0} \frac{\mu^2}{ \sigma_{g,i+1}}  \pdrv{(B_{tilt})_{g,i+1}^{(0)}}{x} \dd{\mu}  +  \int_{0}^{1} \frac{\mu^2}{ \sigma_{g,i}}  \pdrv{(B_{tilt})_{g,i}^{(0)}  }{x} \dd{\mu}  \Big)
              \\
              & = - 2 \pi \Big( \int_{-1}^{0} \frac{ \mu^2}{\sigma_{g,i+1}}  \frac{B^{(0)}_{g,i + 1} - B^{(0)}_{g,i}}{\Delta x} \dd{\mu}  +  \int_{0}^{1} \frac{ \mu^2}{\sigma_{g,i }}  \frac{B^{(0)}_{g,i + 1} - B^{(0)}_{g,i}}{ \Delta x} \dd{\mu}  \Big)
              \\
              & = - \frac{4 \pi}{3 \sigma_{g,i+\frac{1}{2}}} \frac{B^{(0)}_{g,i + 1}- B^{(0)}_{g,i}}{\Delta x},
        \end{aligned}
    \end{equation*}
where $\sigma_{g,i+\frac{1}{2}}$ is the  harmonic average of  $\sigma_{g,i}$. Substituting the relation $\sum_{g=1}^G 4 \pi B_{g,i}^{(0)} = ac (T_{i}^{(0)} )^4$, we have 
    \begin{equation}
 	a \frac{\partial  }{\partial t} (T_{i}^{(0)} )^4 +C_v \frac{\partial }{\partial t} T_i^{(0)} = \frac{1}{\Delta x} \sum_{g=1}^G \left(\frac{4 \pi}{3 \sigma_{g,i+\frac{1}{2}}} \frac{B_{g,i + 1}^{(0)}- B_{g,i}^{(0)}}{\Delta x} - \frac{4 \pi}{3 \sigma_{g,i-\frac{1}{2}}} \frac{B_{g, i}^{(0)}- B_{g,i-1}^{(0)}}{\Delta x} \right).
 \end{equation}
    Therefore, a consistent discretization for \eqref{eq_def_diffLimit} is obtained.
\end{proof}

\section{Numerical Results}
\label{sec:results}
In this section, we present a series of numerical experiments to demonstrate the capability and effectiveness of the proposed method, termed Effective Monte Carlo (EMC). We compare EMC with the state-of-the-art Implicit Monte Carlo (IMC) method \cite{fleck1971implicit}. It is worth noting that while numerous acceleration techniques, variance reduction strategies, and code optimizations have been developed for IMC, our comparisons are limited to the version incorporating only \textit{continuous energy deposition} as described in \cite{fleck1971implicit}. For each numerical test, the IMC method employs the same time step, mesh size, and number of particles as the EMC method.

In the following examples, the units are defined as follows: length in centimeters (cm), time in nanoseconds (ns), temperature in kilo electron-volts (keV), and energy in $10^9$ Joules (GJ). Under these units, the speed of light $c$ is $29.98 \mathrm{~	cm} / \mathrm{ns}$ and the radiation constant $a$ is $0.01372 \, \mathrm{GJ} / \mathrm{cm}^3/ \mathrm{keV}^4$. All numerical tests use $2,000,000$ MC particles unless  otherwise specified. The reference CFL numerber is determined by 
\begin{equation*}
	 \text{CFL}=  \frac{c \Delta t }{\min{(\Delta x,\Delta y)}}.
\end{equation*}

\subsection{Infinite medium problem (Test of variance)} \label{Pb.1}
In this example, we compare the Figure of Merit (FOM) between the the EMC and IMC methods. The problem setup is similar to that described in \cite{mcclarren2009modified}. The FOM for Monte Carlo simulations is defined as
\begin{equation*}
    \text{FOM} = \frac{1}{\text{Var} \cdot t},
\end{equation*}
where $\text{Var}$ represents the variance of the estimate and $t$ denotes the CPU time. A higher FOM indicates better computational efficiency, corresponding to lower variance and reduced computation time.

We consider a steady-state, infinite medium problem with an initial equilibrium temperature of $T_{m, 0} = T_{r, 0} = 1.0 \, \text{keV}$. The system is modeled as a one-dimensional slab of thickness $1.0 \, \text{cm}$ with reflecting boundary conditions. The opacity is defined by
\begin{equation*}
	\sigma = \frac{300}{T^3}\,\text{cm}^{-1},
\end{equation*}
and the heat capacity is given by
\begin{equation*}
	C_v = 0.3 \, \text{GJ/keV/cm}^3.
\end{equation*}
The exact solution predicts that the medium remains at its initial temperature throughout the simulation. The computational domain is discretized into 50 uniform spatial cells, using a fixed time step of $\Delta t = 0.0025 \, \text{ns}$ ( $\text{CFL} \approx 15 $  ) .  The simulation is run until a final time of $t = 1.0 \,\text{ns}$.

The spatial variations of material and radiation temperatures at $t = 1.0\,\text{ns}$ are shown in \autoref{FOM_tempre} for both the EMC and IMC methods. It can be seen that the variations in the EMC method are more pronounced  than those in the IMC method. This is attributed to the absence of effective scattering in EMC. Nevertheless, the EMC method is significantly faster due to shorter particle lifetimes. To reach the simulation time of $t = 1.0\,\text{ns}$, the IMC method requires $669 \,\text{s}$, whereas the EMC method only takes $53 \,\text{s}$ as shown in \autoref{tabel_Pb1}. The FOMs for both methods are presented in \autoref{FOM}, showing that EMC achieves substantially higher FOMs for both material and radiation temperatures.

\begin{figure}[!htb] 
    \centering
    \includegraphics[width=0.5\textwidth]{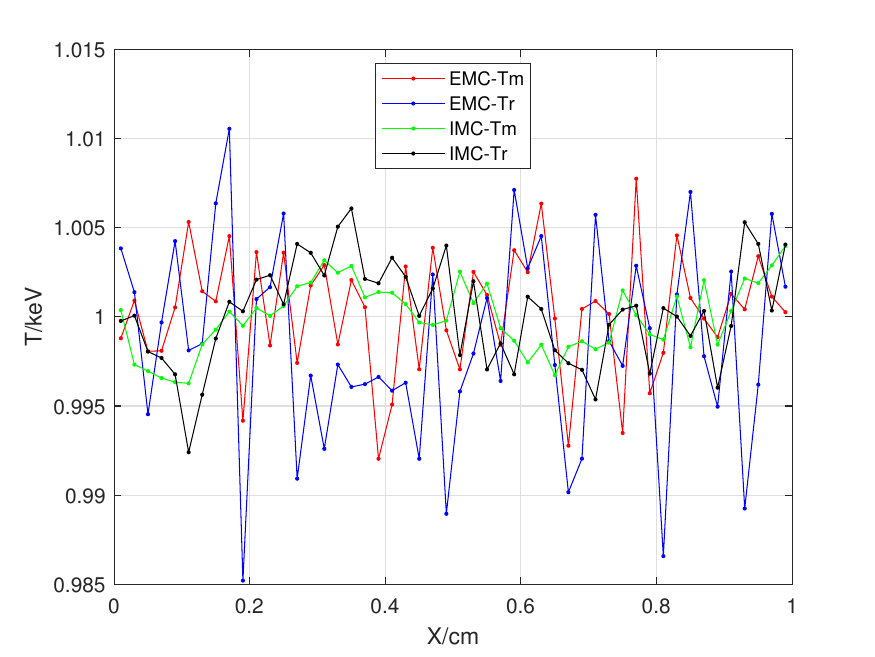}
    \caption{Comparison of the material and radiation temperatures using the EMC and IMC methods  at $t = 1.0\,\text{ns}$, with $\Delta t =0.0025\,\text{ns}$ ($\text{CFL} \approx 15 $) for the infinite medium problem.} 
    \label{FOM_tempre}
\end{figure}

\begin{figure}[!htb]
    \centering
    \includegraphics[width=0.5\textwidth]{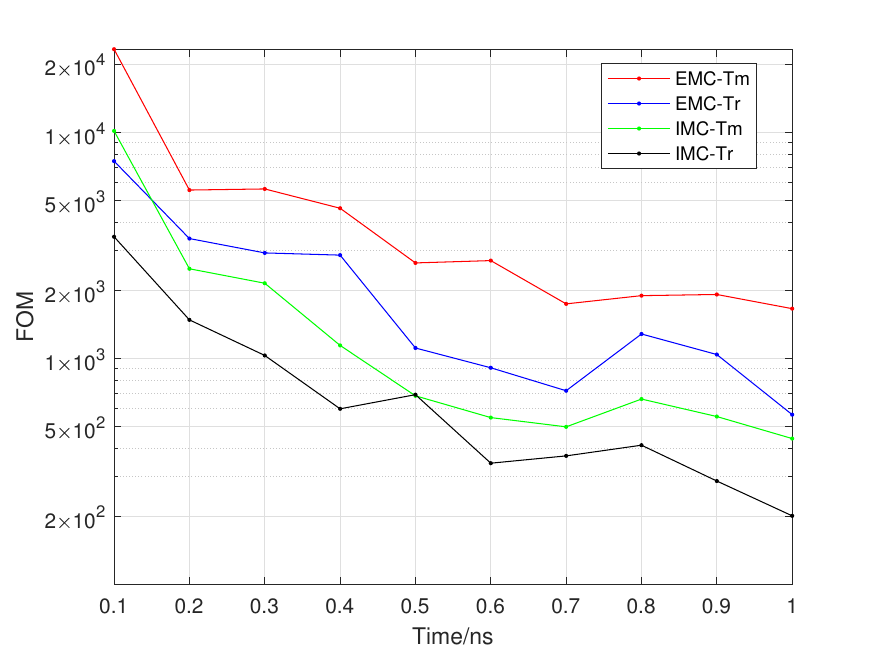}
    \caption{Figure of Merit for the material and radiation temperatures using the EMC and IMC methods.} 
    \label{FOM}
\end{figure}

\begin{table}[!htb]
	\centering
	\begin{tabular}{lrr}
		\toprule
		\textbf{Example \ref{Pb.1}} & \textbf{EMC (s)} & \textbf{IMC (s)} \\
		\midrule
		FOM & 54  & 606  \\
		\bottomrule
	\end{tabular}
	\caption{Comparison of CPU time using the EMC and IMC methods for Example \ref{Pb.1} (in seconds).}
	\label{tabel_Pb1}
\end{table}

\subsection{Marshark wave problems }\label{Pb.2}
For this example, we consider the frequency-dependent Marshak wave problems \cite{densmore2011asymptotic,steinberg2023frequency,sun2015asymptoticB,shi2023maximum}. These consist of several one-dimensional cases with varying optical depths: optically thin, optically thick, and a combination of both.

In all test problems, the initial temperature is in equilibrium, given by  $T_{r,0} = T_{m,0} = 10^{-3} \,$\text{keV}. The frequency-dependent opacity in each region is given by  
\begin{equation*}
    \sigma(x,\nu,T) = \frac{\sigma_0(x)}{(h\nu)^3\sqrt{kT}}\,\text{cm}^{-1},
\end{equation*}  
and the heat capacity is set to  
\begin{equation*}
    C_v = 0.1 \,\text{GJ/keV/cm}^3.
\end{equation*}  
To represent the frequency-dependent opacity, we employ 25 frequency groups spaced logarithmically between $10^{-3}\,\text{keV}$ and $100 \,\text{keV}$.

 At the left boundary, the incident intensity follows a Planckian distribution with a temperature of $1.0\,\text{keV}$, while a reflective boundary condition is applied at the right boundary. The simulation is run until a final time of $t = 1.0 \,\text{ns}$.

\subsubsection{Homogeneous problems}  
Two homogeneous test cases are considered in a computational domain of thickness $5.0\,\text{cm}$, with opacity values given by  
\begin{equation*}
    \sigma_0 = 10 \, \text{keV}^{7/2} / \text{cm}, \quad
    \sigma_0 = 1000 \, \text{keV}^{7/2} / \text{cm}.
\end{equation*}  
The spatial domain is discretized using a uniform mesh with a cell size of $\Delta x = 0.005$ cm.  The time step is set to $\Delta t = 0.0025 \,\text{ns}$ ( $\text{CFL} \approx 15 $ ). \autoref{Marshark_Ho_10} and \autoref{Marshark_Ho_1000} present the material and radiation temperatures computed using both the EMC and IMC methods. The results show good agreement between the two approaches. As shown in \autoref{Marshark_Ho_1000}, which corresponds to the optically thick case,  the temperature profiles are noisier. However, EMC demonstrates significantly better computational efficiency compared to IMC in the optically thick regime, as indicated in \autoref{tabel_Pb2}.

\begin{figure}[!htb]
    \centering
    \begin{subfigure}{0.49\textwidth}
        \centering
       \includegraphics[width=0.9\linewidth, height=6cm]{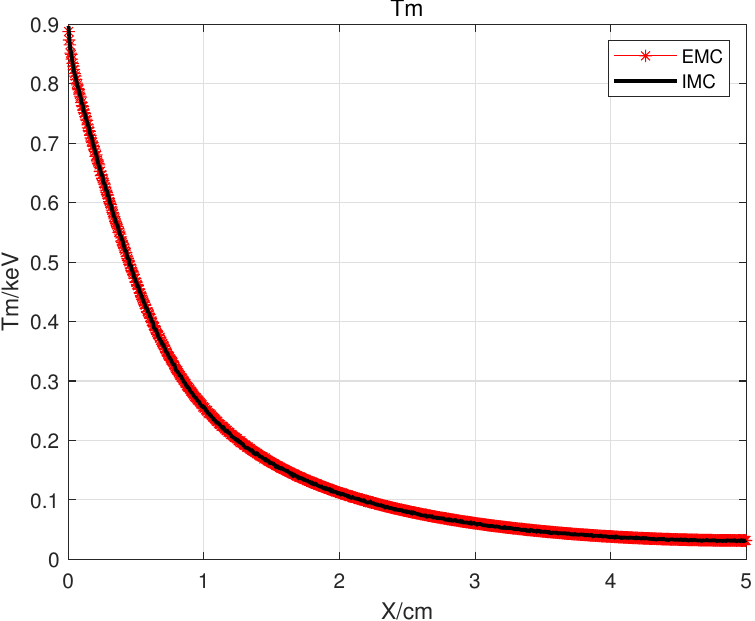}
        \subcaption{Material temperature.}
    \end{subfigure}
    \hfill
    \begin{subfigure}{0.49\textwidth}
        \centering
        \includegraphics[width=0.9\linewidth, height=6cm]{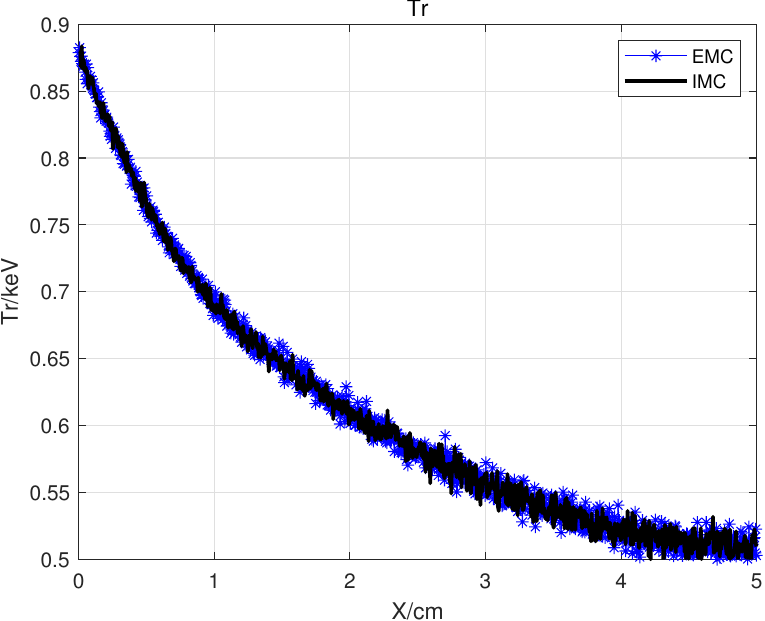}
        \subcaption{Radiation temperature.}
    \end{subfigure}
    \caption{Comparisons of the material and radiation temperatures using the EMC and IMC methods at $t = 1.0\,\text{ns}$, with $\Delta t =0.0025\,\text{ns}$ ($\text{CFL} \approx 15 $) for homogeneous  Marshark wave problem when $\sigma_0 = 10 \, \mathrm{keV}^{7 / 2} / \mathrm{cm}$.}
    \label{Marshark_Ho_10}
\end{figure}

\begin{figure}[!htb]
    \centering
    \begin{subfigure}{0.49\textwidth}
        \centering
        \includegraphics[width=0.9\linewidth, height=6cm]{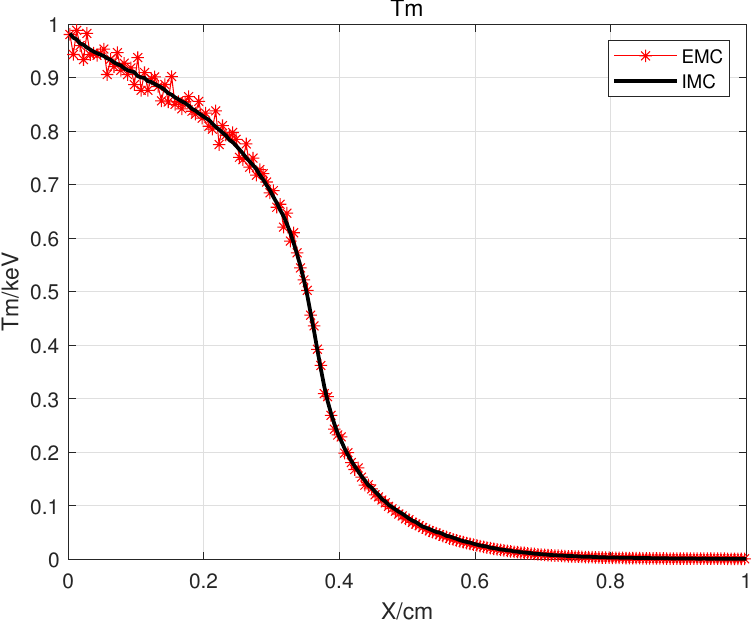}
        \subcaption{Material temperature.}
    \end{subfigure}
    \hfill
    \begin{subfigure}{0.49\textwidth}
        \centering
        \includegraphics[width=0.9\linewidth, height=6cm]{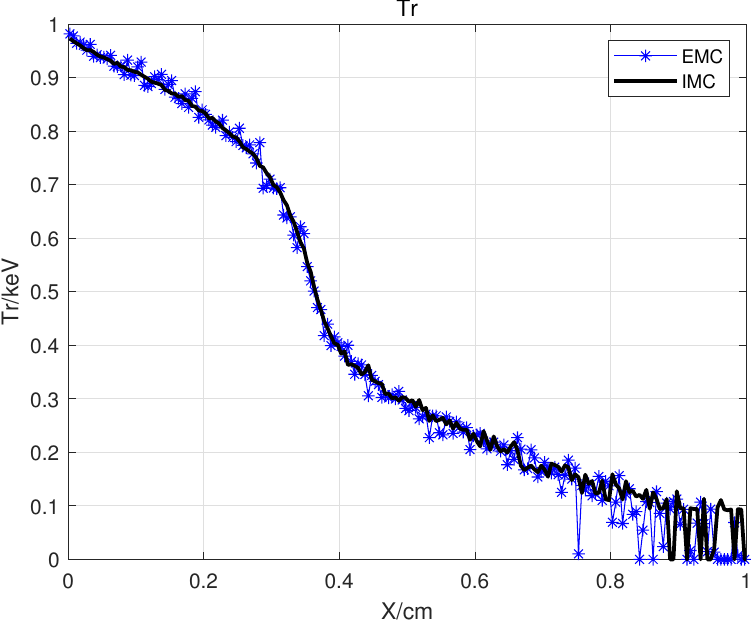}
        \subcaption{Radiation temperature.}
    \end{subfigure}
    \caption{Comparisons of the material and radiation temperatures using the EMC and IMC methods at $t = 1.0\,\text{ns}$, with $\Delta t =0.0025\,\text{ns}$ ($\text{CFL} \approx 15 $) for homogeneous  Marshark wave problem when $\sigma_0 = 1000 \, \mathrm{keV}^{7 / 2} / \mathrm{cm}$.}
    \label{Marshark_Ho_1000}
\end{figure}

\subsubsection{Heterogeneous problem A}  
The opacity profile in the computational domain is given by  
\begin{equation*}
    \sigma_0(x) = \begin{cases} 
        10 \,\text{keV}^{7/2} / \text{cm}, & 0 \,\text{cm} < x < 2 \,\text{cm}, \\
        1000 \,\text{keV}^{7/2} / \text{cm}, & 2 \,\text{cm} < x < 3 \,\text{cm}.
    \end{cases}
\end{equation*}  
The thickness of the computational domain is $3.0\,\text{cm}$, and the spatial mesh size is  
\begin{equation*}
    \Delta x = \begin{cases} 
        0.02 \,\text{cm}, & 0 \,\text{cm} < x < 2 \,\text{cm}, \\
        0.005 \,\text{cm}, & 2 \,\text{cm} < x < 3 \,\text{cm}.
    \end{cases}
\end{equation*}  
The time step is set to $\Delta t = 0.00125 \,\text{ns}$ ( $\text{CFL} \approx 8 $ ). This test problem evaluates the ability of our methods to handle a sharp transition from an optically thin to an optically thick regime. The simulation runs up to a final time of $1.0 \,\text{ns}$. \autoref{Marshark_He_A} presents the EMC and IMC results, which show good agreement. 
\begin{figure}[!htb]
    \centering
    \begin{subfigure}{0.49\textwidth}
        \centering
        \includegraphics[width=0.9\linewidth, height=6cm]{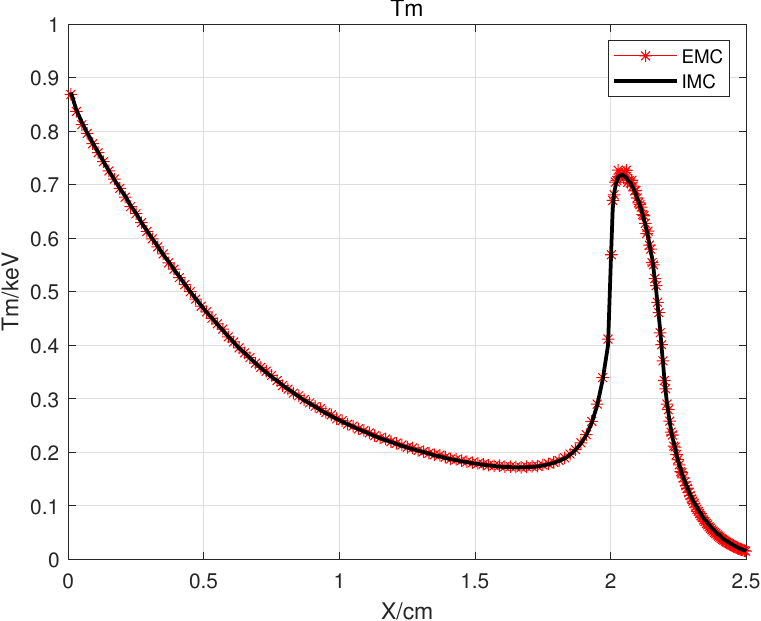}
        \subcaption{Material temperature.}
    \end{subfigure}
    \hfill
    \begin{subfigure}{0.49\textwidth}
        \centering
        \includegraphics[width=0.9\linewidth, height=6cm]{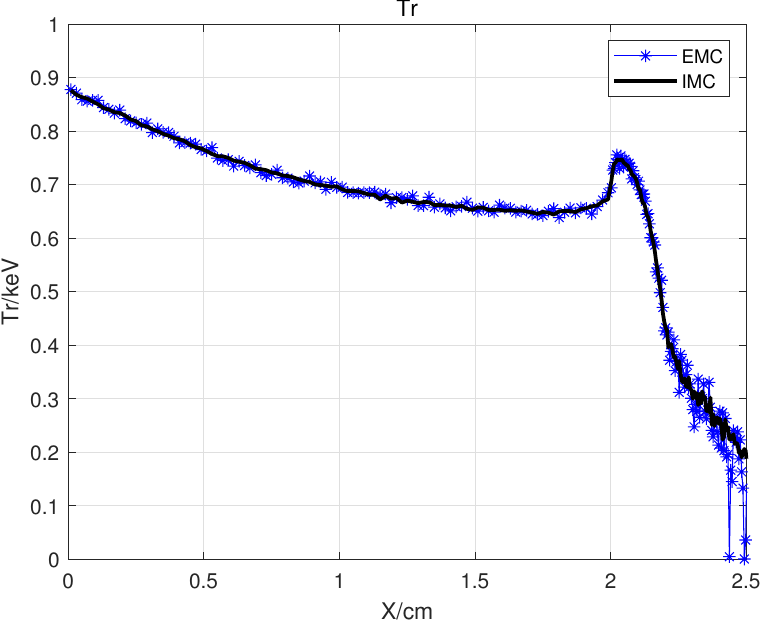}
        \subcaption{Radiation temperature.}
    \end{subfigure}
    \caption{Comparisons of the material and radiation temperatures using the EMC and IMC methods at $t = 1.0\,\text{ns}$, with $\Delta t =0.00125\,\text{ns}$ ($\text{CFL} \approx 8 $) for heterogeneous  Marshark wave problem A.}
    \label{Marshark_He_A}
\end{figure}

\subsubsection{Heterogeneous problem B}  
 The opacity profile in the computational domain is given by  
\begin{equation*}
    \sigma_0(x) = \begin{cases} 
        1000 \,\text{keV}^{7/2} / \text{cm}, & 0 \,\text{cm} < x < 0.5 \,\text{cm}, \\
        10 \,\text{keV}^{7/2} / \text{cm}, & 0.5 \,\text{cm} < x < 1.5 \,\text{cm}.
    \end{cases}
\end{equation*}  
The thickness of the computational domain is $1.5\,\text{cm}$, and the spatial mesh size is  
\begin{equation*}
    \Delta x = \begin{cases} 
        0.005 \,\text{cm}, & 0 \,\text{cm} < x < 0.5 \,\text{cm}, \\
        0.02 \,\text{cm}, & 0.5 \,\text{cm} < x < 1.5 \,\text{cm}.
    \end{cases}
\end{equation*}  
The time step is set to $\Delta t = 0.00125 \, \text{ns}$ (with $\text{CFL} \approx 8$). This problem assesses the capability of our methods to handle a sharp transition from an optically thick to an optically thin regime over a long simulation time.  The simulation runs up to a final time of $5.0\, \text{ns}$, making it particularly challenging due to the extended duration. Nevertheless, \autoref{Marshark_He_B} demonstrates strong agreement between the two approaches, especially near the thick-to-thin interface. Although the temperature exhibits more noise in the optically thick regime, \autoref{tabel_Pb2} demonstrates that EMC achieves excellent computational efficiency.

\begin{figure}[!htb]
	\centering
	\begin{subfigure}{0.49\textwidth}
		\centering
		\includegraphics[width=0.9\linewidth, height=6cm]{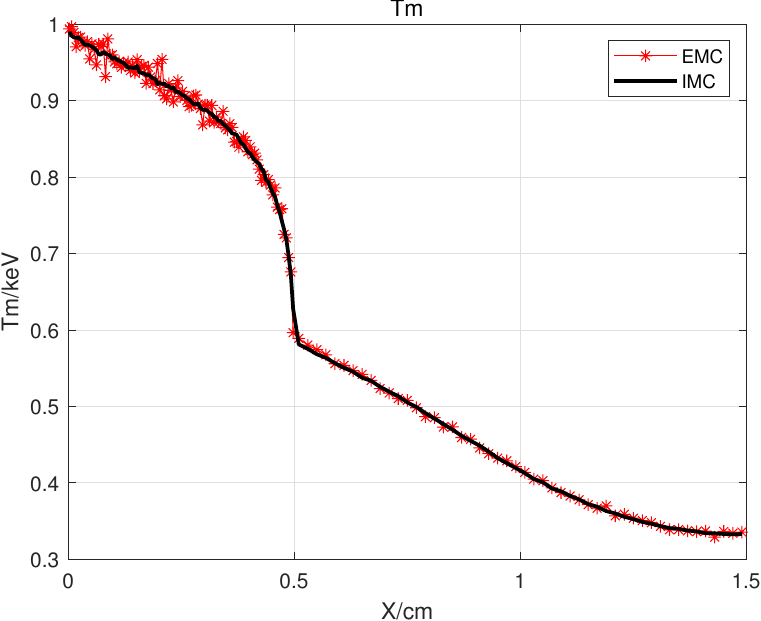}
		\subcaption{Material temperature.}
	\end{subfigure}
	\hfill
	\begin{subfigure}{0.49\textwidth}
		\centering
		\includegraphics[width=0.9\linewidth, height=6cm]{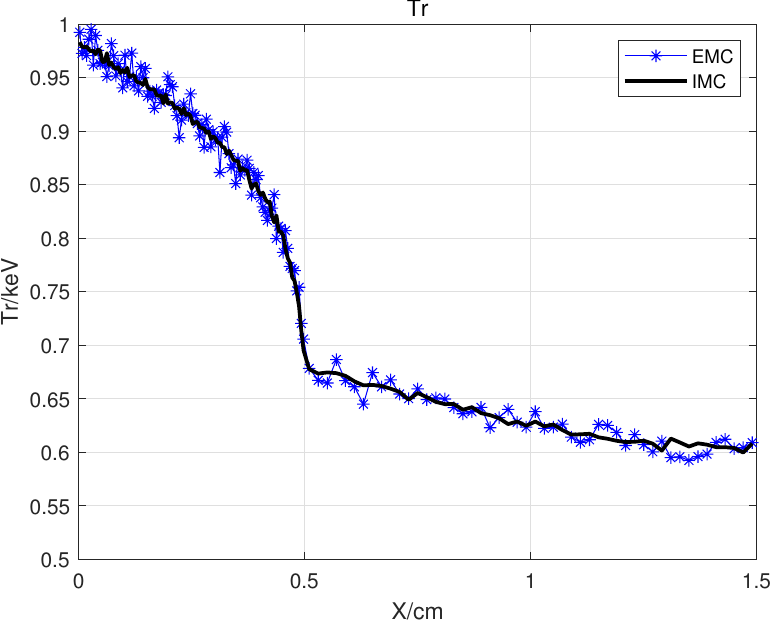}
		\subcaption{Radiation temperature.}
	\end{subfigure}
	\caption{Comparisons of the material and radiation temperatures using the EMC and IMC methods at $t = 1.0\,\text{ns}$, with $\Delta t =0.00125\,\text{ns}$ ($\text{CFL} \approx 8 $) for heterogeneous  Marshark wave problem B.}
	\label{Marshark_He_B}
\end{figure}

\begin{table}[!htb]
	\centering
	\begin{tabular}{lrr}
		\toprule
		\textbf{Example \ref{Pb.2}} & \textbf{EMC (s)} & \textbf{IMC (s)} \\
		\midrule
		$\sigma_0 = 10$        & 146   & 293    \\
		$\sigma_0 = 1000$      & 152   & 9125   \\
		Heterogeneous A        & 160   & 3008   \\
		Heterogeneous B        & 930   & 32762  \\
		\bottomrule
	\end{tabular}
	\caption{Comparison of CPU time using the EMC and IMC methods for Example \ref{Pb.2} (in seconds).}
	\label{tabel_Pb2}
\end{table}

    \subsection{Larsen’s problem }\label{Pb.3}
  Next, we consider the frequency-dependent, multi-material Larsen’s problem \cite{larsen1988grey, sun2015asymptoticA, shi2023maximum}. To model the frequency-dependent opacity, we employ 50 frequency groups spaced logarithmically between $10^{-5} \,\text{keV} $ and $10 \,\text{keV} $. The frequency-dependent opacity in each region is given by  
\begin{equation*}
    \sigma(x,\nu, T) = \sigma_0(x) \frac{1 - e^{-h \nu / k T}}{(h \nu)^3}\,\text{cm}^{-1},
\end{equation*}  
where   the spatially varying coefficient $\sigma_0(x)$ is given by
\begin{equation*}
    \sigma_0(x) = \begin{cases} 
        1 \,\text{keV}^{2/7} / \text{cm}, & 0 < x < 2 \,\text{cm}, \\  
        1000 \,\text{keV}^{2/7} / \text{cm}, & 2 < x < 3 \,\text{cm}, \\  
        1 \,\text{keV}^{2/7} / \text{cm}, & 3 < x < 4 \,\text{cm}.
    \end{cases}
\end{equation*}  
The heat capacity is specified as
\begin{equation*}
    C_v = 0.05109 \,\text{GJ/keV/cm}^3.
\end{equation*}  

The system is initialized in thermal equilibrium, with both radiation and material temperatures set to $T_{r,0} = T_{m,0} = 10^{-3} \,\text{keV} $. A Planckian surface source at $1.0 \,\text{keV} $  is applied at the left boundary to initiate the transient. The simulation runs until a final time of $0.9\,\text{ns}$. The spatial domain is divided into three regions with non-uniform mesh sizes:
\begin{equation*}
    \Delta x = \begin{cases} 
        0.2 \,\text{cm}, & 0 < x < 2 \,\text{cm}, \\  
        0.02 \,\text{cm}, & 2 < x < 3 \,\text{cm}, \\  
        0.1 \,\text{cm}, & 3 < x < 4 \,\text{cm}.
    \end{cases}
\end{equation*}  
The fixed time step $\Delta t = 0.005\,\text{ns}$ ($  \text{CFL} \approx 8 $) is used throughout the simulation. 

This problem evaluates the ability of our methods to accurately capture the sharp transition from an optically thick to an optically thin regime. The simulation runs up to a final time of $0.9\, \text{ns}$. Nevertheless, \autoref{Larsen} demonstrates strong agreement between the two approaches, especially near the thick-to-thin interface. \autoref{tabel_Pb3} demonstrates that EMC achieves excellent computational efficiency.

\begin{figure}[!htb]
    \centering
    \begin{subfigure}{0.49\textwidth}
        \centering
        \includegraphics[width=0.9\linewidth, height=6cm]{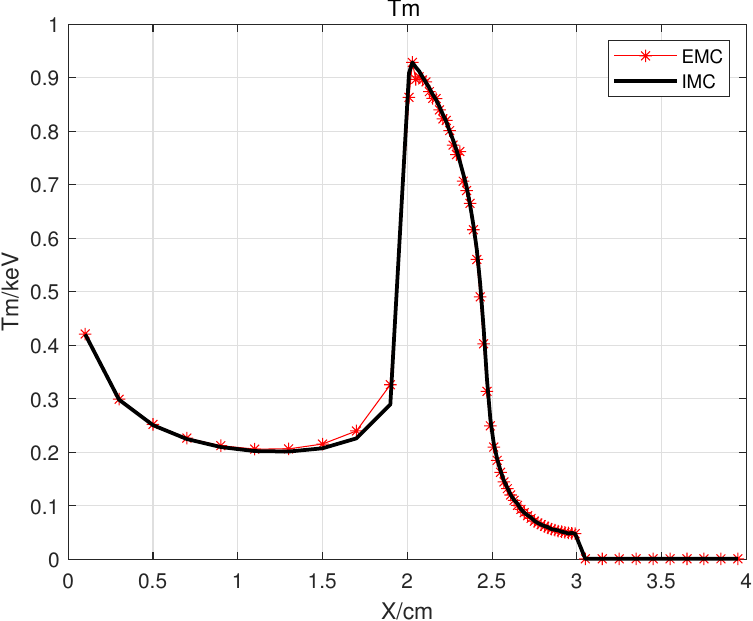}
        \subcaption{Material temperature.}
    \end{subfigure}
    \hfill
    \begin{subfigure}{0.49\textwidth}
        \centering
        \includegraphics[width=0.9\linewidth, height=6cm]{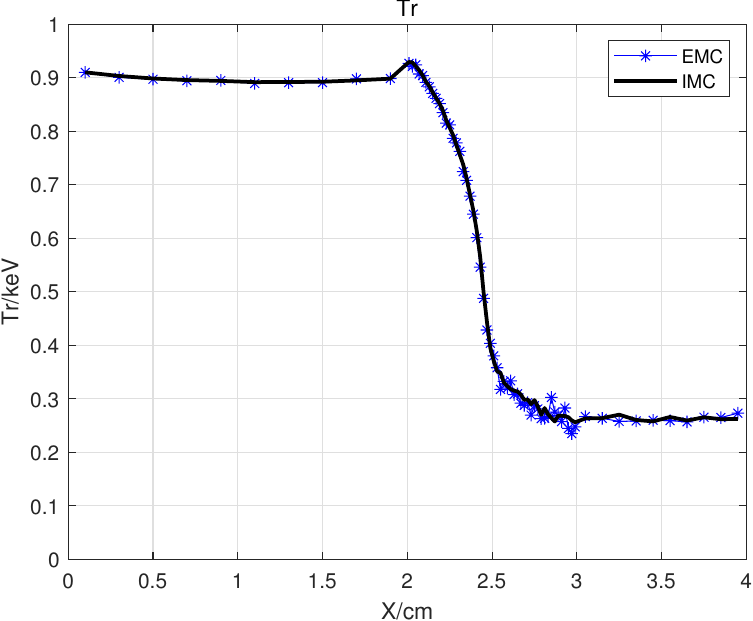}
        \subcaption{Radiation temperature.}
    \end{subfigure}
    \caption{Comparisons of the material and radiation temperatures using the EMC and IMC methods at $t = 0.9\,\text{ns} $, with $\Delta t =0.005\,\text{ns} $ ($  \text{CFL} \approx 8 $)  for Larsen’s problem.}
    \label{Larsen}
\end{figure}

\begin{table}[!htb]
	\centering
	\begin{tabular}{lrr}
		\toprule
		\textbf{Example \ref{Pb.3}} & \textbf{EMC (s)} & \textbf{IMC (s)} \\
		\midrule
		Larsen's Pb. & 24  & 404  \\
		\bottomrule
	\end{tabular}
		\caption{Comparison of CPU time using the EMC and IMC methods for Example \ref{Pb.3} (in seconds).}
	\label{tabel_Pb3}
\end{table}

\subsubsection{Test of different weight functions} \label{weightFunction_compared}
We use Larsen's problem to to demonstrate the necessity of modifying the original model using \eqref{eq_formal_solution2} and the fact that the proposed scheme is generally insensitive to the specific choice of the weight function $\theta_g$. The candidate weight functions considered are $\theta_g = e^{-c\sigma_g (t - t^n)}$ and $\theta_g = 1 - e^{-1 / \left(c\sigma_g (t - t^n) \right)}$, referred to as EMC1 and EMC2, respectively.  When the time step is  $\Delta t = 0.005\,\text{ns}$ ($  \text{CFL} \approx 8 $), the results shown in \autoref{Larsen_compared} indicate that the solutions obtained with EMC1 and EMC2 are in close agreement. In contrast, using $\theta_g = 0$, which  corresponds to the unmodified model given in \eqref{eq_approximaed_nonModifed}, leads to failure in the convergence of the nonlinear iteration for the macroscopic system.

\begin{figure}[!htb]
    \centering
    \begin{subfigure}{0.49\textwidth}
        \centering
        \includegraphics[width=0.9\linewidth, height=6cm]{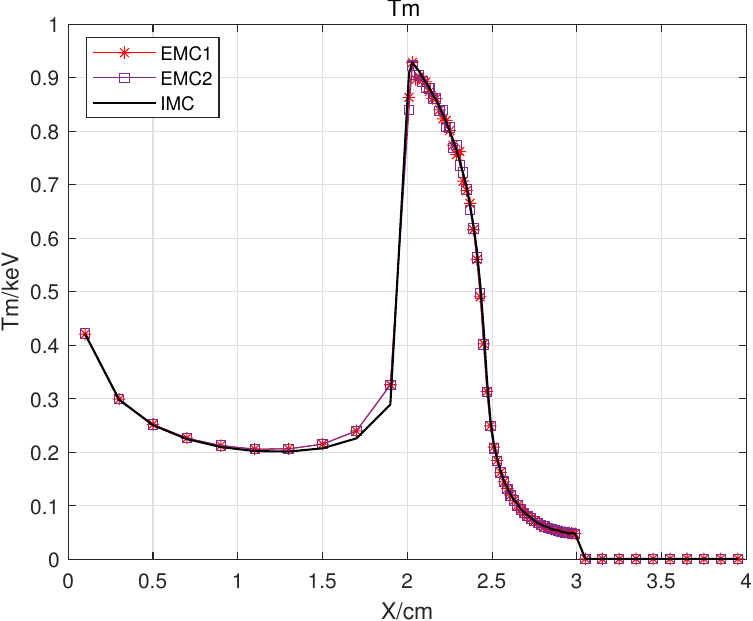}
        \subcaption{Material temperature.}
    \end{subfigure}
    \hfill
    \begin{subfigure}{0.49\textwidth}
        \centering
        \includegraphics[width=0.9\linewidth, height=6cm]{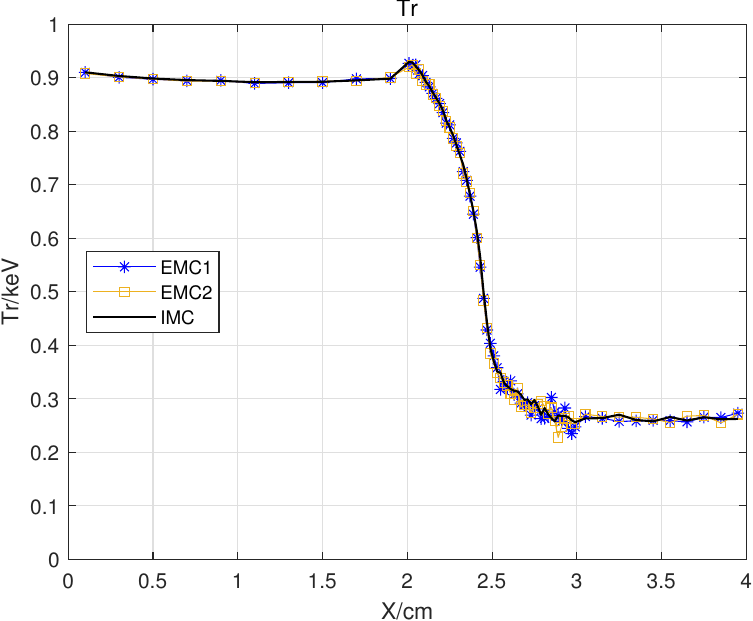}
        \subcaption{Radiation temperature.}
    \end{subfigure}
    \caption{Comparisons of the material and radiation temperatures using the EMC method with different weight functions and the IMC method at $t = 0.9\,\text{ns} $, with $\Delta t =0.005\,\text{ns} $ ($  \text{CFL} \approx 8 $)  for Larsen’s problem.}
    \label{Larsen_compared}
\end{figure}

 \subsection{Frequency-dependent hohlraum problem}\label{Pb.4}
 \begin{figure}[!htb]
 	\centering
 	\includegraphics[width=0.5\textwidth]{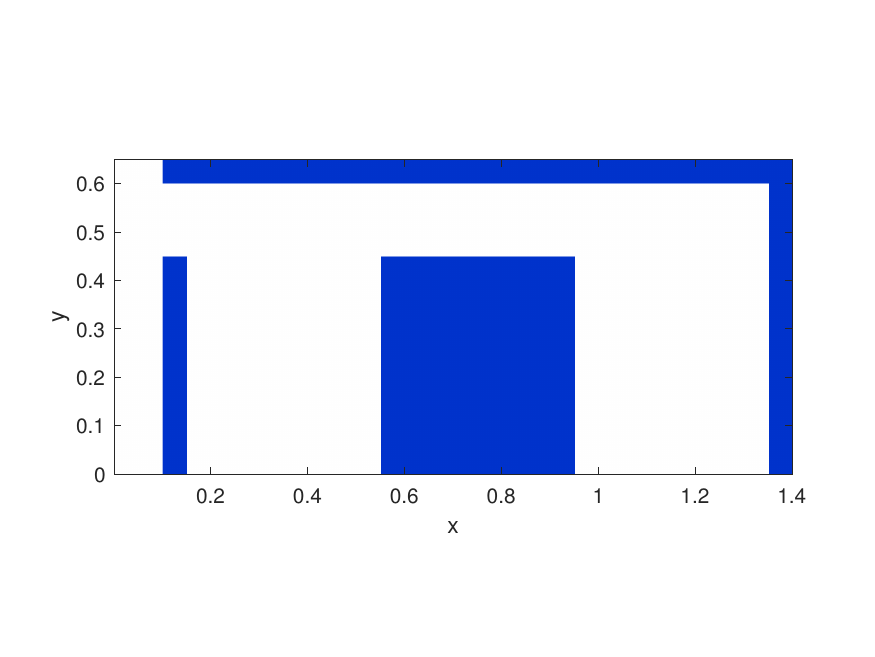}
 	\caption{The hohlraum problem. The blue regions are where $(x, y) \in[0.1,0.15] \times[0,0.45]$, and $(x, y) \in[0.55,0.95] \times[0,0.45],(x, y) \in[0.1,1.4] \times[0.6,0.65]$, and $(x, y) \in[1.35,1.4] \times[0,0.65]$.}
 \label{FRTE_hohlraum_layout}
 \end{figure}
 
For the final example, we study the hohlraum problem for the frequency-dependent radiative transfer equations. The setup of this problem is similar as that studied in \cite{li2024unified,Hammer2019}. To represent the frequency-dependent opacity, our method employs 50 frequency groups spaced logarithmically between $10^{-5} \,\text{keV} $ and $10 \,\text{keV} $.

The layout of the problem is illustrated in \autoref{FRTE_hohlraum_layout}. The computational domain spans  
$[0\,\text{cm}, 1.4\,\text{cm}] \times [0\,\text{cm}, 0.65\,\text{cm}]$,
where the white regions represent near-vacuum. We assume an absorption coefficient of  
\begin{equation*}
    \sigma = 10^{-8} \,\text{cm}^{-1},
\end{equation*}  
and a specific heat capacity of  
\begin{equation*}
    C_v = 10^{-4} \,\text{GJ/keV/cm}^3,
\end{equation*}  
for the white regions. The blue regions are filled with material that follows the frequency-dependent opacity relation  
\begin{equation*}
    \sigma(x,\nu, T) = 1000 \frac{1 - e^{-h \nu / k T}}{(h \nu)^3}\,\text{cm}^{-1},
\end{equation*}  
with a specific heat capacity of    
\begin{equation*}
    C_v = 0.3 \,\text{GJ/keV/cm}^3.
\end{equation*}  
The initial temperature is in equilibrium, given by $T_{r,0} = T_{m,0} = 10^{-3} \,\text{keV} $. A reflective boundary condition is imposed on the lower boundary. The left boundary is maintained with an angularly isotropic specific intensity corresponding to a $ 0.3 \,\text{keV} $ black body source. The upper and right boundaries are fixed at a specific intensity described by a Planckian distribution with temperature $ 10^{-3} \,\text{keV} $.  The time step is set to $\Delta t = 0.0025 \, \text{ns}$ ($  \text{CFL} \approx 12 $). A total of 6, 000, 000 particles is employed per time step. In particular, we use the arithmetic average to evaluate $\sigma_{ij}$ at the material interface, instead of using the harmonic average given in \eqref{eq_eva_sigma}. 

In \autoref{eq_holo}, we present the radiation and material temperatures at time $t = 10.0  \, \text{ns}$  obtained from the IMC and EMC solutions. We observe that the central block is heated non-uniformly, and the EMC solution is free from ray effects. The comparisons of material temperature along the diagnostic lines $y = 0.45 \, \text{cm}  $ and $y = 0.65 \, \text{cm} $ are shown in \autoref{eq_holo_cut}, where the IMC and EMC results are generally consistent with each other. As reported in \autoref{tabel_Pb4}, although the EMC solution is noisier, it requires only 17,687 seconds of CPU time to reach $t = 10.0 , \text{ns}$, whereas IMC takes 96,332 seconds, making EMC about five times faster in this case.

\begin{figure}[!htb]
   \centering
   \begin{subfigure}{0.43\textwidth}
       \centering
       \includegraphics[width=\textwidth]{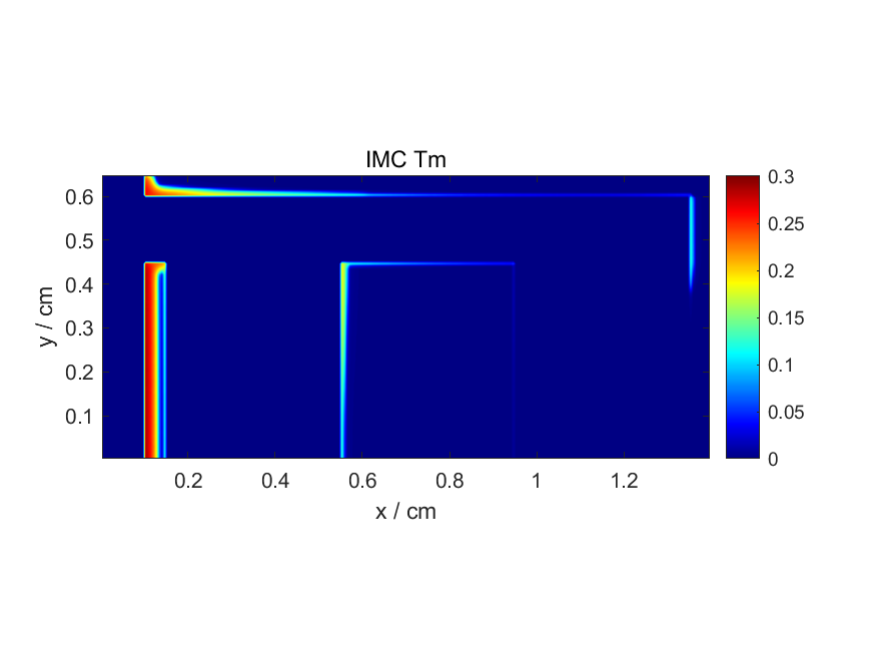}
       \subcaption{Material temperature  using  the IMC method.}
   \end{subfigure}
   \hfill
   \begin{subfigure}{0.43\textwidth}
       \centering
       \includegraphics[width=\textwidth]{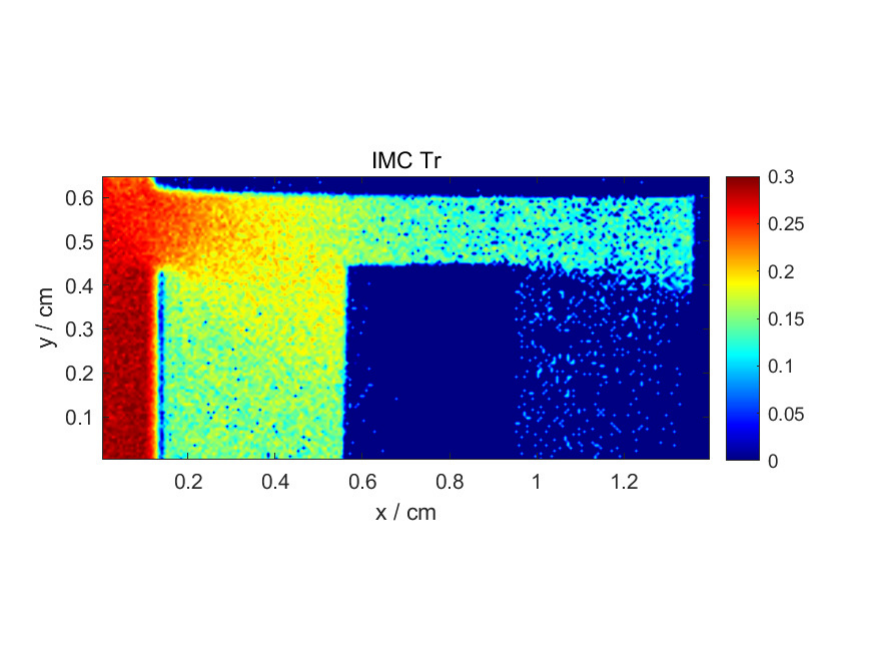}
       \subcaption{Radiation temperature using  the IMC method.}
   \end{subfigure}

   \vspace{-0.1cm} 

   \begin{subfigure}{0.43\textwidth}
       \centering
       \includegraphics[width=\textwidth]{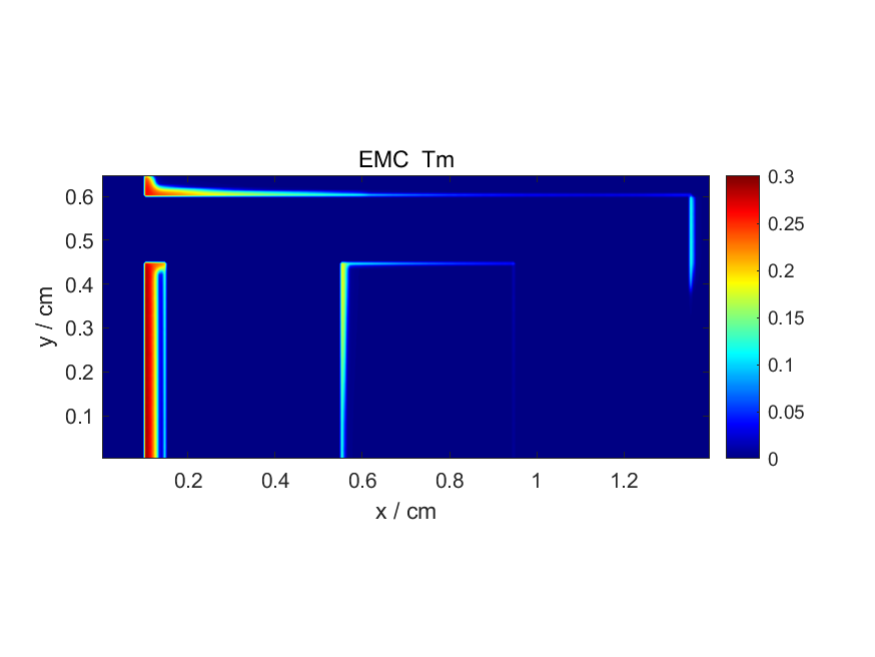}
       \subcaption{Material temperature using the EMC method.}
   \end{subfigure}
   \hfill
   \begin{subfigure}{0.43\textwidth}
       \centering
       \includegraphics[width=\textwidth]{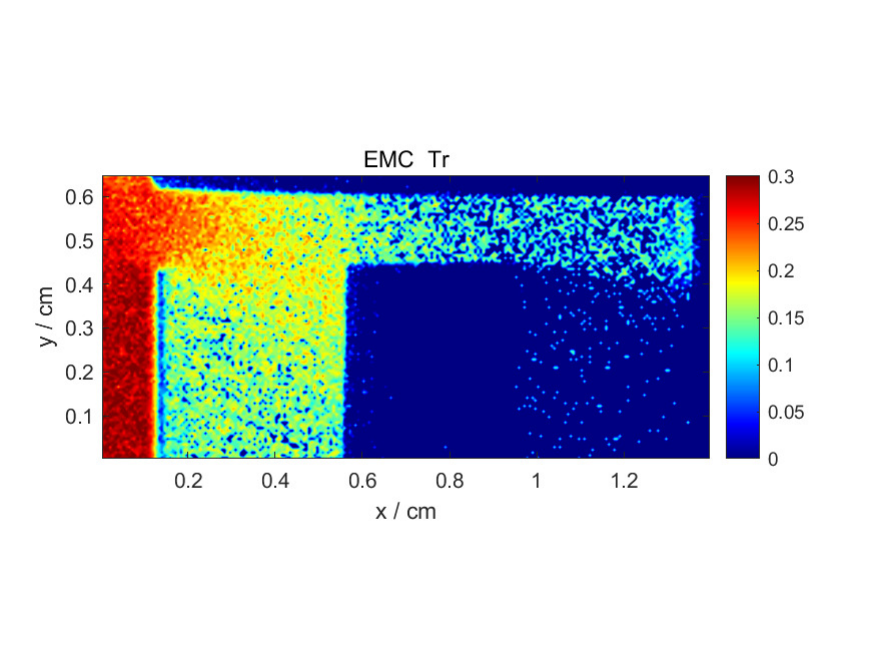}
       \subcaption{Radiation temperature using the EMC method.}
   \end{subfigure}

   \caption{Comparisons of the material and radiation temperatures using the EMC and IMC methods at $t = 10.0\,\text{ns} $, with  $\Delta t = 0.0025\,\text{ns}$ ($  \text{CFL} \approx 12 $) for the frequency-dependent hohlraum problem.}
   \label{eq_holo}
\end{figure}

\begin{figure}[!htb]
   \centering
   \begin{subfigure}{0.45\textwidth}
       \centering
       \includegraphics[width=\textwidth]{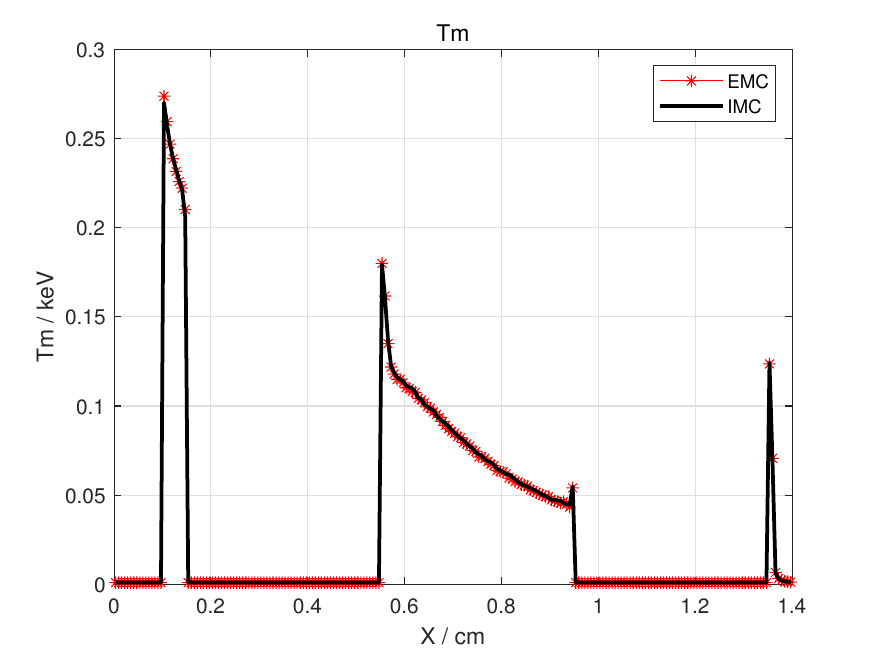}
       \subcaption{Material temperature along the line $y = 0.45\,\text{cm}$.}
   \end{subfigure}
   \hfill
   \begin{subfigure}{0.45\textwidth}
       \centering
       \includegraphics[width=\textwidth]{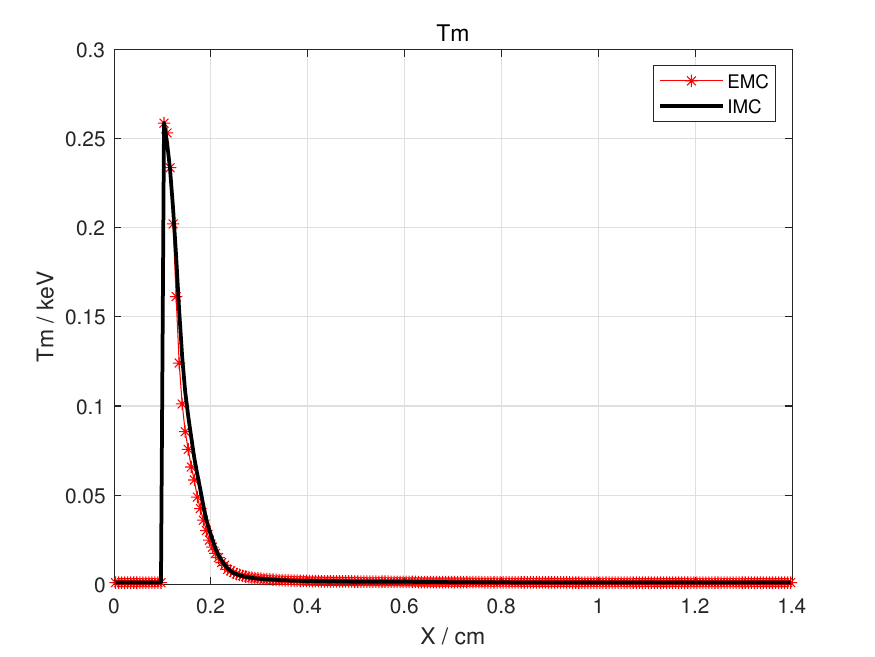}
       \subcaption{Material temperature along the line $y = 0.65\,\text{cm}$.}
   \end{subfigure}
   \caption{Comparisons of the material temperature using the EMC and IMC methods at $t = 10.0\,\text{ns} $ for the frequency-dependent hohlraum problem.}
      \label{eq_holo_cut}
\end{figure}

\begin{table}[!htb]
	\centering
	\begin{tabular}{lrr}
		\toprule
		\textbf{Example \ref{Pb.4}} & \textbf{EMC (s)} & \textbf{IMC (s)} \\
		\midrule
		Hohlraum  Pb. &  17687 & 96332 \\
		\bottomrule
	\end{tabular}
		\caption{Comparison of CPU time using the EMC and IMC methods for Example \ref{Pb.4} (in seconds).}
	\label{tabel_Pb4}
\end{table}

\section{Conclusions and outlook}
\label{sec:conclusion}

In this work, we develop an efficient AP MC method for frequency-dependent radiative transfer equations. By combining a multi-group frequency discretization with characteristic-based flux construction, we derive a micro-macro system which couples a low dimension convection-diffusion-type equation for macroscopic quantities with a high dimension transport equation for radiative intensity. This formulation enables the use of large time steps independent of the speed of light. A hybrid finite volume scheme is employed to efficiently solve the macroscopic system, while a Picard iteration with a predictor-corrector strategy effectively manages the high-dimensional nonlinear coupling across both spatial and frequency dimensions. The resulting transport problem reduces to a tractable absorption-only system, which is solved using a particle-based MC method. The scheme has been formally proved to be AP. Numerical results confirm substantial efficiency gains over the IMC method, especially in optically thick regime. 

While our new approach requires much less CPU time, the absence of effective scattering leads to considerably higher noise compared to IMC under the same settings. Variance reduction techniques for the proposed method will be explored in future work.


\section*{Acknowledgements}
We would like to thank Weiming Li and Chang Liu (Institute of Applied Physics and Computational Mathematics) and Xiaojiang Zhang  (Central South University) for helpful discussions.


\appendix
\section{Newton iteration for the correction step}  \label{appendix_A}
	In the correction step \eqref{eq_corretion}, we are required to solve the nonlinear equation $\mathcal{F}(T^{n+1,k+1}_i) = 0$ for each cell, where the function $\mathcal{F}(T^{n+1,k+1}_i)$ is defined as
\begin{equation}
	\begin{aligned}
		\mathcal{F}(T^{n+1,k+1}_i) := T_i^{n+1,k+1} &+ \frac{ a}{C_{v,i}} \sum_{g=1}^G \chi_{g,i}^{n+1,k+\half} b_{g,i}^{n+1,k+1}  (T_i^{n+1,k+1})^4 -A_i,
	\end{aligned}
\end{equation}
	with
\begin{equation}
	\begin{aligned}
		A_i  =&   T_i^n + \frac{\Delta t}{C_{v,i}} \sum_{g=1}^G \chi_{g,i}^{n+1, k+ \half}\left( \frac{1}{c \Delta t} \rho_{g,i}^n- \frac{1}{\Delta V_i}  \sum_{j \in \mathcal{N}_i} 
		F_{g,ij}^{n+1,k+\half*} \right).
	\end{aligned}
\end{equation}
The derivative of $\mathcal{F}(T^{n+1,k+1}_i)$ is given by 
\begin{equation}  
            \mathcal{F}^{'}(T^{n+1,k+1}_i) = 1 + \frac{ 4a}{C_{v,i}} \sum_{g=1}^G \chi_{g,i}^{n+1,k+\half} \Big( b_g+\frac{T}{4}\frac{\partial b_g}{\partial T} \Big)_{i}^{n+1,k+1}   (T_{i}^{n+1,k+1})^3.
\end{equation}
To solve for $T^{n+1,k+1}_i$, we employ the Newton iteration method:
\begin{equation}
    T^{n+1,k+1,s+1}_i =  T^{n+1,k+1,s}_i - \frac{\mathcal{F}(T^{n+1,k+1,s}_i)}{\mathcal{F}^{'}(T^{n+1,k+1,s}_i)},
\end{equation}
where the initial guess $T_i^{n+1,k+1,0}$ is taken from $T_i^{n+1,k+\frac{1}{2}}$ obtained in the prediction step.

\section{Proof of Proposition \ref{eq_prpo1} }

\begin{proof}
    Consider the systems
           \begin{subequations}
		\begin{align}
        \label{eq_asymptoticAnalysis1A}
            &\begin{aligned}
                \frac{\varepsilon^2}{c} \frac{\partial I_g}{\partial t}+\varepsilon \vOmega \cdot \nabla I_g = \sigma_{g}B_g-\sigma_{g}I_g,  \quad g=1,\ldots,G, 
            \end{aligned}
            \\
            \label{eq_asymptoticAnalysis1B}
		  &\begin{aligned}
                \varepsilon^2 C_v \frac{\partial T}{\partial t}=\sum_{g=1}^{G} \sigma_{g} \left( \rho_g- 4 \pi B_g  \right). 
            \end{aligned}
		\end{align}
   \end{subequations}
   We now perform a Chapman-Enskog expansion and compare terms that are the same order in $\varepsilon$.
   
    The $O(1)$ equation for \eqref{eq_asymptoticAnalysis1A} is
    \begin{equation}\label{eq_asymptoticAnalysis1_O1A}
        I_g^{(0)} = B_g^{(0)},  \quad g=1,\ldots,G, 
    \end{equation}
    integrating \eqref{eq_asymptoticAnalysis1_O1A} over the angular variables, we have 
    \begin{equation}\label{eq_asymptoticAnalysis1_O1B}
        \rho_g^{(0)} = 4 \pi B_g^{(0)},  \quad g=1,\ldots,G. 
    \end{equation}
    The $O(\varepsilon)$ equation for \eqref{eq_asymptoticAnalysis1A} is
    \begin{equation}\label{eq_asymptoticAnalysis1_O2A}
        \vOmega \cdot \nabla I_g^{(0)}  + \sigma_{g}I_g^{(1)} = \sigma_{g} B_g^{(1)} ,  \quad g=1,\ldots,G, 
    \end{equation}
    substituting \eqref{eq_asymptoticAnalysis1_O1A} into \eqref{eq_asymptoticAnalysis1_O2A}, we can get
    \begin{equation}\label{eq_asymptoticAnalysis1_O2B}
        \begin{aligned}
                I_g^{(1)} = - \frac{1}{ \sigma_g} \vOmega \cdot \nabla B_g^{(0)}+  B_g^{(1)},  \quad g=1,\ldots,G. 
        \end{aligned}
    \end{equation}
    The $O(\varepsilon^2)$ equation for \eqref{eq_asymptoticAnalysis1A} is
    \begin{equation}\label{eq_asymptoticAnalysis1_O3A}
        \frac{1}{c} \frac{\partial I_g^{(0)}}{\partial t}+\vOmega \cdot \nabla I_g^{(1)}  = \sigma_{g} B_g^{(2)} -\sigma_{g}I_g^{(2)},  \quad g=1,\ldots,G, 
    \end{equation}
        The $O(\varepsilon^2)$ equation for \eqref{eq_asymptoticAnalysis1B} is
    \begin{equation}\label{eq_asymptoticAnalysis1_O3B}
        C_v \frac{\partial }{\partial t}T^{(0)}=\sum_{g=1}^{G} \sigma_{g} \left( \rho_g^{(2)}- 4 \pi B_g^{(2)}  \right). 
    \end{equation}
    Integrating \eqref{eq_asymptoticAnalysis1_O3A} over the angular variables, adding up all groups and using \eqref{eq_asymptoticAnalysis1_O3B}, we have
    \begin{equation}\label{eq_asymptoticAnalysis1_O4}
        \frac{1}{c} \frac{\partial }{\partial t} \left( \sum_{g=1}^{G} \rho_g^{(0)} \right) + C_v \frac{\partial }{\partial t}T^{(0)}  
        = - \sum_{g=1}^{G} \nabla \cdot \int_{4 \pi} \vOmega I_g^{(1)} \dd{\vOmega}.
    \end{equation}
    Plugging \eqref{eq_asymptoticAnalysis1_O2B} into the equation \eqref{eq_asymptoticAnalysis1_O4}, using the condition \eqref{eq_asymptoticAnalysis1_O1B}, then \eqref{eq_asymptoticAnalysis1_O4} reduces to
        \begin{equation}
        \frac{1}{c} \frac{\partial }{\partial t} \left( 4 \pi \sum_{g=1}^{G} B_g^{(0)} \right) + C_v \frac{\partial }{\partial t}T^{(0)}  
        = \sum_{g=1}^{G} \nabla \cdot \left( \frac{4 \pi}{3 \sigma_g} \nabla B_g^{(0)} \right),
    \end{equation}
    which implies that (by chain rules)
    \begin{equation}\label{eq_asymptoticAnalysis1_O5}
        \frac{1}{c} \frac{\partial }{\partial t} \left( 4 \pi \sum_{g=1}^{G} B_g^{(0)} \right) + C_v \frac{\partial }{\partial t}T^{(0)}  
        = \sum_{g=1}^{G} \nabla \cdot \left( \frac{ac}{3 \sigma_g} \pdrv{B_g^{(0)}}{T} \frac{4 \pi}{4ac (T^{(0)})^3} \nabla  (T^{(0)})^4 \right).
    \end{equation}
   Using the following relations
    \begin{equation*}
        4 \pi \sum_{g=1}^{G} B_g  = ac T^4, \quad 4 \pi \sum_{g=1}^{G} \pdrv{B_g}{T}   = 4 ac T^3,
    \end{equation*}
    \eqref{eq_asymptoticAnalysis1_O5} reduces to
    \begin{equation}
        a \frac{\partial }{\partial t} (T^{(0)})^4 + C_v \frac{\partial }{\partial t}T^{(0)}  
        =  \nabla \cdot \left( \frac{ac}{3 \hat{\sigma}_R}  \nabla  (T^{(0)})^4 \right),
    \end{equation}
    where the mean opacity $\hat{\sigma}_R$ is defined by 
        \begin{equation*}
        \frac{1}{\hat{\sigma}_R} 
    = \frac{1}{ \sum_{g=1}^{G} \pdrv{B_g^{(0)}}{T} }  \left( \sum_{g=1}^{G} \frac{1}{  \sigma_g }  \pdrv{B_g^{(0)}}{T}   \right) .
    \end{equation*}
    When one approximates $\sigma_g$ as in \eqref{eq_Approximation_sigma}, i.e.,
    \begin{equation*}
        \sigma_g = \frac{1}{\nu_{g+\half}-\nu_{g-\half}} \int_{\nu_{g-\half}}^{\nu_{g+\half}} \sigma  \dd{\nu},
    \end{equation*}
    the approximated mean opacity $\hat{\sigma}_R $ of the above equation is determined by
    \begin{equation}
        \frac{1}{\hat{\sigma}_R} 
      = \frac{1}{ \sum_{g=1}^{G} \int_{\nu_{g-\half}}^{\nu_{g+\half}} \pdrv{B(\nu,T^{(0)})}{T}  \dd{\nu} }  \left( \sum_{g=1}^{G} \frac{1}{\frac{1}{\nu_{g+\half}-\nu_{g-\half}} \int_{\nu_{g-\half}}^{\nu_{g+\half}} \sigma  \dd{\nu} } \int_{\nu_{g-\half}}^{\nu_{g+\half}} \pdrv{B(\nu,T^{(0)})}{T}  \dd{\nu} \right),
    \end{equation}
    which is indeed a reasonable approximation for \eqref{eq_def_Rosseland}.
\end{proof}

\bibliographystyle{elsarticle-num}
\bibliography{src/references}

\end{document}